\documentclass[preprint, noshowframe]{imsart}

\RequirePackage[numbers]{natbib}
\RequirePackage[colorlinks,citecolor=blue,urlcolor=blue]{hyperref}
\RequirePackage{graphicx}

\usepackage[utf8]{inputenc}  

\usepackage{nicefrac}
\usepackage{amsthm}
\usepackage{amsmath,amssymb}
\usepackage{dsfont}
\usepackage{array}
\usepackage{booktabs}
\usepackage{graphicx}
\usepackage{bbm}   
\usepackage{comment}
\usepackage{url}
\usepackage{nameref}
\usepackage{ifthen}
\usepackage{natbib}
\usepackage{color}
\usepackage{epstopdf}
\usepackage{caption}
\usepackage{subcaption}
\usepackage{arydshln}
\usepackage{longtable}
\usepackage{marvosym}
\usepackage{cases}
\usepackage{enumitem}
\usepackage{easy-todo}

\startlocaldefs

\newtheorem{theorem}{Theorem}
\newtheorem{lemma}{Lemma}

\newtheorem{corollary}[theorem]{Corollary}
\newtheorem{assumption}{Assumption}
\newtheorem{remark}[lemma]{Remark}

\newtheorem{example}[lemma]{Example}


\newcommand{\mc}{\mathcal}
\newcommand{\dx}{\mathrm{d}}
\newcommand{\dd}{\mathrm{d}}
\newcommand{\e}{\varepsilon}
\newcommand{\bs}[1]{\boldsymbol{#1}} 
\newcommand{\Quad}{\qquad \quad} 
\newcommand{\QQuad}{\qquad \qquad}

\newcommand{\Tinv}{T^{\mathrm{inv}}}

\newcommand{\expec}{{\mathbb{E}}}
\newcommand{\prob}{{\mathbb{P}}}

\newcommand{\ind}{\text{\textbf{1}}} 

\newcommand{\one}{\mathbbm{1}}

\newcommand{\floor}[1]{\left\lfloor#1\right\rfloor}

\newcommand{\abs}[1]{\lvert #1 \rvert}
\newcommand{\KL}{\operatorname{KL}}

\newcommand{\card}{\operatorname{card}}



\newcommand{\Oop}[1]{{\operatorname{\mathcal{O}}\left(#1\right)}}

\newcommand{\cO}{\ensuremath{\mathcal{O}}} 


\newcommand{\norm}[1]{{\lVert#1\rVert}} 
\newcommand{\normb}[1]{{\big\lVert#1\big\rVert}}


\newcommand{\skpb}[2]{{\big\langle#1,#2\big\rangle}}  

\newcommand{\absb}[1]{\big|#1\big|}

\newcommand{\abss}[1]{\bigg|#1\bigg|}
\newcommand{\Abss}[1]{\Bigg|#1\Bigg|}


\newcommand{\R}{{\mathbb{R}}}
\newcommand{\N}{{\mathbb{N}}}


\newcommand*{\defeq}{\mathrel{\vcenter{\baselineskip0.5ex \lineskiplimit0pt
			\hbox{\scriptsize.}\hbox{\scriptsize.}}}%
	=}
\newcommand*{\defeql}{ = \mathrel{\vcenter{\baselineskip0.5ex   	\lineskiplimit0pt
			\hbox{\scriptsize.}\hbox{\scriptsize.}}}%
}





\newcommand{\wjb}[2]{w_{\bs j}(\bs {#1}; {#2})}  

\newcommand{\wjsb}[3]{w_{\bs j}^{(\bs{#3})}(\bs {#1}; {#2})} 
\newcommand{\wksb}[3]{w_{\bs k}^{(\bs{#3})}(\bs {#1}; {#2})}


\newcommand{\Zi}[2]{\partial^{\bs{#1}}Z_i(\bs {#2})}

\newcommand{\s}{^{(\bs{s})}}
\newcommand{\dels}{\partial^{\bs s}}


\newcommand{\muestsb}[2]{\hat\mu_{n}^{(\bs s)}({\bs #1};{#2})}

\newcommand{\fkmin}{{f_k}_{\min}}
\newcommand{\fkmax}{{f_k}_{\max}}

\newcommand{\Cmax}{C_1}
\newcommand{\Clip}{C_2}
\newcommand{\Ccard}{C_{\mathrm{d}}}
\newcommand{\Csum}{C_3}

\newcommand{\Cenv}{C_{\mathrm{e}}}


\newcommand{\E}{{\mathbb{E}}}

\newcommand{\hclass}{\mathcal{H}(\alpha,L)}

\newcommand{\Pclass}{\mc P(\beta, C_Z)}

\newcommand{\oY}[1]{\bar{Y}_{#1}}

\DeclareMathOperator*{\argmin}{argmin}

\endlocaldefs

\begin{document}

\begin{frontmatter}
\title{Smooth and rough paths in mean derivative estimation for functional data}
\runtitle{Smooth and rough paths in mean derivative estimation}

\begin{aug}
\author[A]{\fnms{Max}~\snm{Berger}\ead[label=e1]{mberger@mathematik.uni-marburg.de}}
\author[A]{\fnms{Hajo}~\snm{Holzmann}\ead[label=e2]{holzmann@mathematik.uni-marburg.de}}
\address[A]{Philipps-University, Marburg, Germany\printead[presep={,\ }]{e1,e2}}
\end{aug}

\begin{abstract}
In this paper, in a multivariate setting we derive near optimal rates of convergence in the minimax sense for estimating partial derivatives of the mean function for functional data observed under a fixed synchronous design over Hölder smoothness classes. 
In contrast to mean function estimation, for derivative estimation the smoothness of the paths of the processes is crucial.  For processes with rough paths of lower-order smoothness than the order of the partial derivative to be estimated, we determine a novel, slower than parametric optimal rate of convergence. For processes with paths of higher-order smoothness  we show that the parametric $\sqrt n$ rate can still be achieved under sufficiently dense design. 
We conduct our analysis in  the supremum norm since it corresponds to the visualisation of the estimation error. Further, as a basis for  the construction of  uniform confidence bands we also derive a central limit theorem in the space of continuous functions equipped with the sup-norm.

Derivative estimation is of quite some interest in functional data analysis, e.g.~to assess the dynamics of the underlying processes. We implement a multivariate local polynomial derivative estimator and illustrate its finite-sample performance in a simulation as well as for two real-data sets. To determine the smoothness of the sample paths in the applications we further discuss a method based on comparing restricted estimates of the partial derivatives of the covariance kernel.  
\end{abstract}

\begin{keyword}
\kwd{central limit theorem}
\kwd{derivative estimation}
\kwd{functional data}
\kwd{optimal rates of convergence}
\kwd{smoothness of sample paths}
\kwd{supremum norm}
\kwd{synchronously sample data}
\end{keyword}

\end{frontmatter}



\section{Introduction}

Estimating derivatives is frequently of interest in functional data analysis (FDA). In their influential book, as motivating examples  \citet{ramsay1998functional} discuss estimation of acceleration (second derivatives) of height curves of children or estimation of the yearly temperature forcing function defined in terms of first and third derivatives, thus stressing the importance of derivative estimation in FDA. 
\citet{wang2016functional} point to the role of derivative estimation for assessing the dynamics of the underlying processes. 

We show in this paper the novel feature that surprisingly, rates of convergence for derivative estimation depend on the smoothness of the paths of the processes. In particular, for processes with rough paths of lower-order smoothness than the order of the partial derivative to be estimated, we determine a novel, slower than parametric optimal rate of convergence. To assess the smoothness of the sample paths in applications we further introduce a method based on comparing restricted estimates of the partial derivatives of the covariance kernel. As it turns out, rough, non-differentiable sample paths seem not to be an exception in observed functional data. 

While estimating the mean function and the covariance kernel themselves  has been intensely investigated in the literature on FDA 
 \citep{yao2005functional, li2010uniform, cai2011optimal, degras2011simultaneous, zhang2016sparse, xiao2020asymptotic, wang2020simultaneous, berger2023dense, mohammadi2024functional}, there is much less methodology and theory available for derivative estimation.  
For asynchronous random design, \citet{sharghi2021mean} propose weighted local linear estimators for the first derivative of the mean function, and obtain rates of convergence slower than $1/\sqrt n$. Similar results are obtained also for the mixed partial derivatives of the covariance kernel and the principle component basis functions in \citet{dai2018derivative}. For synchronous, fixed design \citet{benhenni2014local} consider local polynomial estimators for general derivatives and derive pointwise rates of convergence. 
 \citet{cao2014simultaneous} shows that for smooth processes, in a fixed synchronous design with sufficiently many design points spline estimators may achieve the parametric $\sqrt n$-rate of convergence even under the supremum norm.

In this paper, in a multivariate setting we derive near optimal rates of convergence in the minimax sense for estimating partial derivatives of the mean function for functional data observed under a fixed synchronous design over Hölder smoothness classes. We focus on the supremum norm since it corresponds to the visualisation of the estimation error, and is closely related to the construction of  uniform confidence bands. Similarly to results in \citet{berger2023dense} for the mean function, if the number of design points in each row is too low a discretization term dominates, and an intermediate regime occurs in the rates under the supremum norm in which the observational errors dominate. 

However, in contrast to mean function estimation, for derivative estimation the smoothness of the paths of the processes is crucial for the rates of convergence. On the one hand, if the paths of the processes have higher-order smoothness than the order of the partial derivative to be estimated, the parametric $\sqrt n$ rate can be achieved under sufficiently dense design. On the other hand, for processes with rough paths of lower-order smoothness, we show that the rates of convergence are necessarily slower than the parametric rate, and we determine a near-optimal rate at which estimation is still possible.

The paper is organized as follows. In Section \ref{sec:mainresults}, we start by formally stating the multivariate model and by defining the Hölder-smoothness classes of functions and processes that we will be working with. Section \ref{sec:optimalrates} contains the main results: We provide upper bounds for estimating partial derivatives in FDA using linear nonparametric estimators, which we complement with lower bounds which almost match the upper bounds. We also show asymptotic normality of the derivative estimator under the $\sqrt n$-regime, which requires smooth paths for the processes. Based on the restricted polynomial estimators of the covariance kernel proposed in \citet{berger2024optimal}, in Section \ref{sec:covdersmoothpaths} we further propose a method for assessing the smoothness of the covariance kernel on the diagonal, which is a necessary condition for smoothness of the paths of the processes. In Section \ref{sec:local_polynomial_estimator} we discuss multivariate local-polynomial derivative estimators and  show that the resulting weights fulfil the requirements needed for the upper bounds in Section \ref{sec:optimalrates}. 
Section \ref{sec:sim} contains a simulation study, which is contained in the \href{https://github.com/mbrgr/smooth-rough-paths-fda-derivative-estimation}{GitHub Repository: smooth-rough-paths-fda-derivative-estimation}. Apart from investigating the effects of the choice of the bandwidth and of the number of design points, we illustrate how smooth and rough processes influence the error in  derivative estimation. In Section \ref{sec:real_data_application} we revisit two data examples from \citet{berger2023dense} and complement the analysis by estimating first derivatives.  Section \ref{sec:conclude} concludes, while the main proofs are gathered in Section \ref{sec:proof:theorem:rates}. The Appendix contains further technical arguments in Section \ref{app:additional_proofs} as well as additional simulation results in Section \ref{sec:simfullint}. 

Let us conclude the introduction by summarizing notation used in the reminder of the paper. The dimension is denoted by $d \in \N$. 
For $ \bs a=(a_1,\ldots,a_d)^\top$ and $ \bs b=(b_1,\ldots,b_d)^\top \in \R^d$ we write $\bs a\leq \bs b$ if $a_i\leq b_i$ for $i=1,\ldots,d$. We set  $\bs{a}^{\bs{b}} \defeq a_1^{b_1}\cdots a_d^{b_d}$,  $|\bs a|=a_1+\ldots+a_d$,  $\bs a!\defeq a_1!\cdots a_d!$, $ \bs a_{\min} = \min_{1\leq r \leq d} a_r$. If the coordinates of $\bs a$ are non-negative integers we denote the partial derivative operator by $\partial^{\bs{a}} \defeq \partial_1^{a_1}\ldots\partial_d^{a_d}$. Set $\bs 1\defeq (1,\ldots,1)^\top \in \N^d$. Given $\bs p \defeq (p_1,\ldots,p_d)^\top$ we denote by $\{\bs j \in \N^d \mid \bs 1 \le \bs j \le \bs p\}$ the set of $\bs j \in N^d$ that are component- wise between $\bs 1$ and $\bs p$, and denote $\sum_{\bs j=\bs 1}^{\bs p}\defeq \sum_{j_1=1}^{p_1} \cdots \sum_{j_d=1}^{p_d}$. If for a vector the indices are written in bold letters this indexes vectors such as $\bs{p_j}\defeq (p_{j_1},\ldots,p_{j_d})^\top$. Scalars indexed by vectors and scalars are written as  $\e_{i,\bs j} \defeq \e_{i,j_{1},\ldots,j_{d}}$ or $ Y_{i,\bs j} \defeq Y_{i,j_{1},\ldots,j_{d}}$.   
The supremum norm (sup-norm) on $[0,1]^d$  is denoted by $\norm{\,\cdot\,}_\infty$.

\section{Optimal rates of convergence for derivative estimation}\label{sec:mainresults}

Let us start by introducing the model: The observed data $(Y_{i,  \bs j}, \bs{x_{ j}}), i = 1, \ldots, n, \bs j = \bs 1, \ldots, \bs p$ are distributed according to  
\begin{align}
	Y_{i,  \bs j}= \mu( \bs{x_{ j}}) + Z_i( \bs{x_{ j}}) + \e_{i, \bs j} \,, \quad  i=1,\dotsc,n\,, \bs j = \bs 1,\ldots,\bs p  \,. \label{eq:model}
\end{align}
Here, $Y_{i, \bs j}$ are real-valued response variables and the $\bs{x_j} \in [0,1]^d$ are fixed, known design points which are synchronous over the repetitions $i$. The $Z_1,\dotsc,Z_n$ are independent and identically distributed centered processes with continuous paths, and the observations errors $\e_{i,\bs j}$ are independent with mean zero and also independent of the $Z_i$. Thus $\mu$ is the deterministic mean function.

Our main results, Corollary \ref{cor:rateconvup} and Theorem \ref{thm:lower:bound:mean:derivatives}, imply in particular that if the smoothness of the mean function $\mu$ is $\alpha>0$ and of the paths of the processes $Z_i$ it is $\beta>0$, if the $\bs s$-partial derivative of $\mu$ is to be estimated, and if $\beta < |\bs s | < \alpha$, then $\mu$ can be estimated no faster than with rate   
$$n^{- \frac{\alpha -\abs{\bs s}}{2(\alpha - \beta )}}$$
which appears to be a novel rate, slower than the parametric rate $n^{-1/2}$. 

The section is now organized as follows. In Section \ref{sec:smoothnessclass} we introduce the smoothness classes of functions and processes which we work with. Section \ref{sec:optimalrates} contains our main results: Near optimal rates of convergence for derivative estimation, together with a central limit theorem. While the upper bounds are obtained for generic linear estimators with weights satisfying certain conditions, Section \ref{sec:local_polynomial_estimator} discusses local polynomial weights for multivariate derivative estimation. In Section \ref{sec:covdersmoothpaths} we describe a method for assessing the smoothness of the covariance kernel on the diagonal, which is a necessary condition for smoothness of the paths of the processes.

\subsection{Smoothness classes of functions and processes}\label{sec:smoothnessclass}

For an open, bounded and convex set $U\subset \R^d$ and an integer $m\geq 1$ we define 
\begin{align}
	C^{m}(\overline{U})  \defeq \big\{\,&f\colon U \to \R \mid f\text{ is } m\text{ times continuously differentiable in }U \text{ and for}\\
	& \abs{\bs s} \leq m \text{ the partial derivative } \partial^{\abs{\bs s}} f \text{ can be continuously extended to } \overline{U}\, \big\} \nonumber
\end{align}
	Given $\alpha >0$ we set $\left\lfloor \alpha \right\rfloor=\max\{  k\in\N_0 \mid k<\alpha \}.$  
	A function $f\colon \overline{U} \to \R$ is \textit{H\"older-smooth} with index $\alpha$ if for all indices $\bs \beta=(\beta_1,\ldots,\beta_d)^\top$ with $|\bs \beta|\leq \left\lfloor \alpha \right\rfloor$ the partial derivatives $\partial^{\bs \beta} f(\bs x)$ exist 
	and the H\"older-norm given by
	$$\norm{f}_{\mc H, \alpha}\defeq\max_{|\bs \beta| \leq \left\lfloor \alpha \right\rfloor } \sup_{\bs x \in \overline{U}} |\partial^{\bs\beta} f(\bs x)|+ \max_{|\bs \beta|=\left\lfloor \alpha \right\rfloor}\sup_{\bs x,\bs y \in \overline{U}, \,\bs x\neq \bs y} \frac{|\partial^{\bs \beta} f(\bs x)-\partial^{\bs \beta} f(\bs y)|}{\norm{\bs x-\bs y}_\infty^{\alpha-\left\lfloor \alpha \right\rfloor}}$$
	is finite.  
	The H\"older class on $\overline{U}$ with parameters $\alpha>0$ and $L>0$ is defined by
	\begin{equation}\label{def:hoelder:class}
		\mc H_U(\alpha, L) = \mc H(\alpha, L) = \big\{f \in C^{\lfloor \alpha \rfloor}(\overline{U}) \mid \norm{f}_{\mc H, \alpha} \leq L \big\}.
	\end{equation}
 Note that all partial derivatives of functions $f \in \mc H_U(\alpha, L)$ of order $< \lfloor \alpha \rfloor$ are Lipschitz-continuous with constant bounded by $L$. 

In the framework of our model \eqref{eq:model}, $U=(0,1)^d$ and $\overline{U}=[0,1]^d$. Apart for the mean function $\mu$, we also need to assume some smoothness of the paths of the processes $Z_i$. For $\beta >0$ consider processes $Z\colon[0,1]^d\to \R$ for which partial derivatives of order $\leq \lfloor \beta \rfloor$ exist almost surely, and for which the Hölder norm $\norm{Z}_{\mc H, \beta} $ is almost surely bounded by a square integrable random variable $M$: $\norm{Z}_{\mc H, \beta} \leq M$ a.s.~with $\expec [M^2] < \infty $. Thus for $\beta >0$ and $C_Z>0$ we consider the class of processes 
	\begin{align}
		\mc P(\beta, C_Z) = \mc P(\beta) & = \big\{ Z\colon[0,1]^d\to \R \text{ process} \mid \text{differentiable paths up to order} \leq \lfloor \beta \rfloor, \ \nonumber\\
		&  \exists \, \text{random variable } M \text{ s.th. } \norm{Z}_{\mc H, \beta} \leq M \text{ a.s. and } \ \expec [M^2] \leq C_Z \big\} \label{eq:classprocesses} 
	\end{align}	
Thus processes $Z \in \mc P(\beta, C_Z) $ have Lipschitz-continuous derivatives of order $<  \lfloor \beta \rfloor$, and Hölder-continuous derivatives of orders $= \lfloor \beta \rfloor$ with exponent $\beta -  \lfloor \beta \rfloor$ a.s., and moreover, all random  Lipschitz - and Hölder constants can be upper-bounded by a square-integrable random variable $M$ with $\expec[M^2] \leq C_Z$. 

\begin{example}[Fractional Brownian motion]
    Fractional Brownian motion (fBm) $(B_t^H)_{t \in [0,1]}$ with Hurst parameter $H \in (0,1)$ is a centered Gaussian process with covariance function $\expec[B_t^H B_s^H] = (t^{2H} + s^{2H} - |t-s|^{2H})/2$ \citep{shevchenko2014fractional}. Since the variogram of fBm is $\expec[(B_t^H- B_s^H)^2] = |t-s|^{2H}$, Hölder continuity of the paths of any order $H-\epsilon$, $0 < \epsilon < H$ follows from the Kolmogorov-Chentsov  theorem, where integrability of the Hölder-constant is shown e.g.~in \citet[Theorem 1]{azmoodeh2014necessary}. Smooth processes can be obtained by taking iterated integrals of the fBm. 

    A related example is the Riemann-Liouville version of $\beta$-fractional Brownian motion. It can be directly defined for all $\beta >0$ by 
    \begin{equation}\label{eq:RLfB1}
    R_t^\beta =  \int_0^t (t-s)^{\beta-1/2}\,  \dx W_s,
    \end{equation}
    where $(W_s)$ is a standard Brownian motion. See \citet[Section 10]{van2008reproducing}. In the proof of Theorem \ref{thm:lower:bound:mean:derivatives} in Section \ref{sec:prooflowerbound} we discuss sample path smoothness of this class of processes. 

\end{example}

	%

\subsection{Optimal rates of convergence and asymptotic normality}\label{sec:optimalrates}

 Let us make precise the design assumptions that we require in model \eqref{eq:model}: the design points $\bs x_{\bs j}=(x_{1,j_1}, \ldots, x_{d,j_d})$, $1 \leq j_k \leq p_k$, $k=1, \ldots, d$, and $x_{k,l} < x_{k,l+1}$,  $1 \leq l \leq p_k - 1$ have  Cartesian product  structure. Set $\bs p(n)= \bs p  \defeq (p_1,\ldots,p_d)^\top$. The number $\bs p^{\bs 1} =\bs p^{\bs 1}(n) =\prod_{k=1}^d p_k(n)$ of design points and the design points $\bs x_{\bs j}$ will depend on $n$, which is suppressed in the notation. 
\begin{assumption}[Regular design]\label{ass:design:distribution} 
	There is a constant $\Ccard >0$ such that for each $\bs x \in [0,1]^d$ and $h>0$ we have that 
	$$\card \big\{\bs j \in \{\bs 1,\ldots, \bs p\} \mid \bs x - (h, \ldots, h) \leq \bs{x_j} \leq \bs x + (h, \ldots, h)\big\}  \leq \Ccard\, h^d\,\bs p^{\bs 1}. $$ 
\end{assumption}



	Turning to the estimation of the   partial derivative $\dels \mu$  of the mean function $\mu$, we consider linear estimators
	\begin{equation}\label{eq:linest:mu:bar}
		\muestsb{x}{h}  =  \sum\limits_{\bs j= \bs 1}^{\bs p}   w_{\bs j,\bs p}^{(\bs s)}(\bs x;h; \bs x_{\bs 1},\dotsc,\bs x_{\bs p}) \, \bar Y_{\bs j} \,,  \quad \bar Y_{\bs j} = \frac1n \sum_{i=1}^nY_{i,\bs j},
	\end{equation}
	where $\bs x \in [0,1]^d$,  $w^{(\bs s)}_{\bs j,\bs p}(\bs x;h; \bs x_{\bs 1},\dotsc,\bs x_{\bs p}) = \wjsb xhs$ are deterministic weights depending on the design points, on a bandwidth parameter $h >0$ and the order $\bs s$ of the derivative to be estimated. 


\smallskip

Next we list assumptions used for the weights which we check for local polynomial weights in Section \ref{sec:local_polynomial_estimator}.

\begin{assumption}[Weights of linear estimator]\label{ass:weights} 
For a certain $\bs s \in \N^d$ there exist $c>0$ and $h_0>0$ such that for  sufficiently large $\bs p_{\min} $,  the following holds for all $h \in (c/\bs p_{\min}, h_0]$ for suitable constants $\Cmax, \Clip>0$ which are independent of $n,\bs p,h$ and $\bs x$.
\begin{enumerate}[label=\normalfont{(M\arabic*)},leftmargin=12.5mm]
	\item \label{ass:weights:pol:rep} The weights reproduce the $\bs s^{\mathrm{th}}$ derivative of polynomials of degree $\zeta \geq \abs{\bs s}$. For a polynomial $Q$ of degree $\zeta$ we have
	$$ \hat \mu^{(\bs s)}_{n,\bs p, h}(\bs x; h; \{Q(\bs{x_j})\}_{\bs j = \bs 1,\ldots, \bs p}) = \dels Q(\bs x)\,, \bs x \in [0,1]^d, $$
	or equivalently
	\begin{equation}
		\sum_{\bs 1 \leq \bs j \leq \bs p}(\bs x_{\bs j}-\bs x)^{\bs r} \, \wjsb xhs = \delta_{\bs r,\bs s} \bs s!\,, \quad \bs r \in \N^d \text{ s.t. } \, \abs{\bs r} \leq \zeta \,, \,\bs x \in [0,1]^d. \label{eq:weights:rep:pol:analytic}
	\end{equation}
	\item \label{ass:weights:equal:0} \textit{(Localize).} \quad We have $\wjsb xhs = 0$ if $\norm{\bs x- \bs{x_j}} > h$ for $\bs x \in [0,1]^d$. 
	\item \textit{(Boundedness).} \quad For the absolute values of the weights $$\max_{\bs 1\leq \bs j \leq \bs p} \abs{\wjsb xhs}\leq \frac{\Cmax}{\bs{p^1}\,h^{d+\abs{\bs s}}}, \;\bs x\in [0,1]^d\,.$$
	\label{ass:weights:max}
	\item \textit{(Lipschitz continuous).} \quad It holds
	$$ \absb{ \wjsb xhs - \wjsb yhs } \leq \frac{\Clip}{\bs{p^1}\,h^{d +\abs{\bs{s}}}} \bigg( \frac{\norm{\bs x- \bs y}_\infty }{h} \vee 1 \bigg)\,, \quad \bs x, \bs y \in [0,1]^d\,. $$ \label{ass:weights:lipschitz}
\end{enumerate}	
\end{assumption}
%



   
\begin{assumption}[Sub-Gaussian errors] \label{ass:model} 
		The random variables $\{\e_{i,\bs j} \mid 1 \leq i \leq n,\, \bs 1 \le \bs j \le \bs p\}$ are independent and independent of the processes $Z_1,\dotsc,Z_n$. Further we assume that the distribution of $\e_{i,\bs j}$ is sub-Gaussian, and setting $ \sigma_{i, \bs j}^2 \defeq  \E[\e_{i,\bs j}^2]$ we have that $\sigma^2 \defeq \sup_i \max_{\bs 1 \leq \bs j \leq \bs p} \sigma_{i,\bs j}^2 < \infty$ and that there exists $\zeta>0$ such that $\zeta^2\sigma_{i,\bs j}^2$ is an upper bound for the  sub-Gaussian norm of $\e_{i,\bs j}$. \label{ass:errors} 
\end{assumption}

\begin{theorem} [Upper bounds]\label{thm:mean:upper_bounds} 
	Consider model \eqref{eq:model} under Assumptions \ref{ass:design:distribution} and \ref{ass:model} with $\mu \in \mc H(\alpha, L)$ and $Z\in \mc P(\beta, C_Z)$ for $\alpha, \beta, L, C_Z>0$. Given $\bs s \in \N_0^d$ with $\abs{\bs s} \leq \floor{\alpha}$ consider the linear estimator $\hat \mu_{n,\bs p,h}^{(\bs s)}(\bs x)$ of the derivatives $\dels \mu$ of $\mu$ in \eqref{eq:linest:mu:bar}, and suppose that the weights satisfy Assumption \ref{ass:weights} with $\zeta =\floor \alpha$. Then for sufficiently large $\bs p_{\min}$ and $n$ we have that 
	\begin{equation}\label{eq:upperboundone:w:deriv}
		\sup_{h \in (c/{\bs p_{\min}}, h_0]} \sup_{Z \in \mc P(\beta, C_Z),\, \mu \in \mc H(\alpha,L)}\, a_{n,\bs p, h}^{-1}\, \expec_{\mu,Z} \Big[ \normb{ \hat \mu_{n,\bs p,h}^{(\bs s)}- \dels\, \mu}_\infty \Big] = \cO(1),
	\end{equation}
	with $c, h_0 >0 $ as in Assumption \ref{ass:weights}, and where 
	\begin{equation}\label{eq:orderbound:w:deriv}
		a_{n,\bs p, h} = \max\bigg( L\, h^{\alpha - \abs{\bs s}}, \zeta\, \sigma\, \sqrt{\frac{\log(h^{-1})}{n\bs{p^1}h^{d+2\abs{\bs s}}}}, \frac{C_Z}{\sqrt n\, h^{\abs{\bs s}- \kappa }}\bigg)\,,  
	\end{equation}
    	with $\kappa \defeq \min(\abs{\bs s}, \beta_1)$ for any fixed $ \beta_1$ with $ 0 < \beta_1 < \beta$. 	
\end{theorem}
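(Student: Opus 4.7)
The plan is to decompose
\[ \hat\mu^{(\bs s)}_{n,\bs p,h}(\bs x) - \partial^{\bs s}\mu(\bs x) = B(\bs x) + V_\varepsilon(\bs x) + V_Z(\bs x), \]
with $B(\bs x) = \sum_{\bs j}\wjsb xhs\,\mu(\bs x_{\bs j}) - \partial^{\bs s}\mu(\bs x)$ the deterministic bias, $V_\varepsilon(\bs x) = \sum_{\bs j}\wjsb xhs\,\bar\varepsilon_{\bs j}$ the measurement-noise contribution, and $V_Z(\bs x) = \sum_{\bs j}\wjsb xhs\,\bar Z(\bs x_{\bs j})$ with $\bar Z = n^{-1}\sum_i Z_i$ the process contribution. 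The three entries inside the maximum defining $a_{n,\bs p,h}$ arise as uniform bounds for $\norm{B}_\infty$, $\E\norm{V_\varepsilon}_\infty$ and $\E\norm{V_Z}_\infty$. For $B$, I would Taylor-expand $\mu \in \mathcal H(\alpha,L)$ around $\bs x$ to order $\lfloor\alpha\rfloor$, cancel the polynomial part using the reproduction identity \eqref{eq:weights:rep:pol:analytic}, and estimate the remainder by $|\mu(\bs x_{\bs j}) - T_{\lfloor\alpha\rfloor}\mu(\bs x_{\bs j};\bs x)| \lesssim L\,\norm{\bs x_{\bs j}-\bs x}_\infty^\alpha$. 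Combining the localization (M2), the design count $O(h^d\bs p^{\bs 1})$ from Assumption \ref{ass:design:distribution}, and the individual weight bound (M3) then yields $\norm{B}_\infty \lesssim L h^{\alpha-|\bs s|}$ uniformly on $\mathcal H(\alpha,L)$.

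For $V_\varepsilon$, pointwise in $\bs x$ this is a sum of $n\bs p^{\bs 1}$ independent sub-Gaussian variables whose total sub-Gaussian norm is bounded, via (M3) and Assumption \ref{ass:design:distribution}, by a constant multiple of $\zeta\sigma/\sqrt{n\bs p^{\bs 1} h^{d+2|\bs s|}}$. To upgrade the pointwise control to the sup-norm I would cover $[0,1]^d$ by an $\eta$-net of cardinality polynomial in $h^{-1}$, bound the process on the net by a union bound over the sub-Gaussian tails, and bound the oscillation between net points through the Lipschitz property (M4), choosing $\eta$ fine enough that the oscillation contribution is dominated. The standard maximal inequality then yields the bound $\E\norm{V_\varepsilon}_\infty \lesssim \zeta\sigma\sqrt{\log(h^{-1})/(n\bs p^{\bs 1} h^{d+2|\bs s|})}$.

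The process term carries the novel content of the theorem. Write $V_Z = n^{-1}\sum_i T_i$ with $T_i(\bs x) = \sum_{\bs j}\wjsb xhs\,Z_i(\bs x_{\bs j})$, fix $\beta_1 \in (0,\beta)$ and set $m = \min(\lfloor\beta_1\rfloor, \lfloor\alpha\rfloor)$. Since $Z_i \in \mathcal P(\beta,C_Z)$ and $\beta_1 < \beta$, each $Z_i$ admits $m$ continuous derivatives with a random Hölder constant $M_i$ of exponent $\beta_1$ satisfying $\E[M_i^2]\leq c\,C_Z$. Taylor-expanding $Z_i$ of order $m$ around $\bs x$ and applying (M1) (which is licit because $m \leq \lfloor\alpha\rfloor$) gives
\[ T_i(\bs x) = \partial^{\bs s}Z_i(\bs x)\,\one\{|\bs s|\leq m\} + \sum_{\bs j}\wjsb xhs R_i(\bs x_{\bs j};\bs x), \]
with $|R_i(\bs x_{\bs j};\bs x)| \leq c\, M_i\norm{\bs x_{\bs j}-\bs x}_\infty^{\beta_1}$; arguing as in the bias step then yields $|T_i(\bs x) - \partial^{\bs s}Z_i(\bs x)\,\one\{|\bs s|\le m\}| \leq c\, M_i\, h^{\beta_1 - |\bs s|}$ uniformly in $\bs x$. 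Using i.i.d.\ centering, the bound $\E[M_i^2]\le c\, C_Z$, and (when $|\bs s| \leq m$) the fact that $\partial^{\bs s}Z_i$ is Hölder continuous of exponent $\beta_1 - |\bs s|$, a pointwise $L^2$ estimate combined with the same chaining argument used for $V_\varepsilon$ gives $\E\norm{V_Z}_\infty \lesssim C_Z^{1/2}\, n^{-1/2}\, h^{\kappa-|\bs s|}$ with $\kappa = \min(|\bs s|,\beta_1)$, which is the third term in $a_{n,\bs p,h}$.

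The main obstacle is precisely this last step in the rough-path regime $\beta < |\bs s|$: the Taylor polynomial of $Z_i$ has degree strictly less than $|\bs s|$, so the reproduction property (M1) only produces partial cancellation and the remainder is controlled solely by the \emph{diverging} factor $h^{\beta_1-|\bs s|}$, which is the mechanism by which the parametric rate degrades. Closing the chaining step for $V_Z$ relies on the random Hölder constant being square-integrable, which is the reason the class $\mathcal P(\beta,C_Z)$ is defined through $\E[M^2]\leq C_Z$ and why one must work with $\beta_1$ strictly below $\beta$ rather than at the boundary.
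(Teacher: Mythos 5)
Your decomposition, the Taylor/polynomial-reproduction treatment of the bias, and the sub-Gaussian net-plus-chaining bound for the measurement-noise term all match the paper's argument (its terms $I_1^{\bs p,h,\bs s}$, $I_2^{n,\bs p,h,\bs s}$, $I_3^{n,\bs p,h,\bs s}$ and Lemma \ref{lem:mean:estimation:rates:asbefore}), and your exponent bookkeeping for the process term --- Taylor expansion of $Z_i$ to an order limited by the path smoothness, partial cancellation via (M1), remainder of size $M_i\,h^{\beta_1-\abs{\bs s}}$ --- identifies the right mechanism for the rate degradation.

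There is, however, a genuine gap in how you close the process term. You propose to control $\expec\norm{V_Z}_\infty$ by ``a pointwise $L^2$ estimate combined with the same chaining argument used for $V_\varepsilon$.'' That chaining argument is a union bound over an $\eta$-net using sub-Gaussian tails, and the summands of $V_Z$ are not sub-Gaussian: the only integrability imposed on the class $\mc P(\beta,C_Z)$ is that the random H\"older constant $M$ satisfies $\expec[M^2]\le C_Z$. A union bound over a net of cardinality $N$ polynomial in $h^{-1}$ with only second-moment control costs a factor $\sqrt N$, which destroys the rate, and no exponential tail is available to rescue it. This is exactly why the paper does not reuse the Dudley/sub-Gaussian device for $I_3$ but instead invokes Pollard's maximal inequality for manageable triangular arrays (Lemma \ref{lem:maximalInequalityExpection}): the pathwise Lipschitz control of $\bs x\mapsto X_{ni}(\bs x)$ \emph{relative to the random envelope} $\Phi_{n,i}\propto M_i/(\sqrt n\, h^{\abs{\bs s}-(\beta-\delta)})$ yields a deterministic capacity bound (Lemma \ref{lem:manageability:lipschitz}), and the maximal inequality then bounds $\expec\norm{\sum_i X_{ni}}_\infty$ by an entropy integral times $\expec[\norm{\bs\Phi_n}_2]\lesssim \sqrt{C_Z}$, requiring nothing beyond square integrability of $M$. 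Without this (or an equivalent symmetrization-plus-chaining argument in $L^2$ of the empirical measure) your argument establishes the $n^{-1/2}h^{\kappa-\abs{\bs s}}$ bound only pointwise, not in supremum norm. The same issue affects the leading term $n^{-1}\sum_i\partial^{\bs s}Z_i$ in the smooth case, whose sup-norm must likewise be shown to be $\Op{n^{-1/2}}$ through this empirical-process machinery rather than a sub-Gaussian union bound.
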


Theorem \ref{thm:mean:upper_bounds}  follows from Lemmas \ref{lem:mean:estimation:rates:asbefore} and \ref{lem:mean:estimation:rates:new} in Section \ref{sec:proof:theorem:rates}.

\begin{corollary}[Rates of convergence]\label{cor:rateconvup}
    Under the assumptions of Theorem \ref{thm:mean:upper_bounds} we have the following. For the smoothness $\beta>0$ of the processes $Z\in \mc P(\beta, C_Z)$, 
    \begin{enumerate}
    \item \textit{(Rough processes)} 
    if $\beta \leq \abs{\bs s}$  then for any fixed $0 < \beta_1 < \beta$, setting
	\begin{align}
		h^\star \simeq \max \bigg\{ \frac1{\bs{p}_{\min}}, \bigg( \frac{\log(n \bs{p^1})}{n\,\bs{p^1}}\bigg) ^{ \frac1{2\alpha + d} },n^{- \frac{1}{2(\alpha - \beta_1)}}  \bigg\}\,,\label{eq:deriv:h:star:kappa}
	\end{align}
	we obtain 
   	 \end{enumerate}
    {\small 
	\begin{align}
		& \sup_{Z \in \mc P(\beta),\, \mu \in \mc H(\alpha)}\,  \expec_{\mu,Z} \Big[ \normb{ \hat \mu_{n,\bs p,h^\star}^{(\bs s)} - \dels\, \mu}_\infty \Big] 
        = & \mc O \bigg(\max \bigg(\bs p_{\min}^{-(\alpha -\abs{\bs s})},\bigg( \frac{\log(n \bs{p^1})}{n\,\bs{p^1}}\bigg) ^{ \frac{\alpha -\abs{\bs s}}{2\alpha + d} },  n^{- \frac{\alpha -\abs{\bs s}}{2(\alpha - \beta_1 )}}  \bigg)\bigg)\,, \label{eq:deriv:rate:kappa}
	\end{align}
    }
      \begin{enumerate}
    \item[ii)] \textit{(Smooth processes)}  
    if $\beta > \abs{\bs s}$, then setting 
	\begin{align}
		h^\star \simeq \max \bigg\{ \frac1{\bs{p}_{\min}}, \bigg( \frac{\log(n \bs{p^1})}{n\,\bs{p^1}}\bigg) ^{ \frac1{2\alpha + d} } \bigg\}\,,\label{eq:deriv:h:star}
	\end{align}
	we obtain 
   	 \end{enumerate}
	\begin{align}
		\sup_{Z \in \mc P(\beta),\, \mu \in \mc H(\alpha)}\,  \expec_{\mu,Z} \Big[ \normb{ \hat \mu_{n,\bs p,h^\star}^{(\bs s)} - \dels \mu}_\infty \Big] = \mc O \bigg(\max \bigg(\bs p_{\min}^{-(\alpha -\abs{\bs s})},\bigg( \frac{\log(n \bs{p^1})}{n\,\bs{p^1}}\bigg) ^{ \frac{\alpha -\abs{\bs s}}{2\alpha + d} }, n^{ - \frac12}  \bigg)\bigg)\,. \label{eq:deriv:rate}
	\end{align}

        
\end{corollary}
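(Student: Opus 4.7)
The plan is to apply Theorem~\ref{thm:mean:upper_bounds} directly and optimise the bandwidth $h$ in the bound $a_{n,\bs p,h}$ subject to the constraint $h\in (c/\bs p_{\min}, h_0]$. Write $a_{n,\bs p,h}=\max\bigl(B(h),V(h),P(h)\bigr)$, where $B(h)=L\,h^{\alpha-|\bs s|}$ is the bias term, $V(h)=\zeta\sigma\sqrt{\log(h^{-1})/(n\bs p^{\bs 1}h^{d+2|\bs s|})}$ is the observation-error term, and $P(h)=C_Z\,n^{-1/2}\,h^{-(|\bs s|-\kappa)}$ is the process term. Since $\alpha>|\bs s|$, $B$ is increasing in $h$. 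Since $d+2|\bs s|>0$, $V$ is decreasing in $h$. In the smooth case $\beta>|\bs s|$ we have $\kappa=|\bs s|$, so $P(h)=C_Z/\sqrt n$ is constant; in the rough case $\beta\le|\bs s|$ we pick $0<\beta_1<\beta$, giving $\kappa=\beta_1<|\bs s|$, so $P$ is strictly decreasing in $h$.

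Next, balance the competing terms. Setting $B(h)=V(h)$ gives $h^{2\alpha+d}\asymp \log(h^{-1})/(n\bs p^{\bs 1})$; choosing $h_2\asymp (\log(n\bs p^{\bs 1})/(n\bs p^{\bs 1}))^{1/(2\alpha+d)}$ is self-consistent because then $\log(h_2^{-1})\asymp\log(n\bs p^{\bs 1})$, and $B(h_2)=V(h_2)\asymp(\log(n\bs p^{\bs 1})/(n\bs p^{\bs 1}))^{(\alpha-|\bs s|)/(2\alpha+d)}$. In the rough case, balance $B(h)=P(h)$: $h^{\alpha-\beta_1}\asymp n^{-1/2}$, which yields $h_3\asymp n^{-1/(2(\alpha-\beta_1))}$ and $B(h_3)\asymp n^{-(\alpha-|\bs s|)/(2(\alpha-\beta_1))}$. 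The design restriction forces $h\ge h_1\asymp 1/\bs p_{\min}$, producing the discretisation contribution $B(h_1)\asymp \bs p_{\min}^{-(\alpha-|\bs s|)}$.

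Now define $h^\star=\max(h_1,h_2,h_3)$ in the rough case and $h^\star=\max(h_1,h_2)$ in the smooth case, as in \eqref{eq:deriv:h:star:kappa} and \eqref{eq:deriv:h:star}. Because $B$ is non-decreasing in $h$, $B(h^\star)=\max(B(h_1),B(h_2),B(h_3))$. Because $V$ and $P$ are non-increasing, $V(h^\star)\le V(h_2)\asymp B(h_2)$ and (in the rough case) $P(h^\star)\le P(h_3)\asymp B(h_3)$, while in the smooth case $P(h^\star)=C_Z/\sqrt n$ contributes the parametric term. Hence $a_{n,\bs p,h^\star}$ is bounded by the maximum of $B(h_1),B(h_2),B(h_3)$ (rough) or $B(h_1),B(h_2),C_Z/\sqrt n$ (smooth), which matches the rates claimed in \eqref{eq:deriv:rate:kappa} and \eqref{eq:deriv:rate}. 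Finally, for $n$ and $\bs p_{\min}$ large enough, $h^\star\in(c/\bs p_{\min},h_0]$: the lower bound follows from $h^\star\ge h_1$ (adjusting the constant hidden in $\simeq$ if necessary), while $h_2,h_3\to 0$ ensures $h^\star\le h_0$ eventually. The conclusion then follows by taking the supremum over $h\in(c/\bs p_{\min},h_0]$ in Theorem~\ref{thm:mean:upper_bounds}.

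The only genuinely delicate point is the self-consistent treatment of the logarithmic factor at $h_2$, which is handled by noting $\log(h_2^{-1})=(2\alpha+d)^{-1}\bigl(\log(n\bs p^{\bs 1})-\log\log(n\bs p^{\bs 1})\bigr)\asymp\log(n\bs p^{\bs 1})$; the remainder of the argument is the standard monotonicity bookkeeping outlined above, and there is no substantive additional obstacle.
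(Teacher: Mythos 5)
Your proposal is correct and follows essentially the same route as the paper, which likewise deduces the corollary from Theorem \ref{thm:mean:upper_bounds} by choosing $h^\star$ to balance the bias term against the observation-error and process terms, with the $1/\bs p_{\min}$ floor imposed by Assumption \ref{ass:weights}. Your write-up merely makes explicit the monotonicity bookkeeping and the self-consistency of the logarithmic factor that the paper leaves implicit.
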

This follows from Theorem \ref{thm:mean:upper_bounds}: The second term in the maxima of \eqref{eq:deriv:h:star} and \eqref{eq:deriv:h:star:kappa} balances the first two terms in \eqref{eq:orderbound:w:deriv}, while the third term in \eqref{eq:deriv:h:star:kappa} balances the first and third terms in \eqref{eq:orderbound:w:deriv} for $\kappa= \beta_1 < \abs{\bs s}$. The Assumption \ref{ass:weights} requires the bandwidth to be of order at least $1/\bs p_{\min}$.

\begin{remark}[Improving the bound \eqref{eq:deriv:rate:kappa}] 

We point out that the last term in the rate \eqref{eq:deriv:rate:kappa} can be improved to obtain
$$\Big(\frac{\log n}{n}\Big)^{ \frac{\alpha -\abs{\bs s}}{2(\alpha - \beta )}},$$
so that
	{\small 
    \begin{align*}
		& \sup_{Z \in \mc P(\beta),\, \mu \in \mc H(\alpha)}\,  \expec_{\mu,Z} \Big[ \normb{ \hat \mu_{n,\bs p,h^\star}^{(\bs s)} - \dels\, \mu}_\infty \Big] 
        = & \mc O \bigg(\max \bigg(\bs p_{\min}^{-(\alpha -\abs{\bs s})},\bigg( \frac{\log(n \bs{p^1})}{n\,\bs{p^1}}\bigg) ^{ \frac{\alpha -\abs{\bs s}}{2\alpha + d} },  \Big(\frac{\log n}{n}\Big)^{ \frac{\alpha -\abs{\bs s}}{2(\alpha - \beta )}} \bigg)\bigg)\,. 
	\end{align*}
    }
 This follows from the bound \eqref{eq:bound:rougher:paths:then:derivative} in Lemma \ref{lem:mean:estimation:rates:new} in Section \ref{sec:proof:theorem:rates} by taking $\delta = 1/\log n$ in \eqref{eq:bound:rougher:paths:then:derivative}, and using $(\log n / n)^{- \frac{1}{2(\alpha - \beta)}}$ as the final term in the maximum \eqref{eq:deriv:h:star:kappa} for $h^*$. 
However, for classes of processes such as fractional Brownian motion this does not constitute a meaningful improvement since there is no maximal exponent at which these are Hölder continuous. 
    
\end{remark}

\begin{remark}[Effect of smoothness of paths, and parametric rate]
As for the mean function, for the parametric rate  $1/\sqrt n$ to be attainable requires a sufficiently large number of design points, that is a sufficiently large $\bs p_{\min}$, so that the contribution of the discretization error, $\bs p_{\min}^{-(\alpha -\abs{\bs s})}$, as well as that of the observational noise, the second term in \eqref{eq:deriv:rate} is  negligible compared to $1/\sqrt n$. 

However, estimating the partial derivative of order $\abs{\bs s}$ of the mean function $\mu$ at the parametric rate also requires sufficiently smooth paths for the processes $Z_i$. If the paths themselves are smooth of order $\beta < \abs{ \bs s}$, then on the one hand consistent estimation is still possible, but on the other hand the parametric rate can no longer be achieved, but only a rate of approximately
$n^{- (\alpha -\abs{\bs s})/(2(\alpha - \beta ))}.$
We will complement this by providing lower bounds below.     
\end{remark}

\begin{remark}[Relation to nonparametric regression with dependent errors]
After forming means over the observational rows, model \eqref{eq:model} reduces to single row
\begin{align}\label{eq:reducednonreg}
	\bar Y_{  \bs j,n}= \mu( \bs{x_{ \bs j}}) + \bar{Z}_{n}( \bs{x_{ \bs j}}) + \bar \epsilon_{\bs j,n} \,, \quad   \bs j = \bs 1,\ldots,\bs p  \,, 
\end{align}
where
\begin{align*}
	\bar Y_{\bs j,n} = \frac1n \sum_{i=1}^n Y_{i,\bs j}, \quad \bar \epsilon_{\bs j,n} = \frac1n \sum_{i=1}^n \varepsilon_{i,\bs j} \qquad \text{and} \qquad \bar{Z}_{n}(\bs x) = \frac1n \sum_{i=1}^n Z_i(\bs x) \, .
\end{align*}
While \eqref{eq:reducednonreg} can be interpreted as a nonparametric regression model with depend errors, consisting of the sum $\bar{Z}_{n}( \bs{x_{ j}}) + \bar \epsilon_{\bs j,n}$, note that the covariance $\text{cov}(\bar{Z}_{n}( \bs{x_{ j}}), \bar{Z}_{n}( \bs{x_{ k}}))$ need to become small even if $|\bs j - \bs k|$ is large. Indeed, $\bs p$ does not effect the convergence rate of the error term involving the processes. Therefore, there is no close relation to the literature on nonparametric regression with long-range dependence as e.g.~\citet{hall1990nonparametric}.  
\end{remark}

Next we turn to lower bounds. For the first bound we use the following design assumption, which we also require  in the construction of the local polynomial weights.  
\begin{assumption} \label{ass:designdensity}
	Assume that there exist Lipschitz continuous densities $f_k\colon[0,1]\to \R$ bounded by $0<\fkmin \leq f_k(t) \leq \fkmax < \infty$, $t \in [0,1]$ and $1 \leq k \leq d$, such that the design points $x_{k,l}$,  $1 \leq l \leq p_k $
	\begin{equation*}
		\int_0^{x_{k,l}} f_k(t) \, \dx t=\frac{l-0.5}{p_k} \,, \qquad l=1,\ldots,p_k,\quad k=1,\dotsc,d \,.	
	\end{equation*}
	\end{assumption}
	\citet[Lemma 7]{berger2023dense} shows that Assumption \ref{ass:designdensity} implies Assumption \ref{ass:design:distribution}.
\begin{theorem}[Lower bounds]\label{thm:lower:bound:mean:derivatives}
	Assume that in model \eqref{eq:model} the errors $\epsilon_{i,\bs j}$ are independent and normally $\mc N(0, \sigma_0^2), \sigma_0^2>0$ distributed. Suppose that $\alpha, \beta, L, C_Z>0$ and that $\bs s$ satisfies $\abs{\bs s} \leq \floor{\alpha}$.  
\begin{enumerate}
    \item If the design satisfies Assumption \ref{ass:designdensity}, and all coordinates of $\bs p$ are of the same order, that is $\bs p_{\min} \simeq \bs p_{\max}$, then
	\begin{align}\label{eq:lowerbound1}
		\liminf_{n,\bs p \to \infty} \,\inf_{\hat \mu^{(\bs s)}_{n,\bs p}}\sup_{\substack{\mu \in \mc H(\alpha, L)\\ Z \in \mc P(\beta, C_Z)}} \expec_{\mu, Z}\big[ \bs p_{\min}^{(\alpha -\abs{\bs s})} \norm{\hat \mu_{n,\bs p}^{(\bs s)} - \dels\, \mu}_\infty] \geq c > 0\,.
	\end{align}
    \item Setting 
	$$ a_{n,\bs p} = \max \bigg(\bigg( \frac{\log(n \bs{p^1})}{n\,\bs{p^1}}\bigg) ^{ \frac{\alpha -\abs{\bs s}}{2\alpha + d} }, n^{-1/2} \bigg) \,,$$
	we have 
	\begin{align}\label{eq:lowerbound2}
		\liminf_{n,\bs p \to \infty} \,\inf_{\hat \mu^{(\bs s)}_{n,\bs p}}\sup_{\substack{\mu \in \mc H(\alpha, L)\\ Z \in \mc P(\beta, C_Z)}} \expec_{\mu, Z}\big[ a_{n,\bs p}^{-1} \norm{\hat \mu_{n,\bs p}^{(\bs s)} - \dels\, \mu}_\infty\big] \geq c > 0\,.
	\end{align}
    \item Moreover, if $\beta <  \abs{\bs s}$, then for each $\beta < \beta_1 < \alpha$, we have 
    	\begin{align}\label{eq:lowerbound3}
		\liminf_{n, p \to \infty} \,\max_{\abs{\bs k} = \abs{\bs s}} \inf_{\hat \mu^{(\bs k)}_{n, p}}\sup_{\substack{\mu \in \mc H(\alpha, L)\\ Z \in \mc P(\beta, C_Z)}} \expec_{\mu, Z}\big[ n^{ \frac{\alpha - \abs{\bs s}}{2(\alpha - \beta_1 )}} \norm{\hat \mu_{n, p}^{(\bs k)} -\partial^{\bs k}\mu}_\infty] \geq c > 0\,.
	\end{align}
     
\end{enumerate}
 %
\end{theorem}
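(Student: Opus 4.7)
Each of the three bounds will come from a Le Cam two-point argument: in each case I pick a process $Z \in \mc P(\beta, C_Z)$ and a pair $\mu_0,\mu_1 \in \mc H(\alpha, L)$, upper-bound the total KL between the resulting observation laws, and lower-bound $\|\partial^{\bs s}(\mu_1-\mu_0)\|_\infty$. For \eqref{eq:lowerbound1} I take $Z\equiv 0 \in \mc P(\beta, C_Z)$ so no process randomness remains. Assumption \ref{ass:designdensity} with $\bs p_{\min} \simeq \bs p_{\max}$ partitions $[0,1]^d$ into Cartesian cells of side $\asymp 1/\bs p_{\min}$. Fix a cell center $\bs x_0$ and put $\mu_1(\bs x) = c_0 \bs p_{\min}^{-\alpha}\phi\bigl(\bs p_{\min}(\bs x - \bs x_0)/c_1\bigr)$ with $\phi \in C^\infty(\R^d)$ supported in $(-1,1)^d$ and $\partial^{\bs s}\phi(\bs 0) \neq 0$. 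Small enough $c_0, c_1$ give $\mu_1 \in \mc H(\alpha, L)$ and $\mu_1(\bs x_{\bs j}) = 0$ at every design point, so the laws under $\mu_0 \equiv 0$ and $\mu_1$ coincide exactly, while $\|\partial^{\bs s}\mu_1\|_\infty \gtrsim \bs p_{\min}^{-(\alpha - |\bs s|)}$ by scaling.

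For \eqref{eq:lowerbound2} the two rates in the maximum are handled separately. The nonparametric rate $((\log N)/N)^{(\alpha - |\bs s|)/(2\alpha + d)}$ with $N \defeq n\bs p^{\bs 1}$ comes from a classical Assouad construction with $Z \equiv 0$: superimpose independent $\pm 1$-signed bumps of height $cLh^\alpha$ on a sub-grid of $h^{-d}$ cells at resolution $h \simeq ((\log N)/N)^{1/(2\alpha + d)}$ and control the pairwise KL cell-by-cell. For the parametric rate $n^{-1/2}$ I use a rank-one process: pick $\psi \in \mc H(\alpha, L/2)$ with $\partial^{\bs s}\psi(\bs x_0) \neq 0$, set $Z_i = \xi_i \psi$ with $\xi_i \sim \mc N(0, \tau^2)$ and $\tau$ small enough that $Z_i \in \mc P(\beta, C_Z)$, and take $\mu_1 = c n^{-1/2}\psi$. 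For each $i$ the vector $(Y_{i,\bs j})_{\bs j}$ is Gaussian with mean shift $cn^{-1/2}\bs v$, $\bs v \defeq (\psi(\bs x_{\bs j}))_{\bs j}$, and covariance $\Sigma = \tau^2 \bs v \bs v^\top + \sigma_0^2 I$. Sherman-Morrison then gives $\bs v^\top \Sigma^{-1}\bs v = \sigma_0^{-2}\|\bs v\|^2/(1 + \tau^2\sigma_0^{-2}\|\bs v\|^2) \leq \tau^{-2}$, so the $n$-fold KL is bounded by $c^2/(2\tau^2)$ uniformly in $\bs p$, and Le Cam delivers the $n^{-1/2}$ rate.

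The novel bound \eqref{eq:lowerbound3} is the main case. Since the statement takes a maximum over $|\bs k| = |\bs s|$, I specialize to $\bs k = (|\bs s|, 0, \ldots, 0)$ and work essentially in coordinate $1$. Choose $\phi_1 \in C^\infty_c((-1,1))$ with $\phi_1^{(|\bs s|)}(0) \neq 0$, $g \in C^\infty([0,1]^{d-1})$ with $g(\bs 0) \neq 0$, and an interior point $x_{1,0}$ such that $[x_{1,0}-\delta, x_{1,0}+\delta]$ lies in a fixed compact subinterval of $(0,1)$ for all small $\delta$. Set $\mu_0 \equiv 0$ and $\mu_1(\bs x) = c' \delta^\alpha \phi_1((x_1 - x_{1,0})/\delta)\,g(x_2, \ldots, x_d)$. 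A rescaling argument shows that $\|\mu_1\|_{\mc H, \alpha}$ is bounded by $c'$ times a constant depending only on $\phi_1, g$ uniformly in $\delta \in (0,1]$, so $\mu_1 \in \mc H(\alpha, L)$ for small enough $c'$, while $\|\partial^{\bs k}\mu_1\|_\infty \asymp \delta^{\alpha - |\bs s|}$. For the process I take $Z_i(\bs x) = \tau R^{\beta_1}_i(x_1)\,g(x_2, \ldots, x_d)$ with i.i.d.~Riemann-Liouville processes $R^{\beta_1}_i$ as in \eqref{eq:RLfB1}. Since $R^{\beta_1}$ has a.s.~Hölder paths of every order strictly below $\beta_1$ with a square-integrable Hölder constant and $\beta_1 > \beta$, small enough $\tau$ places $Z_i$ in $\mc P(\beta, C_Z)$.

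The KL between the two Gaussian experiments equals $(n/2)\bs m_1^\top (\Sigma_Z + \sigma_0^2 I)^{-1}\bs m_1$, with $\bs m_1 = (\mu_1(\bs x_{\bs j}))_{\bs j}$ and $\Sigma_Z$ the Gram matrix of $K \defeq \cov(Z_1)$ on the design. By the data-processing inequality this is bounded above by $(n/2)\|\mu_1\|^2_{\mc H(K)}$, the Cameron-Martin norm squared of $\mu_1$ relative to $Z_1$. The rank-one product structure of $K$ gives $\mc H(K) = \{h(x_1)\, g(\bs x') : h \in \mc H(\tau^2 K_{R^{\beta_1}})\}$ with $\|h\cdot g\|^2_{\mc H(K)} = \tau^{-2}\|h\|^2_{\mc H(K_{R^{\beta_1}})}$. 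The Riemann-Liouville RKHS is (\cite[Section 10]{van2008reproducing}) $I_{0+}^{\beta_1 + 1/2}(L^2[0,1])$ with $\|I_{0+}^{\beta_1 + 1/2}q\|_{\mc H} = \|q\|_{L^2}$, so applying this to the compactly supported $h(x_1) = c'\delta^\alpha \phi_1((x_1 - x_{1,0})/\delta)$ and using a dilation estimate for the fractional derivative yields $\|h\|^2_{\mc H(K_{R^{\beta_1}})} \asymp \delta^{2(\alpha - \beta_1)}$; the total KL is therefore of order $n\delta^{2(\alpha-\beta_1)}$. Choosing $\delta \asymp n^{-1/(2(\alpha - \beta_1))}$ bounds it by a constant, Le Cam's two-point inequality concludes, and $\|\partial^{\bs k}\mu_1\|_\infty \asymp \delta^{\alpha - |\bs s|}$ is the claimed rate. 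The principal obstacle is justifying this RKHS scaling rigorously, i.e.~verifying $\|D_{0+}^{\beta_1 + 1/2}h\|^2_{L^2} \asymp \delta^{2(\alpha - \beta_1)}$; because $\phi_1$ is supported strictly inside $(-1,1)$ and $x_{1,0}$ is bounded away from $0$, the Riemann-Liouville kernel $(x_1 - s)^{\beta_1 - 1/2}$ is smooth on the support of $h$ and the lower boundary at $0$ contributes only a lower-order term, so the scaling follows from a standard dilation/Fourier computation.
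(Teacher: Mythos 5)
Your parts (i), the parametric $n^{-1/2}$ bound in (ii), and part (iii) follow essentially the paper's own route: a two-point argument with a bump vanishing at the design points for (i); a rank-one Gaussian process aligned with the mean perturbation for the $n^{-1/2}$ bound (the paper takes $Z_i=N_i\,\bs x^{\bs s}$ and $\mu_1=\bs x^{\bs s}/\sqrt n$, you take $Z_i=\xi_i\psi$, $\mu_1=cn^{-1/2}\psi$ and use Sherman--Morrison, which is equivalent); and for (iii) the Riemann--Liouville $\beta_1$-fBm with the KL controlled by the Cameron--Martin norm from \citet[Lemma 10.2]{van2008reproducing} and $\delta\asymp n^{-1/(2(\alpha-\beta_1))}$, exactly as in the paper. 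Two remarks on (iii): your justification of the dilation estimate is mis-stated --- the kernel $(t-s)^{\beta_1-1/2}$ is \emph{not} smooth on the support of $h$ (it is singular on the diagonal $s=t$); the estimate nevertheless holds because $h$ vanishes near $0$, so $D_{0+}^{\beta_1+1/2}$ acts on it as the translation-invariant Liouville derivative and $D_{0+}^{\beta_1+1/2}h(t)=c'\delta^{\alpha-\beta_1-1/2}\chi((t-x_{1,0})/\delta)$ exactly, with $\chi=D_{-\infty+}^{\beta_1+1/2}\phi_1\in L^2(\R)$. (The paper avoids this by centering the bump at the origin, where the scaling identity $\mu_{1,h}=I_{0+}^{\beta_1+1/2}(h^{\alpha-\beta_1-1/2}g(\cdot/h))$ is exact.) You also assert without argument that $R^{\beta_1}\in\mc P(\beta,C_Z)$; for $\beta_1>1$ this requires showing that the paths are $\floor{\beta_1}$ times differentiable with Hölder derivatives and square-integrable Hölder constant, which the paper establishes by computing $\partial_t^{\floor{\beta_1}}\partial_s^{\floor{\beta_1}}\gamma_{\beta_1}$ and identifying it with the covariance of a lower-order RL process. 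This should be supplied or cited, not just asserted.

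The genuine gap is the nonparametric rate in (ii). An Assouad construction with independent $\pm1$-signed bumps on all $h^{-d}$ cells cannot deliver the claimed rate $\bigl(\log(n\bs p^{\bs 1})/(n\bs p^{\bs 1})\bigr)^{(\alpha-|\bs s|)/(2\alpha+d)}$: Assouad's lemma lower-bounds a Hamming/additive loss, not the supremum norm, and even if you convert the $2^{h^{-d}}$ hypercube vertices into a Fano argument (they are all pairwise $2\delta$-separated in sup-norm since the bumps are disjoint), the pairwise KL grows like $h^{-d}\cdot n\bs p^{\bs 1}h^{2\alpha+d}$ while $\log(2^{h^{-d}})\asymp h^{-d}$, so the balancing condition becomes $n\bs p^{\bs 1}h^{2\alpha+d}\lesssim 1$ and the logarithmic factor is lost. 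To get the log factor one must use $M\asymp h^{-d}$ hypotheses each consisting of a \emph{single} bump at a distinct location (plus $\mu_0=0$): then the pairwise KL is $\asymp n\bs p^{\bs 1}h^{2\alpha+d}$ while $\log M\asymp\log(n\bs p^{\bs 1})$, and Tsybakov's Proposition 2.3 yields $h\asymp\bigl(\log(n\bs p^{\bs 1})/(n\bs p^{\bs 1})\bigr)^{1/(2\alpha+d)}$. This is precisely the construction in the paper's appendix; your proposal as written proves only the weaker bound without the logarithm.
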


\begin{remark}[Discussion of lower bounds]
    The lower bounds in \eqref{eq:lowerbound1} and \eqref{eq:lowerbound2} correspond to the upper bounds in \eqref{eq:deriv:rate}. However, in \eqref{eq:lowerbound1} since the partial derivatives in $\bs s$ are potentially in directions with a higher number of design points as compared to $\bs p_{\min}$, for a consistent statement we require that the all coordinates of $\bs p$ are of the same order.  The rate in \eqref{eq:lowerbound3}  corresponds to the last term in \eqref{eq:deriv:rate:kappa}. However, we have to take exponents $\beta_1$ larger than the critical exponent $\beta$. Further, we only achieve the lower bound if all partial derivatives are taken in the same coordinate, which we incoporate in the statement by considering estimates of all partial derivatives of a given order jointly. Note that these are no additional restrictions in the one-dimensional setting $d=1$.  

    %
\end{remark}

Finally we show asymptotic normality for smooth processes under the $\sqrt n $ - rate. 

    	\begin{theorem}[Asymptotic normality] \label{thm:asymptotic:normality}
        As in Theorem \ref{thm:mean:upper_bounds}, consider model \eqref{eq:model} under the Assumptions \ref{ass:design:distribution} and \ref{ass:model} with $\mu \in \mc H( \alpha,L)$ and $Z\in \mc P(\beta, C_Z)$ for $\alpha, \beta, L, C_Z>0$. Given $\bs s \in \N_0^d$ with $\abs{\bs s} \leq \floor{\alpha}$ let  $\hat \mu_{n,\bs p,h}^{(\bs s)}(\bs x)$ be  the linear estimator of the parial derivative $\dels \mu$ of $\mu$ in \eqref{eq:linest:mu:bar}. 
        
        Suppose that the weights satisfy Assumption \ref{ass:weights} with $\zeta =\floor \alpha$, and that $\beta > \abs{\bs s}$. 
        %
        If  for some $\delta > 1$ and $c_1, c_2>0$ the sequence of intervals 
%
        $$H_n \defeq \big[c_1\,\log(\bs{p^1})^{\delta/(d+2{\abs{\bs s}})}\,\max\big((\log(\bs{p^1})/\bs{p^1})^{1/(2\abs{\bs s} + d)}, \bs{p}_{\min}^{-1}\big), \, c_2\,\log(\bs{p^1})^{-\delta} n^{-\frac1{2(\alpha - \abs{\bs s})}}\big]$$
    is non-empty, than  for smoothing parameters $h_n = h \in H_n $  
        we obtain the CLT
		\begin{align*}
			\sqrt n \big( \muestsb \cdot h -\dels \mu\big) \ \stackrel{D}{\longrightarrow} \ \mc G \big(0,\,\expec[\dels Z\,\dels Z]\,\big)\,,
		\end{align*}
		where $\mc G$ is a real-valued Gaussian process on $[0,1]^d$ with covariance kernel 
        $$\expec[\dels Z(\bs x)\,\dels Z(\bs y)] = \partial^{(\bs s, \bs s)^\top} \Gamma(\bs x, \bs y) 
        \qquad  \bs x, \bs y\in [0,1]^d$$ of the $\bs s$ - partial derivative process $\dels Z$. 
	\end{theorem}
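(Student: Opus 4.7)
The plan is to decompose the centred estimator into a deterministic-bias piece, a process piece, and an observational-noise piece, to handle the bias and the noise by the same machinery used for the upper bounds, and to reduce the weak convergence of the process piece to a functional CLT for the IID sequence of smooth paths $\dels Z_i$, which is available since $\beta > \abs{\bs s}$. With $\bar Z_n = n^{-1}\sum_{i=1}^n Z_i$ and $\bar\varepsilon_{\bs j} = n^{-1}\sum_{i=1}^n \varepsilon_{i,\bs j}$, model \eqref{eq:model} gives
\[
  \sqrt n\bigl(\muestsb{x}{h} - \dels \mu(\bs x)\bigr) \;=\; A_n(\bs x) + B_n(\bs x) + C_n(\bs x),
\]
with $A_n(\bs x) = \sqrt n\bigl(\sum_{\bs j}\wjsb{x}{h}{s}\mu(\bs x_{\bs j}) - \dels\mu(\bs x)\bigr)$, $B_n(\bs x) = \sqrt n\sum_{\bs j}\wjsb{x}{h}{s}\bar Z_n(\bs x_{\bs j})$ and $C_n(\bs x) = \sqrt n\sum_{\bs j}\wjsb{x}{h}{s}\bar\varepsilon_{\bs j}$. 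By Slutsky's theorem the CLT reduces to showing $\norm{A_n}_\infty \to 0$, $\norm{C_n}_\infty \to 0$ in probability and $B_n \stackrel{D}{\longrightarrow} \mc G\bigl(0, \partial^{(\bs s, \bs s)^\top}\Gamma\bigr)$ in $C([0,1]^d)$.

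The deterministic term $A_n$ is handled by the bias part of Lemma \ref{lem:mean:estimation:rates:asbefore}: $\norm{A_n}_\infty = \Oop{\sqrt n\,L\,h^{\alpha - \abs{\bs s}}}$, and the upper endpoint of $H_n$ renders this $\Oop{\log(\bs p^{\bs 1})^{-\delta(\alpha-\abs{\bs s})}} = \oop 1$. For $C_n$, Assumption \ref{ass:errors} makes each $C_n(\bs x)$ a centred sub-Gaussian variable whose variance proxy is bounded by $\zeta^2\sigma^2\Cmax^2\Ccard/(\bs p^{\bs 1}h^{d+2\abs{\bs s}})$ via \ref{ass:weights:max}, \ref{ass:weights:equal:0} and Assumption \ref{ass:design:distribution}. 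A sub-Gaussian maximal inequality on an $h$-grid of $[0,1]^d$ followed by Lipschitz interpolation using \ref{ass:weights:lipschitz} gives $\norm{C_n}_\infty = \Op{\sqrt{\log(\bs p^{\bs 1})/(\bs p^{\bs 1}h^{d+2\abs{\bs s}})}}$; the lower endpoint of $H_n$ forces $\bs p^{\bs 1}h^{d+2\abs{\bs s}} \geq c\log(\bs p^{\bs 1})^{1+\delta}$, so $\norm{C_n}_\infty = \Op{\log(\bs p^{\bs 1})^{-\delta/2}} = \op 1$ using $\delta > 1$.

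The core of the argument concerns $B_n$. Introduce the linear discrete-differentiation operator $(L_h f)(\bs x) \defeq \sum_{\bs j}\wjsb{x}{h}{s} f(\bs x_{\bs j})$, so that $B_n = n^{-1/2}\sum_{i=1}^n L_h Z_i$, and fix any $\beta' \in (\abs{\bs s}, \beta)$ with $\lfloor \beta'\rfloor \leq \lfloor \alpha\rfloor$. Taylor expansion at $\bs x$ to degree $\lfloor\beta'\rfloor$, together with the polynomial-reproduction property \ref{ass:weights:pol:rep}, localisation \ref{ass:weights:equal:0}, boundedness \ref{ass:weights:max} and the cardinality estimate of Assumption \ref{ass:design:distribution}, produces the pathwise operator bound
\[
  \norm{L_h f - \dels f}_\infty \;\leq\; C\,h^{\beta' - \abs{\bs s}}\,\norm{f}_{\mc H, \beta'}.
\]
Applied to $Z_i$, this makes $L_h Z_i - \dels Z_i$ centred IID elements of $C([0,1]^d)$ with $\expec\norm{L_h Z_i - \dels Z_i}_\infty^2 \leq C^2 C_Z h^{2(\beta'-\abs{\bs s})}$; a chaining argument that exploits this pointwise variance scale (rather than the pathwise sup-norm) then gives $\norm{B_n - \dels(\sqrt n \bar Z_n)}_\infty = \Op{h^{\beta'-\abs{\bs s}}\sqrt{\log(1/h)}} = \op 1$. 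Meanwhile $\dels Z_i$ are IID centred elements of $C([0,1]^d)$ with $\expec(\dels Z_i(\bs x) - \dels Z_i(\bs y))^2 \leq C_Z\norm{\bs x - \bs y}_\infty^{2(\beta - \abs{\bs s})}$ by the Hölder control built into $\mc P(\beta, C_Z)$, and the covariance $\partial^{(\bs s, \bs s)^\top}\Gamma(\bs x, \bs y) = \expec[\dels Z(\bs x)\dels Z(\bs y)]$ admits a Gaussian realisation with Hölder-$(\beta - \abs{\bs s})$ continuous paths by the Kolmogorov--Chentsov theorem, so the IID functional CLT in $C([0,1]^d)$ yields $\dels(\sqrt n \bar Z_n) \stackrel{D}{\longrightarrow} \mc G\bigl(0, \partial^{(\bs s, \bs s)^\top}\Gamma\bigr)$. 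Slutsky's theorem then closes the proof.

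The main obstacle is sharpening $\norm{B_n - \dels(\sqrt n \bar Z_n)}_\infty = \op 1$: a naive triangle inequality yields only $\Op{\sqrt n\,h^{\beta'-\abs{\bs s}}}$, which is inadequate when $\beta < \alpha$ and cannot be absorbed by the $H_n$-upper-bound scale. The remedy is to exploit the mean-zero structure of the summands $L_h Z_i - \dels Z_i$ and invoke Dudley- or Talagrand-type empirical-process inequalities at the variance scale $O(h^{2(\beta'-\abs{\bs s})})$, whose intrinsic-metric entropy is controlled by the Lipschitz regularity \ref{ass:weights:lipschitz} of the weights. A companion technicality is the careful verification of the IID functional CLT in $C([0,1]^d)$ for $\dels Z_i$, which rests on the Dudley integral condition supplied by the Hölder modulus inherited from $\mc P(\beta, C_Z)$.
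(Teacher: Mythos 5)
Your decomposition into $A_n$, $B_n$, $C_n$ is exactly the paper's decomposition \eqref{eq:decomposition} (with $A_n=\sqrt n\, I_1$, $B_n=\sqrt n\, I_3$, $C_n=\sqrt n\, I_2$), and your treatment of $A_n$ and $C_n$ via Lemma \ref{lem:mean:estimation:rates:asbefore} and the two endpoints of $H_n$ is the same as in the paper. For the process term, however, you take a genuinely different route. The paper applies Pollard's functional CLT for triangular arrays directly to $X_{ni}(\bs x)=n^{-1/2}\sum_{\bs j}\wjsb xhs Z_i(\bs x_{\bs j})$; the only place the approximation of the discrete operator by $\dels$ enters is in verifying the \emph{pointwise/uniform convergence of covariances} (conditions ii) and v)), for which the pathwise Taylor bound \eqref{eq:taylorprocess} of order $h^{\min(1,\beta-\abs{\bs s})}$ suffices without any rate condition against $\sqrt n$; manageability is inherited from the proof of Lemma \ref{lem:mean:estimation:rates:new} i). You instead couple $B_n$ to $\dels(\sqrt n\,\bar Z_n)$ in sup-norm and then invoke an i.i.d.\ functional CLT (Jain--Marcus type, justified by the H\"older modulus of $\dels Z_i$). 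Your route buys a cleaner identification of the limit and an explicit discretization error, at the price of the coupling step $\norm{B_n-\dels(\sqrt n\,\bar Z_n)}_\infty=\op 1$, which is precisely the delicate point: you correctly note that the triangle inequality only gives $\Op{\sqrt n\,h^{\beta'-\abs{\bs s}}}$, and your remedy (a maximal inequality for the centered i.i.d.\ summands $L_hZ_i-\dels Z_i$ at envelope scale $M_i\,h^{\beta'-\abs{\bs s}}/\sqrt n$) is the right one and can be executed with the paper's own Lemmas \ref{lem:maximalInequalityExpection} and \ref{lem:manageability:lipschitz}. But be aware that verifying the packing condition \eqref{eq:measucondprocess} relative to this \emph{shrunken} envelope is not automatic: it needs the same two-case increment analysis ($\epsilon>h$ versus $\epsilon\le h$) as in the proof of Lemma \ref{lem:mean:estimation:rates:new}, including control of the intermediate term analogous to \eqref{eq:hilfboundprocess}; a generic appeal to ``Dudley- or Talagrand-type inequalities'' does not by itself produce the factor $h^{\beta'-\abs{\bs s}}$ in the bound. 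With that verification written out, your argument is complete; the paper's version avoids the coupling altogether at the cost of checking Pollard's five conditions.
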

For the proof we check the conditions of the functional central limit Theorem in \citet[Theorem (10.6)]{pollard1990empirical}	for the triangular array of processes $(X_{ni})_{n \in \N, 1 \leq i \leq n}$ given by \eqref{eq:X:process:deriv}. While this is broadly analogous to the case of the mean function $\mu$ in \citet{berger2023dense}, some extensions are required and we provide the proof in Section \ref{sec:proof:theorem:rates}.


\subsection{Assessing the smoothness of the sample paths}\label{sec:covdersmoothpaths}

The central limit theorem, Theorem \ref{thm:asymptotic:normality} forms the basis for the construction of uniform confidence bands for derivatives, which can e.g.~be used to assess  signs of derivatives and hence monotonicity properties of the function itself. 
To achieve the $\sqrt n$-rate of convergence and for Theorem \ref{thm:asymptotic:normality} to be applicable, we require smooth sample paths. 

A necessary condition for differentiable sample paths is the differentiability of the covariance kernel $\Gamma$ on the diagonal \citep[Theorem 8]{da2023sample}.  Restricting to the one-dimensional case $d=1$, consider the upper and lower triangular domains $T_{u}=\{(x,y) \in [0,1]^2 \mid x \leq y\}$ and $T_{l}=\{(x,y) \in [0,1]^2 \mid x \geq y\}$ of $[0,1]^2$, and let  $\partial^{(0,1)}_{u/l}\Gamma$ and $\partial^{(1,0)}_{u/l}\Gamma$ denote the partial derivatives of the covariance kernel $\Gamma(x,y) = \expec[Z_i(x)\, Z_i(y)]$ when restricted to $T_u$ respectively $T_l$, assuming these exist. From the symmetry $\Gamma(x,y) = \Gamma(y,x)$ it follows that we always have (even if $\Gamma$ is not differentiable on the diagonal) that $\partial^{(0,1)}_{u}\Gamma(x,x) = \partial^{(1,0)}_{l}\Gamma(x,x) $. Differentiability on the diagonal requires $\partial^{(1,0)}_{l}\Gamma(x,x) = \partial^{(1,0)}_{u}\Gamma(x,x) $, or 
$$ \partial^{(0,1)}_{u}\Gamma(x,x) = \partial^{(1,0)}_{u}\Gamma(x,x). $$
The method based on local polynomials proposed in \citet{berger2024optimal} to estimate $\Gamma_{|T_u}$ also gives estimates of the partial derivatives $\partial^{(0,1)}_u\Gamma$ and $\partial^{(1,0)}_u\Gamma$. The methods are from the R-package \href{https://github.com/mbrgr/biLocPol}{biLocPol}.


While a formal test for equality of these functions is beyond the scope of the present paper, the estimates may be compared to assess their equality and hence the differentiability of the sample paths.

\subsection{Local polynomial estimators}\label{sec:local_polynomial_estimator}

We briefly discuss local polynomial estimators of partial derivatives. Based on the analysis in \citet[Section 4]{berger2023dense} we show that in case of design densities and mild assumptions on the kernel, the weights satisfy the properties in Assumption \ref{ass:weights}. 

Let $N_{l,d}\defeq {{d+l}\choose{d}}$ and $ \bs \psi_l \colon \{1, \ldots, N_{l,d-1}\} \to \{\bs r \in \{0,1,\ldots, l\}^d \mid |\bs r| = l\} $ be an enumeration of the set $\{\bs r \in \{0,1,\ldots,l\}^d \mid |\bs r | = l\}$ and let $U_m\colon \R^d \to \R^{N_{m,d}}$ be defined as 
$$ U_m(\bs u)\defeq \big(1, P_{\bs \psi_1(1)}(\bs u), \ldots,P_{\bs \psi_{1}(d)}(\bs u), \ldots, P_{\bs \psi_m(1)}(\bs u), \ldots, P_{\bs \psi_m(N_{m,d-1})}(\bs u)\big)$$
for the monomial $P_{\bs \psi_l(j)}(\bs u)\defeq \bs u^{\bs \psi_l(j)}/\bs \psi_l(j)!$, for all $ j=1,\ldots, N_{l,d-1}$ and $ l= 1, \ldots, m$. As a result of the properties of the binomial coefficient we note that $U_m(\bs u) \in \R^{N_{m,d}}$.

Choose a Lipschitz-continuous \textit{kernel function} $K$ with support in $[-1,1]^d$ which satisfies 
\begin{align}
	K_{\min}\one_{[-\bs \Delta, \bs \Delta]}(\bs u) \leq K(\bs u) \leq K_{\max} \qquad \bs u \in \R^d \,,\label{eq:kernel}
\end{align}
for positive constants $\Delta, K_{\min}, K_{\max}>0$, where  $\bs \Delta = (\Delta,\ldots, \Delta)^\top \in \R^d$.
For a \textit{bandwidth} $h >0$, set $K_{h}(\bs x) \defeq K(\bs x/h)$ and $U_{m,h}(\bs x) = U_m(\bs x/h)$.  Given  $m \in \N$ define
\begin{align} \label{eq:locpol} 
	\widehat{\bs \vartheta}_{n,\bs p,h,m}(\bs x) \defeq {\displaystyle\argmin_{\bs \vartheta \in \R^{N_{m,d}}}} \sum\limits_{\bs j=\bs 1}^{\bs p} \big(\oY{ \bs j} - \bs \vartheta^\top\, U_{m,h}(\bs x_{\bs j}-\bs x) \big)^2 K_{h}(\bs x_{\bs j}-\bs x) \,, \quad \bs x \in[0,1]^d\,.
\end{align}
Then for $m \geq \abs{\bs s}$ the \textit{local polynomial estimator of order $m$} of the $\bs s^{\mathrm{th}}$ partial derivative $\dels \mu$ of $\mu$ at $\bs x$ is given by
\begin{align}\label{eq:lp:mu:s}
	\hat \mu_{n,\bs p,h}^{(\bs s), m}(\bs x) =  \skpb{\widehat{\bs \vartheta}_{n,\bs p,h,m}(\bs x)}{\dels U_m(\bs 0)} h^{-\abs{\bs s}}\,. 
\end{align}
where $\dels U_m(\bs 0)$ is the unique vector with a $1$ at the coordinate corresponding to the monomial $\bs u^{\bs s}/\bs s!$.

The proof of the following lemma is provided in Section \ref{app:additional_proofs}.
\begin{lemma}\label{lemma:locpol:weights}
Suppose that the kernel $K$ satisfies \eqref{eq:kernel}. Under the design Assumption \ref{ass:designdensity},  
there exist $p_0 \in \N$ and $h_0, c>0$ such that for $\bs p = (p_1,\ldots,p_d)^\top$ with $\bs p_{\min} \geq p_0$ and $h\in(c/\bs p_{\min},h_0]$, the estimator in \eqref{eq:lp:mu:s} is uniquely defined by \eqref{eq:lp:mu:s} and a linear estimator with weights satisfying Assumption \ref{ass:weights} with $\zeta = m$ for all $\bs s = (s_1, \ldots, s_d)$ with $\abs{\bs s} \leq m$.
\end{lemma}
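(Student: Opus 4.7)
The strategy is to write the local polynomial weights in closed form via the normal equations of the weighted least-squares problem \eqref{eq:locpol} and then verify the four properties of Assumption \ref{ass:weights} in turn. The heavy lifting for the case $\bs s=\bs 0$ has already been carried out in \citet[Section~4]{berger2023dense}; what is new is only the bookkeeping of the extra factor $h^{-\abs{\bs s}}$ and of the selector vector $\dels U_m(\bs 0)$.

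First I would rewrite the normal equations of \eqref{eq:locpol} as $B_{n,\bs p,h}(\bs x)\,\widehat{\bs\vartheta}_{n,\bs p,h,m}(\bs x) = \frac{1}{\bs p^{\bs 1}}\sum_{\bs j=\bs 1}^{\bs p} K_h(\bs x_{\bs j}-\bs x)\,U_{m,h}(\bs x_{\bs j}-\bs x)\,\bar Y_{\bs j}$, where
$$B_{n,\bs p,h}(\bs x)\defeq \frac{1}{\bs p^{\bs 1}}\sum_{\bs j=\bs 1}^{\bs p} K_h(\bs x_{\bs j}-\bs x)\,U_{m,h}(\bs x_{\bs j}-\bs x)\,U_{m,h}(\bs x_{\bs j}-\bs x)^\top.$$
Combined with \eqref{eq:lp:mu:s} this identifies the weights as
$$\wjsb xhs \ =\ \frac{1}{h^{\abs{\bs s}}\,\bs p^{\bs 1}}\,\dels U_m(\bs 0)^\top\, B_{n,\bs p,h}(\bs x)^{-1}\, U_{m,h}(\bs x_{\bs j}-\bs x)\, K_h(\bs x_{\bs j}-\bs x).$$
The localisation property \ref{ass:weights:equal:0} is now immediate from the support condition on $K$ in \eqref{eq:kernel}. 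The polynomial-reproduction property \ref{ass:weights:pol:rep} with $\zeta=m$ follows from the fact that for any polynomial $Q$ of degree $\leq m$, feeding $\{Q(\bs x_{\bs j})\}_{\bs j}$ into \eqref{eq:locpol} returns the exact Taylor coefficients of $Q$ at $\bs x$; contracting these against $\dels U_m(\bs 0)/h^{\abs{\bs s}}$ then returns $\dels Q(\bs x)$, which in the monomial basis $(\bs x_{\bs j}-\bs x)^{\bs r}$ is precisely \eqref{eq:weights:rep:pol:analytic}.

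The analytic core is the control of $B_{n,\bs p,h}(\bs x)$. Under Assumption \ref{ass:designdensity}, the Riemann-sum arguments of \citet[Section~4]{berger2023dense} give, uniformly in $\bs x\in[0,1]^d$ and $h\in(c/\bs p_{\min},h_0]$,
$$B_{n,\bs p,h}(\bs x)\ =\ h^d\,\Big(\prod_{k=1}^d f_k(x_k)\Big)\,M_K \ +\ R_{n,\bs p,h}(\bs x),\qquad M_K \defeq \int_{[-1,1]^d} K(\bs v)\,U_m(\bs v)\,U_m(\bs v)^\top\,\dx\bs v,$$
with a remainder $R_{n,\bs p,h}(\bs x)$ of operator norm $o(h^d)$. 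The lower bound $K\geq K_{\min}\one_{[-\bs\Delta,\bs\Delta]}$ from \eqref{eq:kernel} and the linear independence of the monomials in $U_m$ make $M_K$ positive definite; together with $\min_k f_k\geq \fkmin>0$ this shows $B_{n,\bs p,h}(\bs x)^{-1}$ exists and satisfies $\|B_{n,\bs p,h}(\bs x)^{-1}\|_{\mathrm{op}}\leq C\,h^{-d}$ uniformly. Since $\|U_{m,h}(\bs x_{\bs j}-\bs x)\|\leq C$ and $|K_h|\leq K_{\max}$ on the support, the closed-form expression above gives \ref{ass:weights:max} with the claimed constant $\Cmax$.

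Finally for the Lipschitz bound \ref{ass:weights:lipschitz}, the three building blocks $K_h(\bs x_{\bs j}-\cdot)$, $U_{m,h}(\bs x_{\bs j}-\cdot)$ and $B_{n,\bs p,h}(\cdot)^{-1}$ are Lipschitz on $[0,1]^d$ with constants of respective order $h^{-1}$, $h^{-1}$ and $h^{-d-1}$; the last of these uses the identity $A^{-1}-\tilde A^{-1}=A^{-1}(\tilde A-A)\tilde A^{-1}$ together with the Lipschitz character of $K$, $U_m$ and the $f_k$. The product rule applied to the closed-form expression for $\wjsb xhs$ then yields \ref{ass:weights:lipschitz}; the factor $(\norm{\bs x-\bs y}_\infty/h\vee 1)$ appears because for $\norm{\bs x-\bs y}_\infty>h$ the two weights sit on disjoint kernel supports and the bound reduces to the sum of their individual sup-norms. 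The delicate point throughout is ensuring the remainder $R_{n,\bs p,h}(\bs x)$ stays $o(h^d)$ uniformly for $h$ as small as $c/\bs p_{\min}$; this is precisely where the cutoff $h>c/\bs p_{\min}$ and the choice of $c$ large enough enter, and where we rely directly on the Riemann-sum estimates of \citet[Section~4]{berger2023dense}, which I expect to be the main obstacle to import cleanly but which carry over without structural change to the derivative setting.
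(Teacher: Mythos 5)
Your proposal is correct and follows essentially the same route as the paper: it writes the weights in the same closed form $\wjsb xhs = (\bs p^{\bs 1}h^{d+\abs{\bs s}})^{-1}\,\dels U_m(\bs 0)^\top B_{\bs p,h}^{-1}(\bs x)\,U_{m,h}(\bs x_{\bs j}-\bs x)K_h(\bs x_{\bs j}-\bs x)$, establishes uniform invertibility of $B_{\bs p,h}(\bs x)$ for $h>c/\bs p_{\min}$ via the design-density/Riemann-sum estimates of \citet{berger2023dense} (the paper cites their Lemma~8 for the uniform eigenvalue lower bound, which is what your expansion of $B$ delivers), verifies \ref{ass:weights:pol:rep} by exact reproduction of the Taylor coefficients of a polynomial, and reduces \ref{ass:weights:equal:0}--\ref{ass:weights:lipschitz} to the $\bs s=\bs 0$ case up to the extra factor $h^{-\abs{\bs s}}$ and the unit selector vector $\dels U_m(\bs 0)$. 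The only cosmetic difference is that you phrase the control of $B$ as an asymptotic expansion with an $o(h^d)$ remainder rather than as a direct uniform eigenvalue bound, but as you note yourself this is exactly where the threshold $c/\bs p_{\min}$ enters, and the two formulations are interchangeable.
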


\section{Simulation and real data examples}

All simulations and the real data exmaples can be found in the \href{https://github.com/mbrgr/smooth-rough-paths-fda-derivative-estimation}{GitHub Repository: smooth-rough-paths-fda-derivative-estimation}.

\subsection{Simulation}\label{sec:sim}

We conduct a simulation in model \eqref{eq:model} in the univariate setting with equidistant design points.   
As mean function we consider 
\begin{equation}
    \mu(x) = \sin\big(3\pi(2x-1)\big)\exp\big(-2(2x-1)^2\big), \qquad x\in [0,1],\label{eq:sim_mu0}
\end{equation}
for which
\begin{align*}
	\mu^\prime (x) & = \Big(6\pi\,\cos\big(3\pi(2x-1)\big) - 4\,(2x-1) \,\sin\big(3\pi\,(2x-1)\big) \Big)\exp\big( -2(2x-1)^2\big)\,. 
\end{align*}

\begin{figure}
	\begin{subfigure}{0.49\linewidth}
		\includegraphics[width=\linewidth]{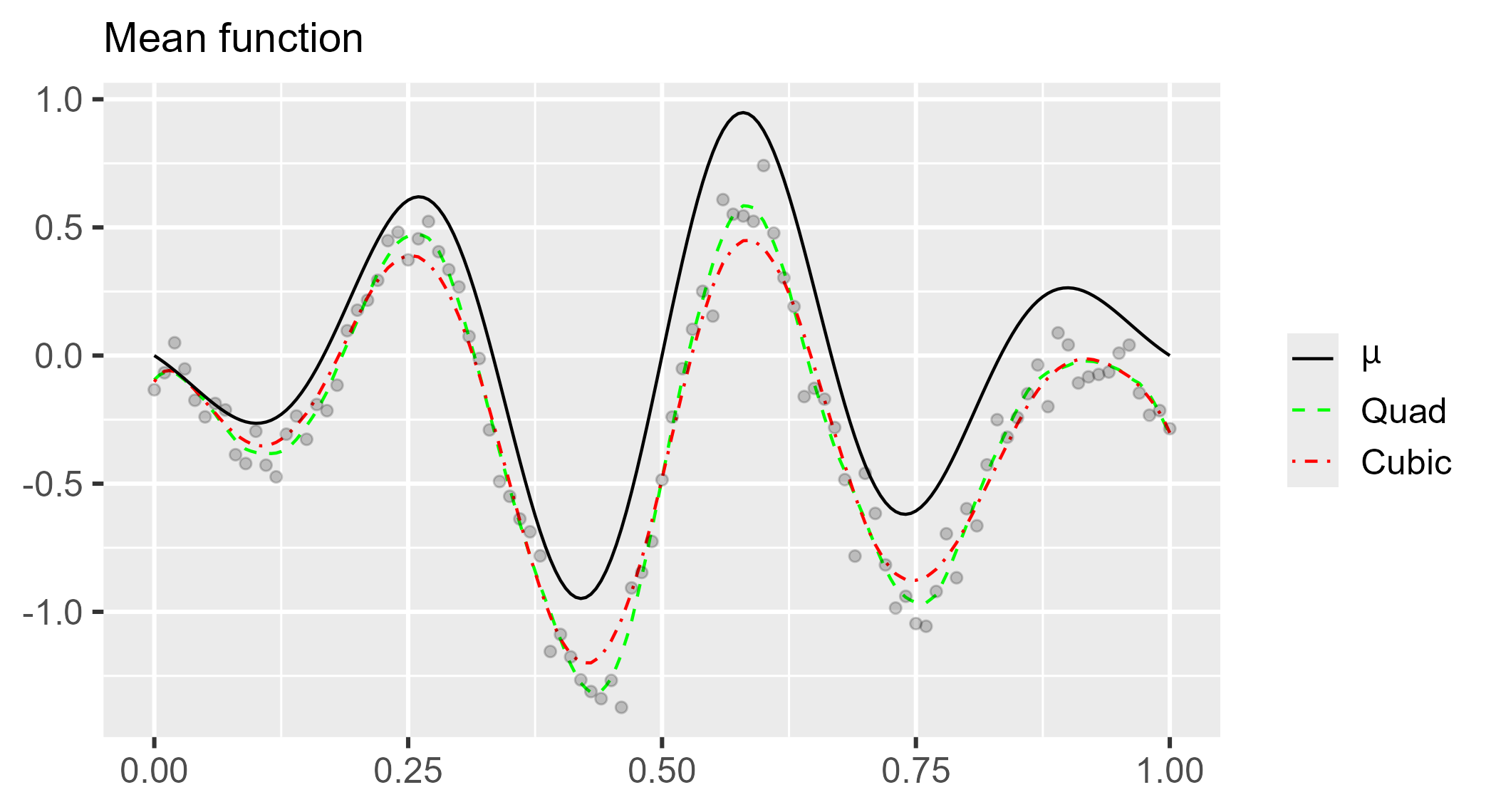}
		\caption{Estimation of the mean function itself.}
		\label{fig:mu_illustration}
	\end{subfigure}
	\hfill
	\begin{subfigure}{0.49\linewidth}
		\includegraphics[width=\linewidth]{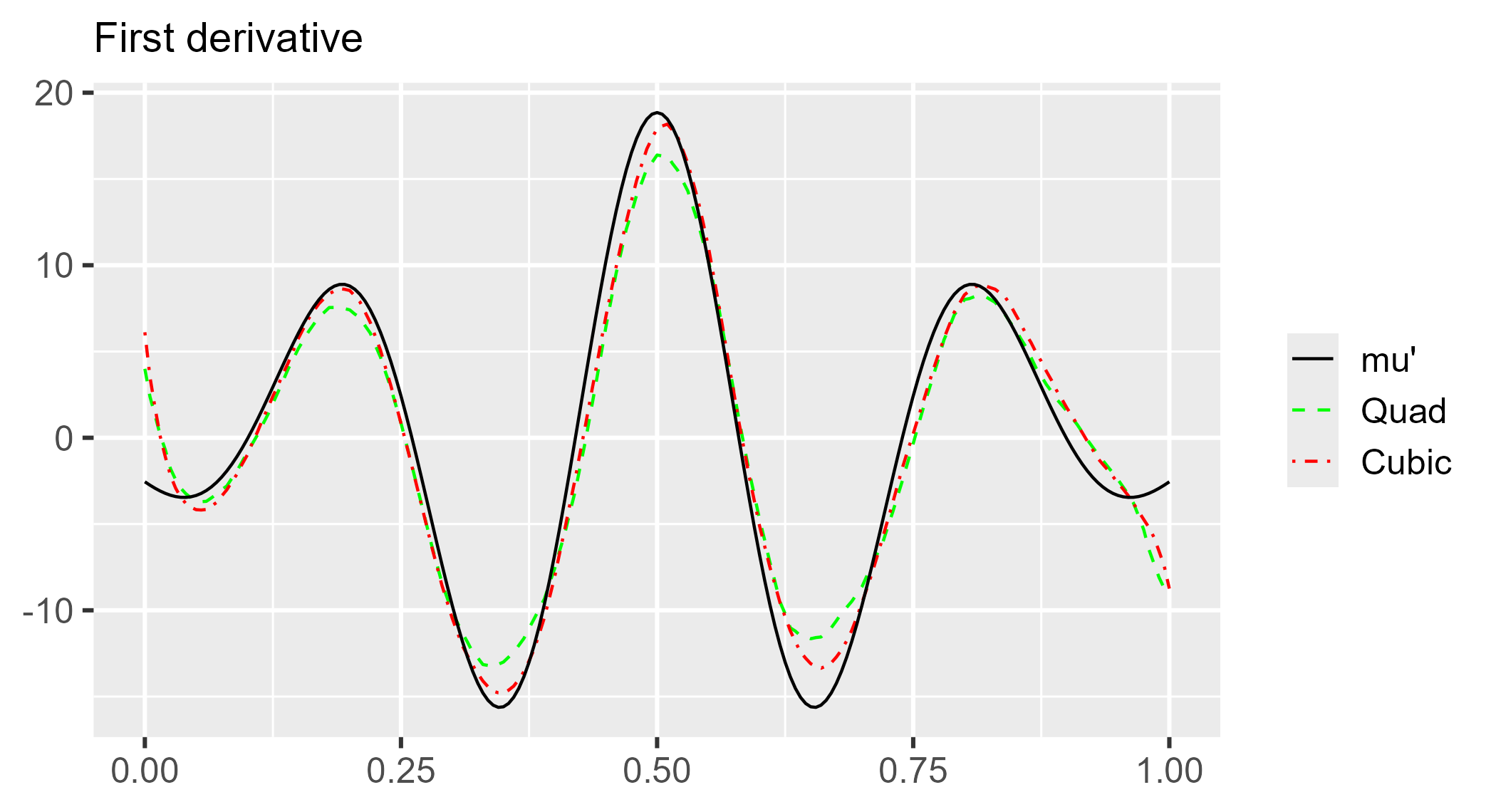}
		\caption{Estimation of the first derivative.}
		\label{fig:mu_prime_illustration}
	\end{subfigure}%
	\caption{The functions $\mu$ (a), its first derivative $\mu^\prime$ (b), the local quadratic estimator (green, lined), the local cubic estimator (red, dot-line) and the observation (black dots). The parameters here are $n = 10$, $p = 101$ and independent errors with $\epsilon_i \sim \mc N(0, 0.09)$.}
	\label{fig:mu_and_mu_prime}
\end{figure}


Figure \ref{fig:mu_and_mu_prime} contains plots of $\mu$ (left figure) and $\mu^\prime$ (right figure), together with local quadratic and local cubic estimates of both $\mu$ and $\mu^\prime$, based on $p = 101$ design points, $n=10$,  $\epsilon_i \sim \mc N(0, 0.09)$ and standard Brownian motion as processes. 
We note that overall, both estimates perform quite reasonably; note the distinct scales of the plots. However the estimators of the derivative have a finite-sample  boundary effect, which does not occur in the estimation theory. This finite-sample effect has been previously observed in nonparametric derivative estimation with local polynomials e.g.~in \citet[Figures 2, 4]{de2013derivative} or in \citet{newell2007comparative}.

Therefore in the following we compute the sup-norm error only over the interval $[h, 1-h]$, where $h$ denotes the bandwidth used in the local polynomial estimator. Corresponding,  qualitatively similar simulation results over the full interval $[0,1]$ are given in Section \ref{sec:simfullint}. 

First we investigate the effect of the choice of the bandwidth and the degree of the local polynomial estimator in dependence on $n$ and $p$. We use independent Brownian motions for the processes $Z_i$ and  independent normal  errors with a standard deviation of $0.5$. First we generate $ N = 1000$ repetitions of $n = 600$ random curves observed at $p \in \{115, 175, 275, 400, 550,$ $ 1000\}$ points in the interval $[0,1]$. For each $p$ we compute the sup-norm error for a grid of bandwidths.   

Figure \ref{fig:deriv_bw_comp_quad} contains the sup-norm errors for the local quadratic estimator and Figure \ref{fig:deriv_bw_comp_cubic} for the local cubic estimator. The local cubic estimator has superior performance, which is in line with results of \cite{fan1996local}, and also depends less sensitively on the choice of the bandwidth. For both estimators, the sup-norm error increases if the bandwidth becomes too small (as well as too large). Figures \ref{fig:error_decomp_n600} and \ref{fig:error_decomp_p100} present simulation results for the components of the error decompositions for different $n$ and $p$, computed with an optimal bandwidth as determined by a grid search. The remainder of the setting is as before. Comparing the sup-error for $n = 600$ and $p = 300$ shows that in this situation more observation points could lower the sup-error more effectively than more observations. 


\begin{figure}
	\begin{subfigure}{0.49\linewidth}
		\includegraphics[width=\linewidth]{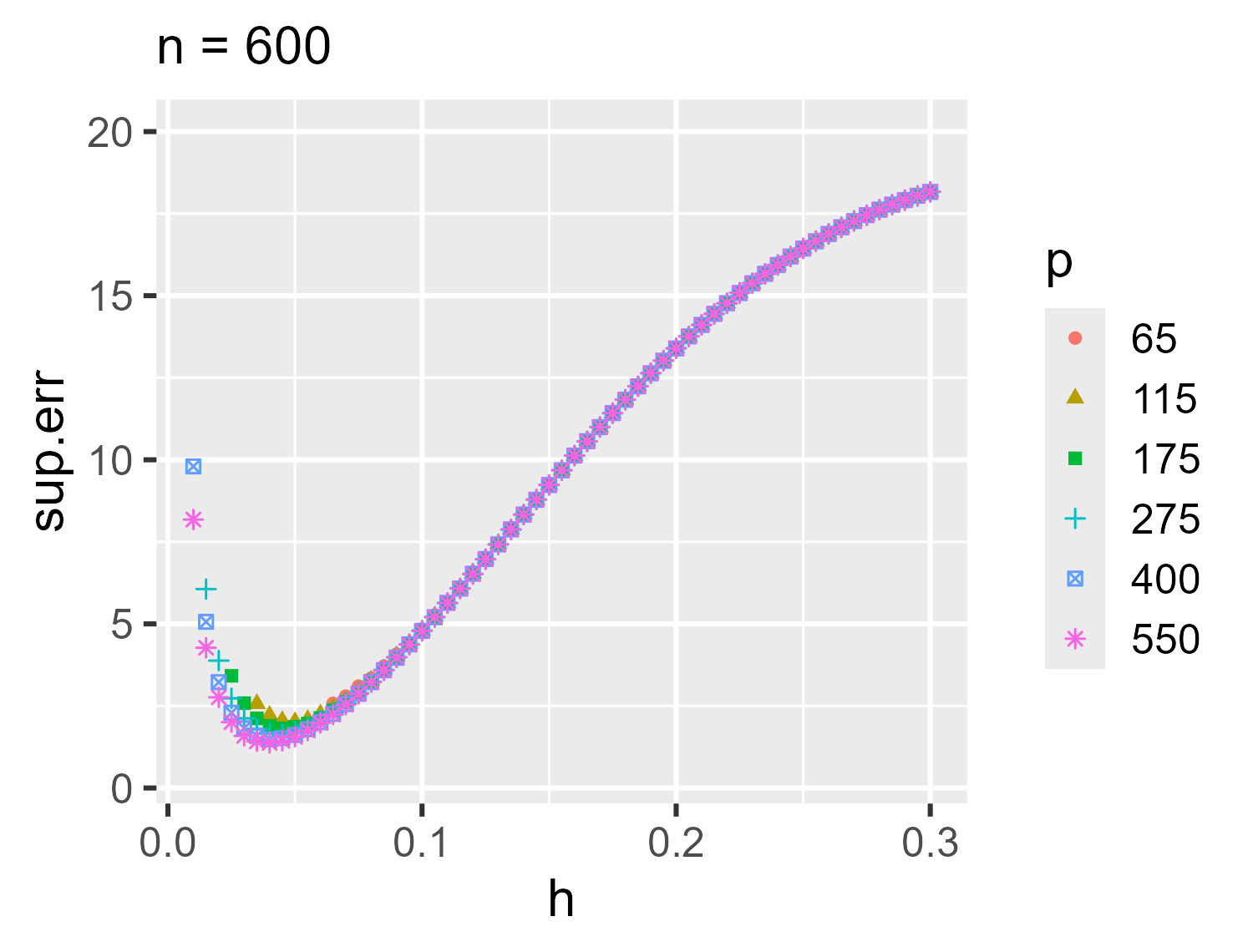}
		\caption{Local quadratic estimator.}
		\label{fig:deriv_bw_comp_quad}
	\end{subfigure}
	\hfill
	\begin{subfigure}{0.49\linewidth}
		\includegraphics[width=\linewidth]{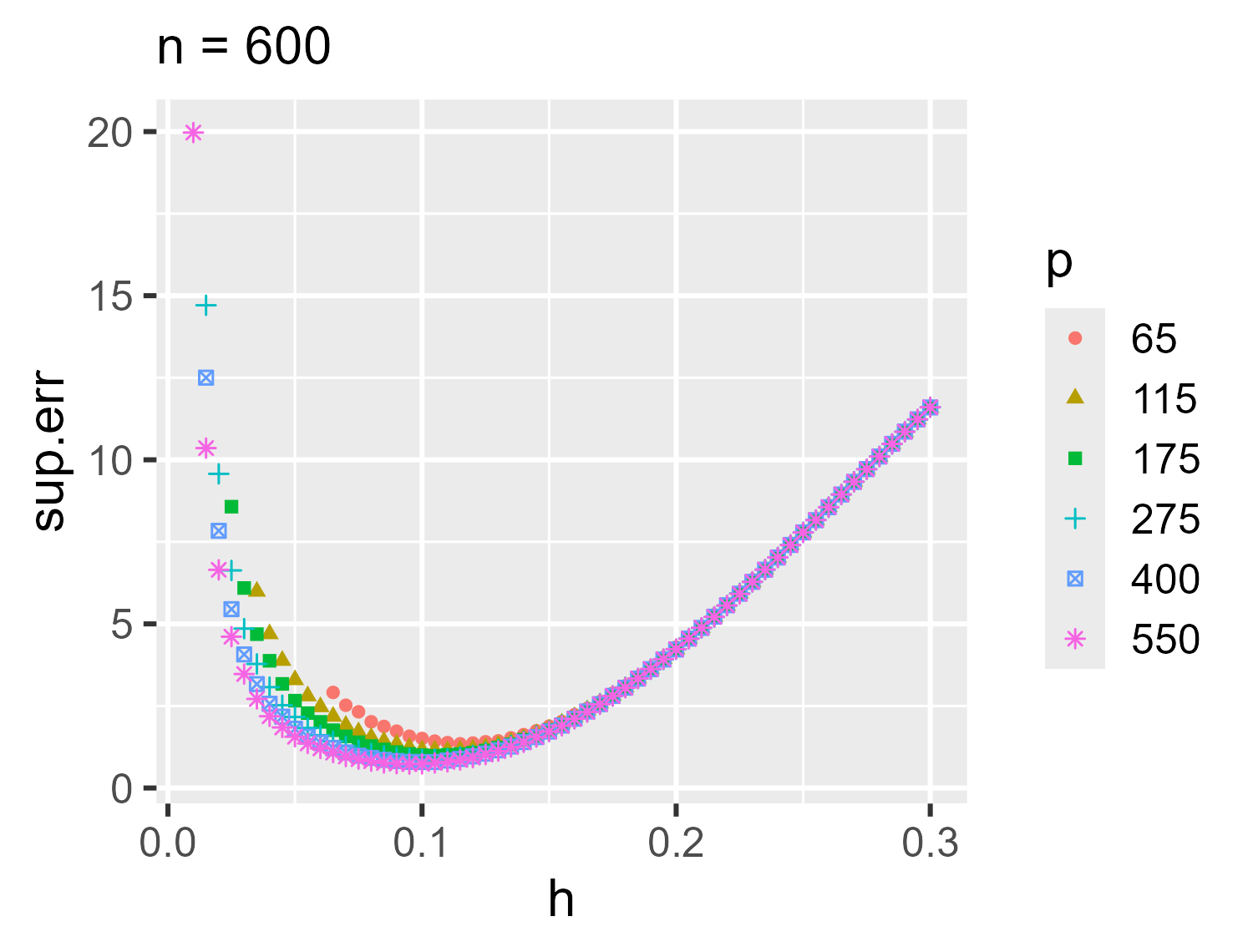}
		\caption{Local cubic estimator.}
		\label{fig:deriv_bw_comp_cubic}
	\end{subfigure}%
	\caption{Bandwidth comparison for the local quadratic (a) and cubic (b) estimator for different $p$ and $n = 600$.}
	\label{fig:deriv_bw_comp}
\end{figure}

\begin{figure}
	\begin{minipage}[b]{.49\linewidth}
		\centering
		\includegraphics[width=\linewidth]{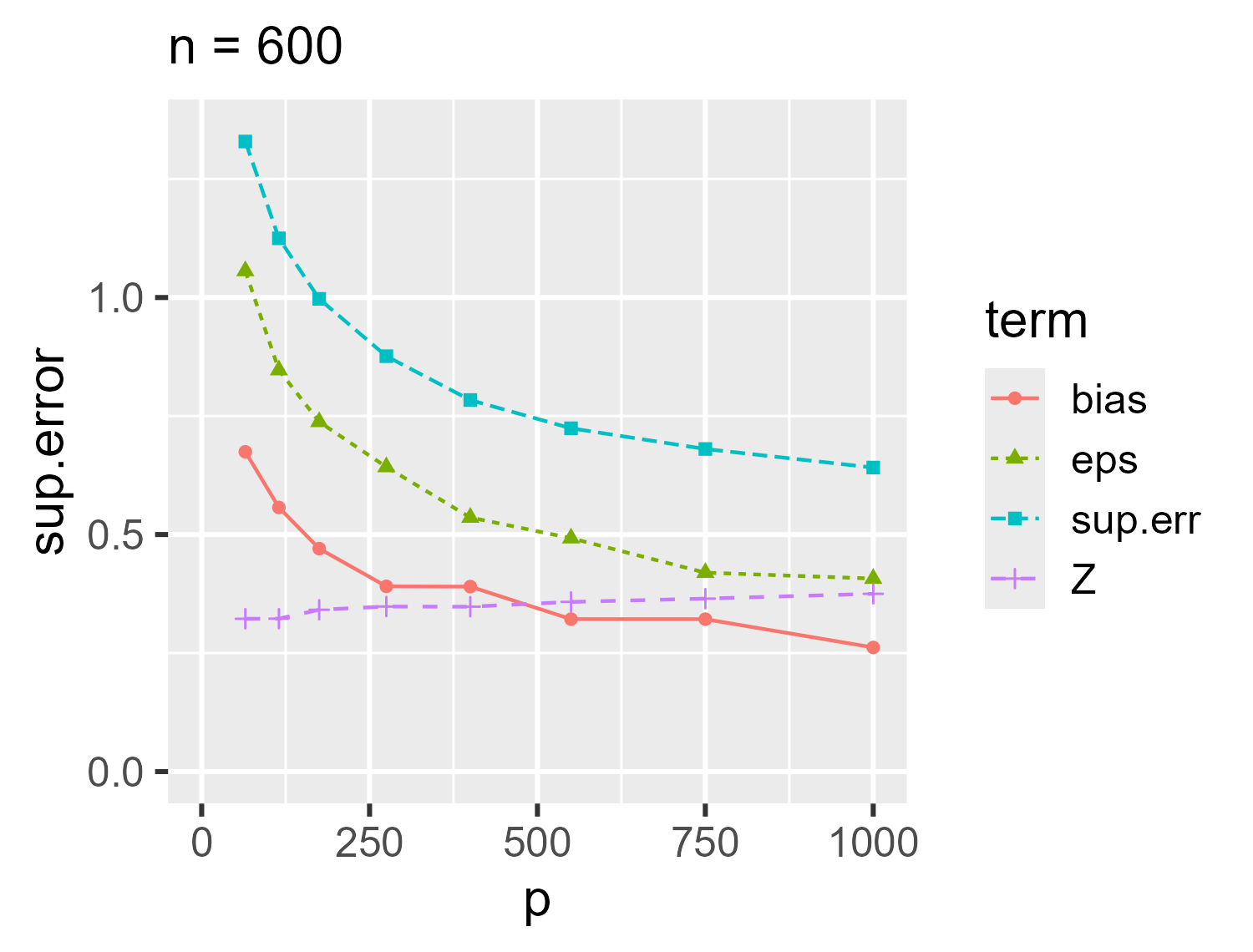}
		\caption{Error decomposition of the local cubic estimator for fixed $n= 600$ and different $p$.}
		\label{fig:error_decomp_n600}
	\end{minipage}
	\hfill
	\begin{minipage}[b]{.49\linewidth}
		\centering
		\includegraphics[width=\linewidth]{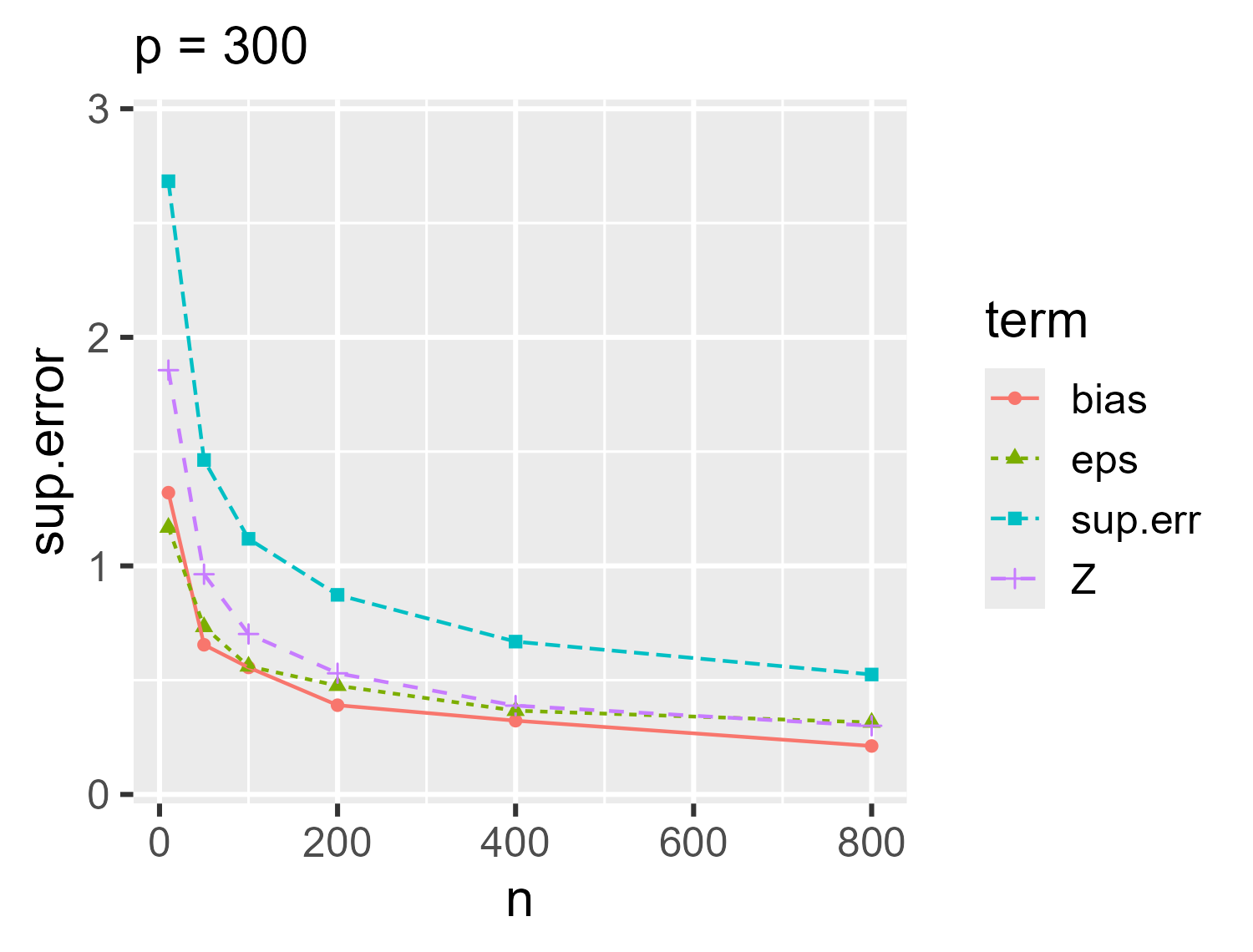}
		\caption{Error decomposition of the local cubic estimator for fixed $p = 300$ and different $n$.}
		\label{fig:error_decomp_p100}
	\end{minipage}
\end{figure}

Finally let us investigate in more detail the influence of smooth and rough stochastic processes in finite samples. 
In addition to the Brownian motion, which has rough paths which are Hölder continuous of  order $\beta < 1/2$, we also consider the smooth processes given by 
\begin{align}
	\tilde Z(x) & \defeq \frac23 \,N_1 \,\sin(\pi\,x) + \frac{\sqrt 8}{3}\,N_2\,\cos(\pi\,x)\,,
\end{align}
where $N_1$ and $N_2$ are independent standard normal distributed random variables. 
For the simulation we use $p = 800$ and $n \in \{10, 20, 40, 80, 160, 240, 480, 800, 1400\}$, and compute  $N = 1000$ realisations of the error terms associated to the stochastic processes, 
$$ \mathrm{I}^{\mathrm{rg}}_n(x) = \frac{1}{n} \sum_{j=1}^p w_{j}^{(1)}(x;h_n) \sum_{i=1}^n Z_i(x_j) \quad \text{and} \quad \mathrm{I}^{\mathrm{sm}}_n(x)\frac{1}{n} \sum_{j=1}^p w_{j}^{(1)}(x;h_n) \sum_{i=1}^n \tilde Z_i(x_j). $$
Our theory, in particular Lemma \ref{lem:mean:estimation:rates:new} implies that $\| \mathrm{I}^{\mathrm{rg}}_n\|_\infty$ should have a rate slower than $1/\sqrt{n\,h_n}$, while $\| \mathrm{I}^{\mathrm{sm}}_n\|_\infty$ should  converge at the $1/\sqrt n$ rate. Table \ref{table:sup_rates_smooth_rough_processes} contains the results, where we used the optimal bandwitdhs for estimation as determined by a grid search. The effect of the smoothness of the paths can be seen in finite samples even though it is not that large (see Figure \ref{fig:error_decomp_n600}).  


\begin{table}
	\centering
	\begin{tabular}{c|ccccccccc}
		$n$ & $10$ & $20$ & $40$ & $ 80$ & $160$ & $240$ & $480$ & $800$ & $1600$ \\
		\hline 
		$\sqrt{nh}\, \mathrm{I}^{\mathrm{rg}}_n$ & $2.690$ & $2.711$ & $2.665$ & $2.657$ & $2.658$ & $2.643$ & $2.644$ & $2.581$ & $2.591$\\
		$\sqrt{n} \,\mathrm{I}^{\mathrm{sm}}_n$ & $3.431$ & $3.595$ & $3.418$ & $3.562$ & $3.466$ & $3.490$ & $3.592$ & $3.690$ & $3.678$\\
        \hline
        $h_n$ &$0.34$ & $0.31$ & $0.28$ & $0.25$ & $0.22$ & $0.19$ & $0.16$ & $0.13$ & $0.1$
	\end{tabular}
	\caption{Supremum norm of the error due to the stochastic processes in the estimation of $\mu_1^\prime$ for smooth and rough processes multiplied with the expected rates by the results of Theorem \ref{thm:mean:upper_bounds}.}
	\label{table:sup_rates_smooth_rough_processes}
\end{table}

\subsection{Real data applications}
\label{sec:real_data_application}

In this section  we revisit the data examples from \citet{berger2023dense}, and complement the analysis with derivative estimation and an investigation of the smoothness of the paths. A data-driven choice of the bandwidth taylored to derivative estimation is rather involved. Therefore we resort to the bandwidth chosen by cross-validation for the mean function. Alternative methods are described e.g.~in \citet{fan1996study}.



\subsubsection*{Application 1: Biomechanics}

We revisit a data set from \citet{liebl2019fast} which contains $n=18$ pairs of torque curves for the right ankle joint in Newton metres (N/m) standardized by the bodyweight (kg) for runners who wear first, extra cushioned running shoes and second and in a second run normal cushioned running shoes. The time of the stance phase (time in which the foot has ground contact) scale is normalized to $t \in [0, 100]$ using affine linear transformations with $p=201$ grid points. See \cite{liebl2019fast} for further details. 


Figure \ref{fig:pressure_curves} shows observations for each of the runners. Pairwise differences together with the averaged observations and a local polynomial estimate of the mean curve are given in Figure \ref{fig:diff_curves_bio}. The bandwidth for the local polynomial estimator is selected by leave-one-curve-out cross validation, resulting in $h = 5.5$. 


\begin{figure}
		\begin{minipage}[b]{.45\linewidth}
		\centering
	\includegraphics[width=\linewidth]{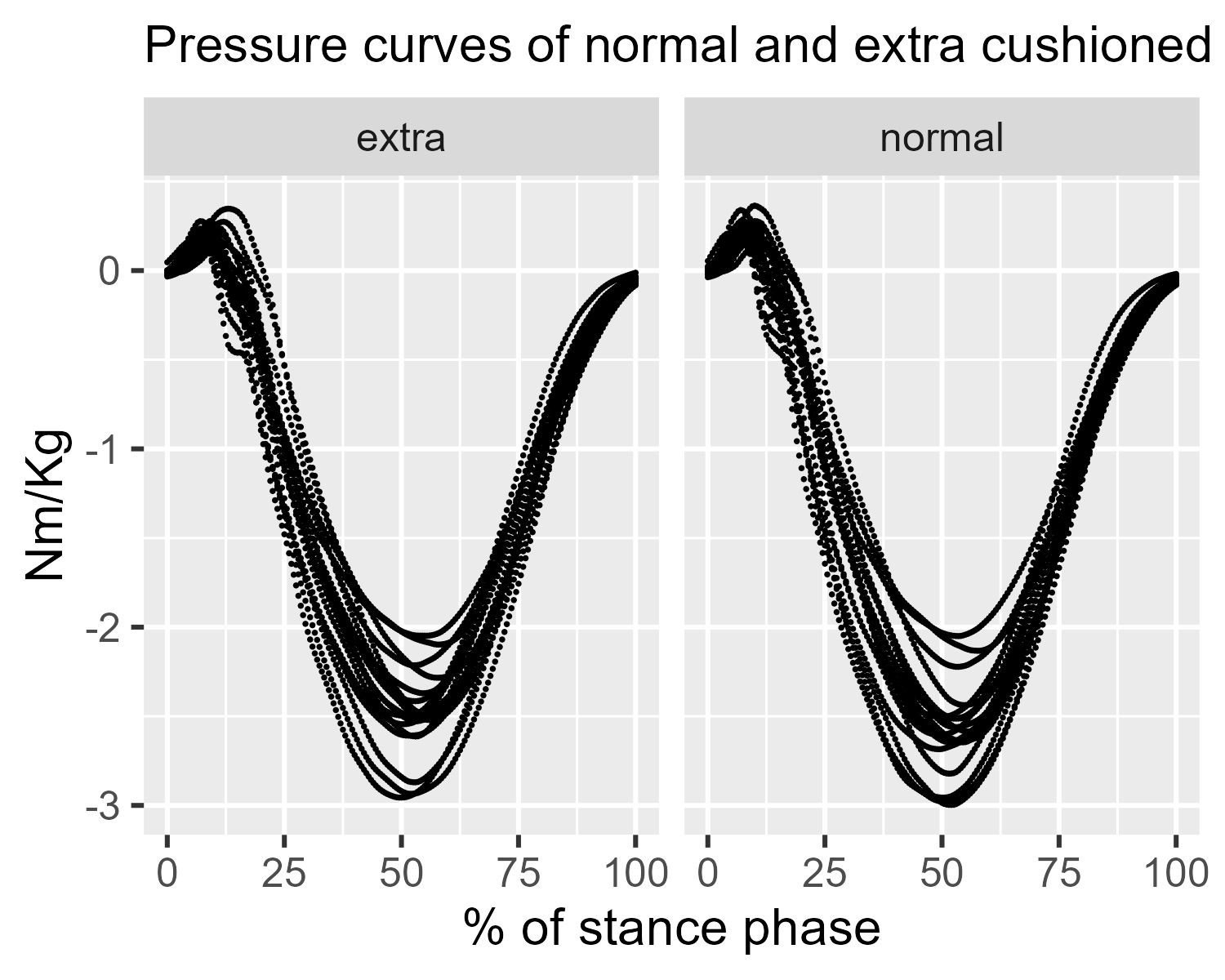}
	\caption{Pressure curves for extra cushioned (left) and the normal (right) shoe.}
	\label{fig:pressure_curves}
	\end{minipage}
	\hfill
	\begin{minipage}[b]{.45\linewidth}
		\centering
		\includegraphics[width=\linewidth]{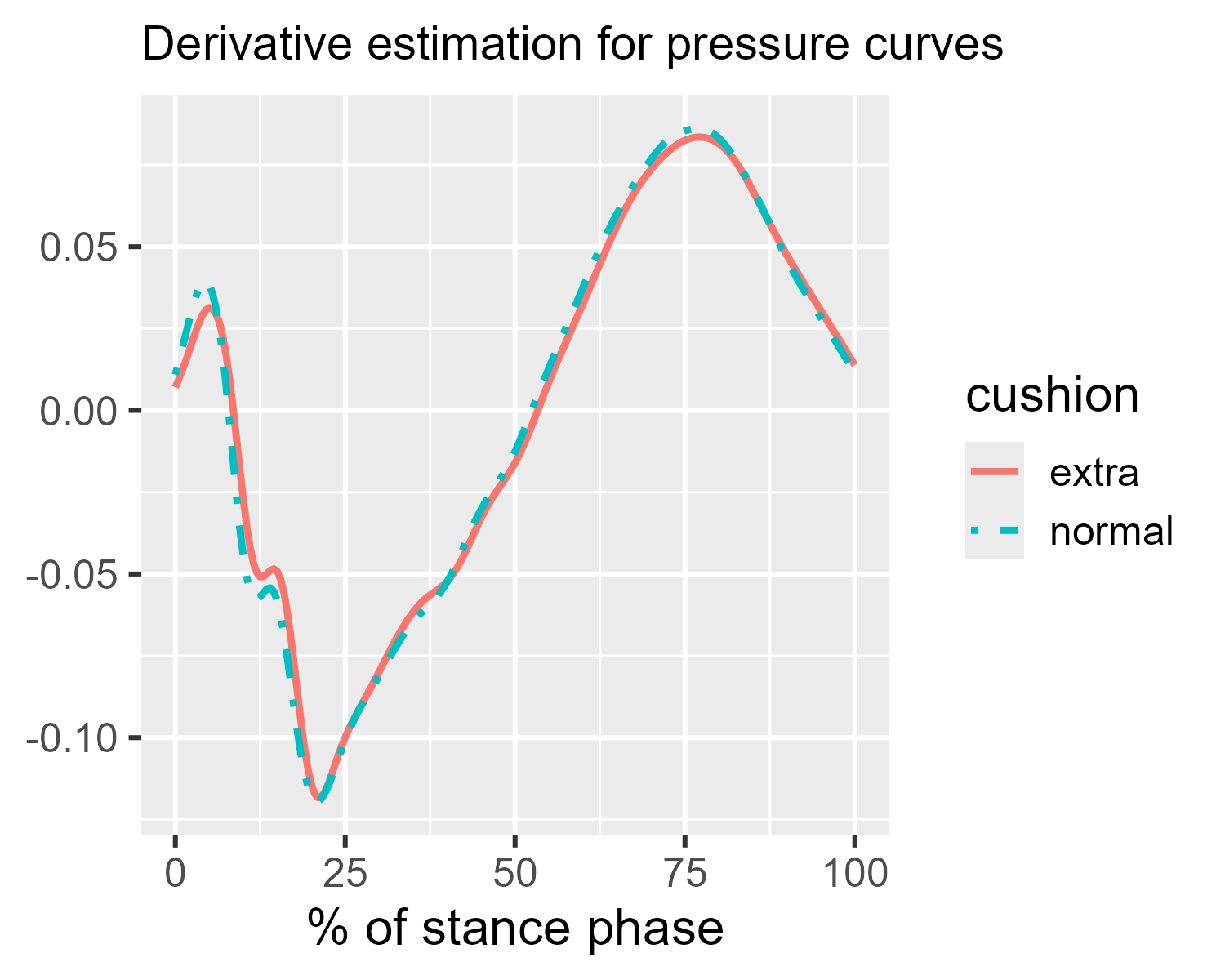}
		\caption{Derivative estimation for the averaged pressure curves per shoe type.}
		\label{fig:bio_deriv_est}
	\end{minipage}
    
\end{figure}

%

\begin{figure}
	\begin{minipage}[b]{.45\linewidth}
		\centering
		\includegraphics[width=\linewidth]{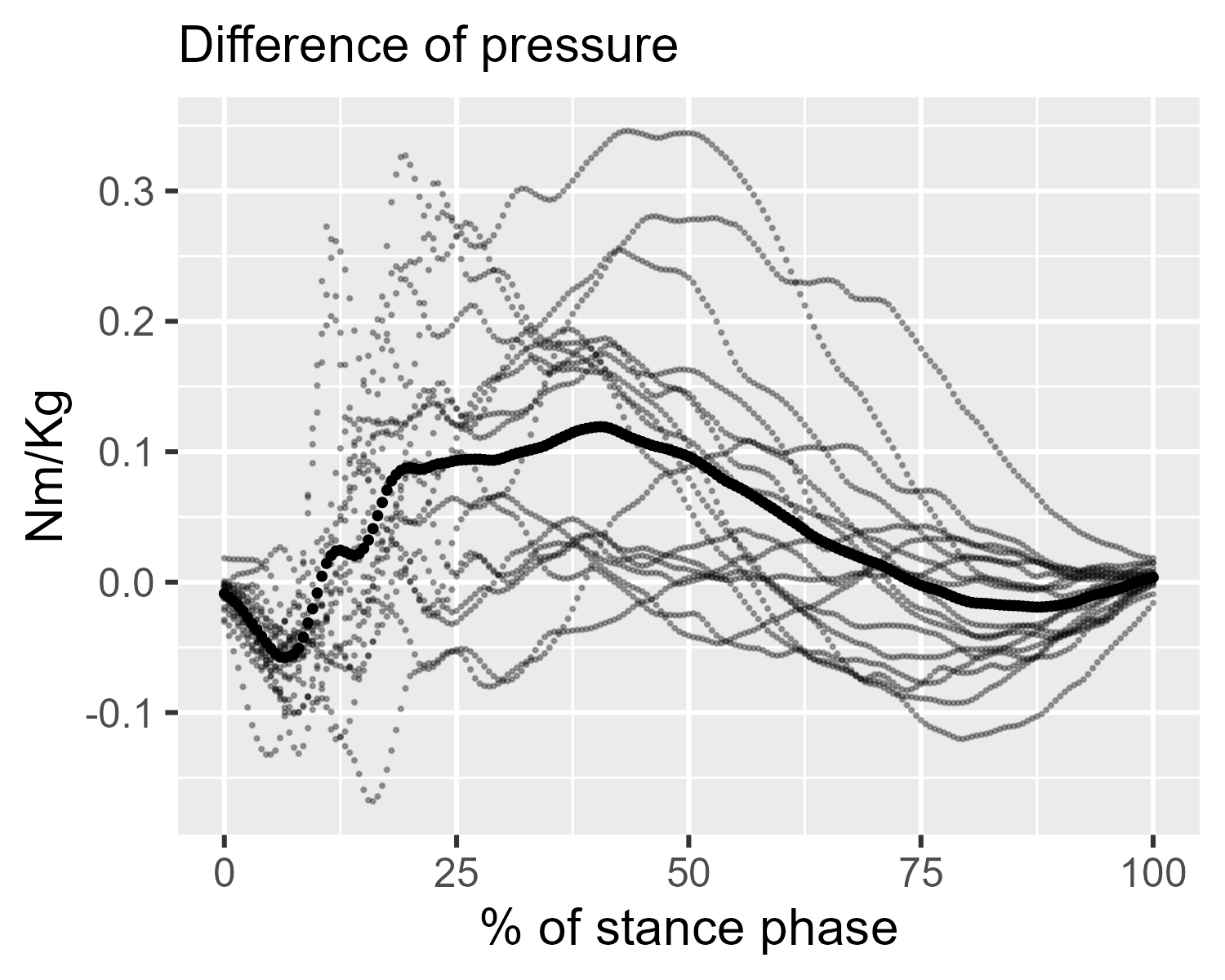}
		\caption{\small Curves of pairwise differences (extra - normal) with  mean function estimate.}
		\label{fig:diff_curves_bio}
	\end{minipage}
    	\hspace{0.5cm}
	\begin{minipage}[b]{.45\linewidth}
		\centering
		\includegraphics[width=\linewidth]{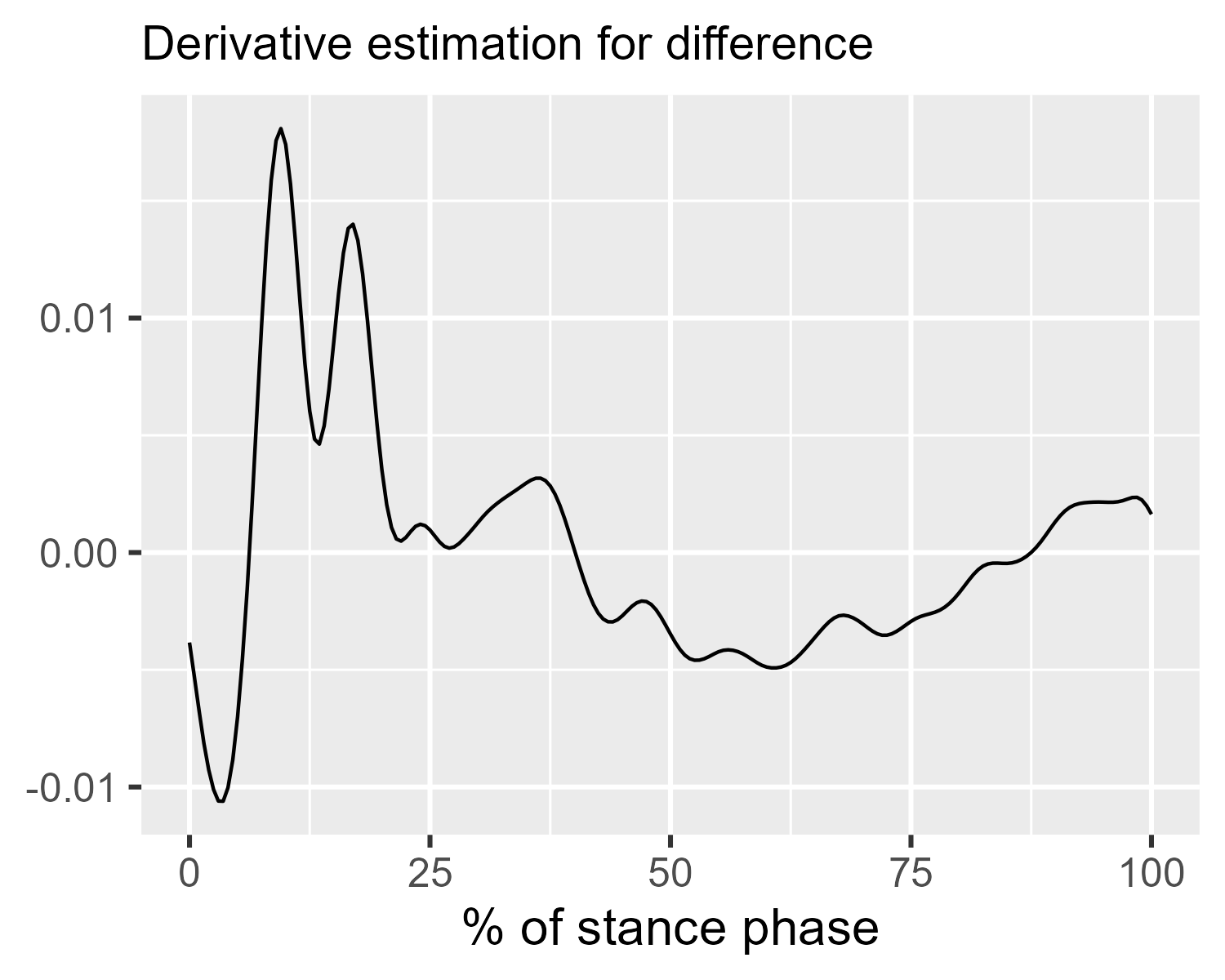}
		\caption{Derivative estimation for the pairwise differences in pressure. }
		\label{fig:bio_deriv_est_diff}
	\end{minipage}	
\end{figure}

Estimating the derivatives of the mean of the pressure curves gives an insight into how fast the ankle is loaded with the pressure. The resulting estimates are contained in Figures \ref{fig:bio_deriv_est} and \ref{fig:bio_deriv_est_diff}. We observe that the non-cushioned shoe has a faster increase when the torque is rising ($\approx [0\%, 10\%]$) and a faster decrease right after that ($\approx [10\%, 20\%]$). A phase-plane plot as suggested in \citet{ramsay1998functional} is given in Figure \ref{fig:bio:phase_plane}. Here we see that the cushioned shoe leads to a slight reduction in the maximal absolute $\mathrm{Nm}/\mathrm{kg}$ but the maximal velocity remains about the same. 

Now it would be desirable to construct confidence bands for the estimate of the derivative of the pairwise difference curve, which could be used to confirm the signs of the derivative. 
Such a construction requires the central limit theorem, Theorem \ref{thm:asymptotic:normality}, for which  the sample paths of the processes need to be sufficiently smooth. To assess this we investigate differentiability of the covariance kernel $\Gamma$ on the diagonal by comparing estimates of $ \partial^{(0,1)}_{u}\Gamma(x,x)$ and $\partial^{(1,0)}_{u}\Gamma(x,x)$ as described in the beginning of this section. The estimates are plotted in Figure \ref{fig:bio_gamm01_gamma10_estimation}. They seem to be quite different, pointing to non-differentiability of the covariance kernel and hence of the sample paths.



\begin{figure}
	\begin{minipage}[b]{.45\linewidth}
		\centering
	    \includegraphics[width = \linewidth]{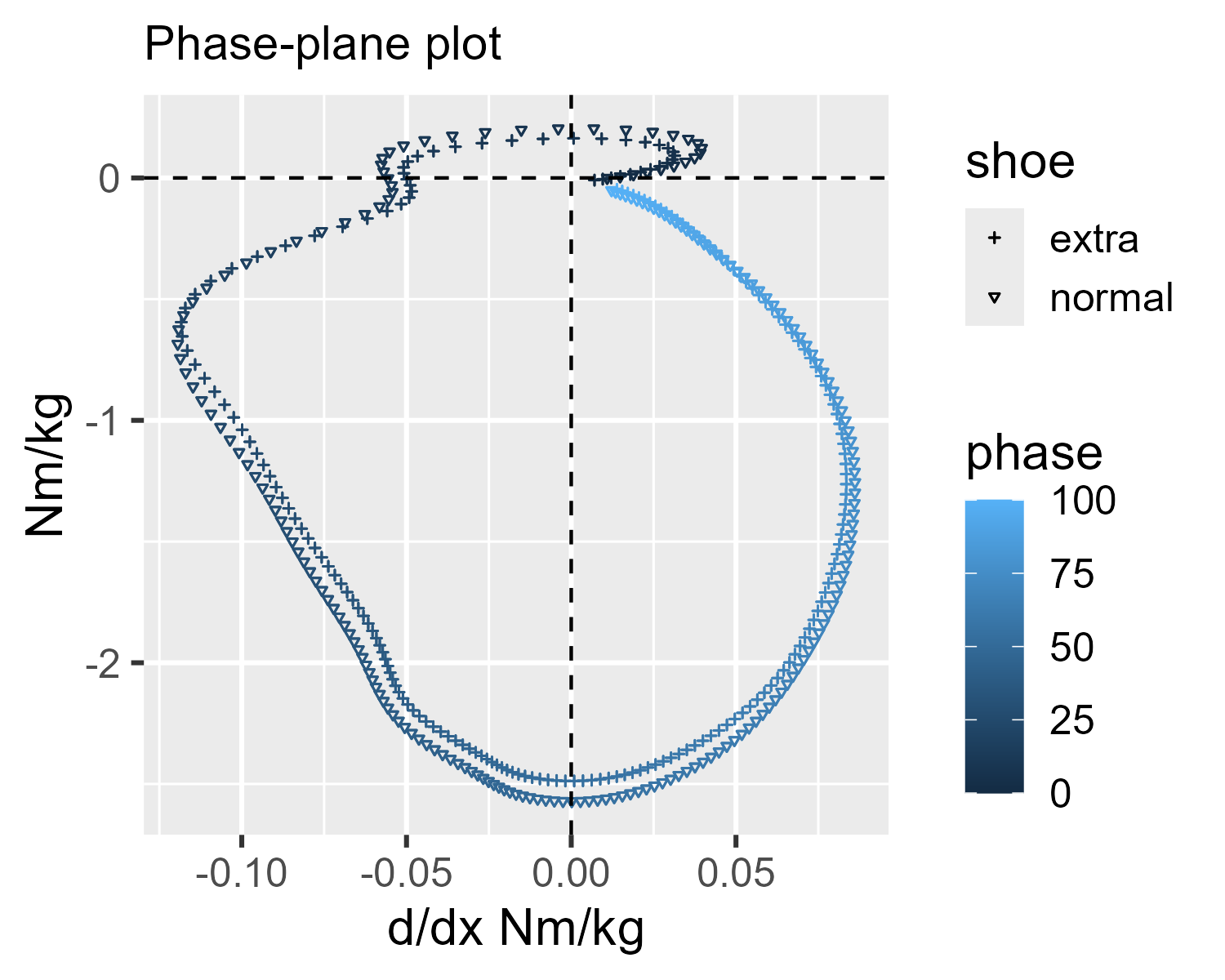}
	   \caption{A \textit{phase-plane plot} of the first derivative against the estimated mean function itself. \\
       \phantom{f} \\
       \phantom{g}}
	\label{fig:bio:phase_plane}
	\end{minipage}
    	\hspace{0.5cm}
	\begin{minipage}[b]{.45\linewidth}
		    \centering
    \includegraphics[width=\linewidth]{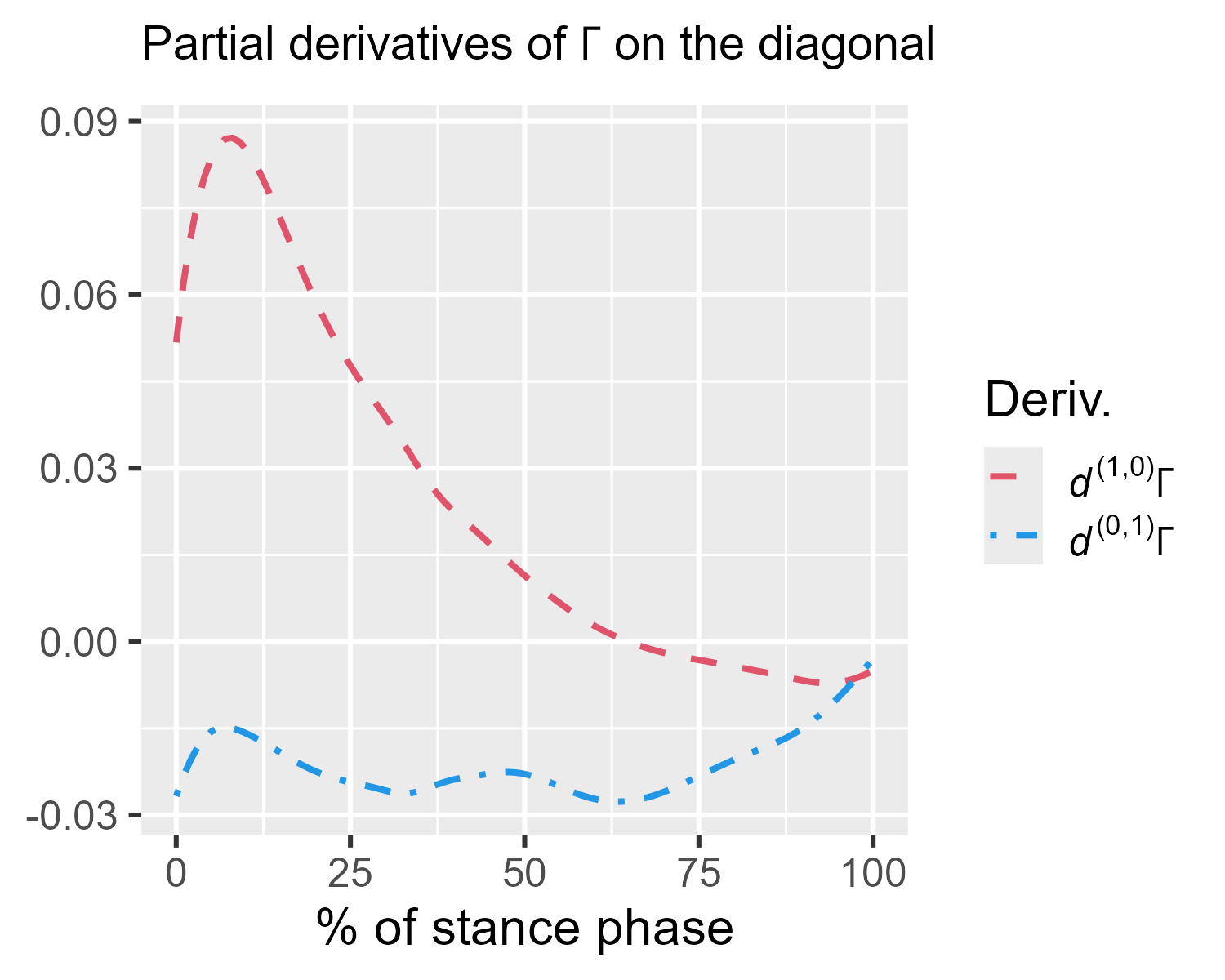}
    \caption{Estimates of $ \partial^{(0,1)}_{u}\Gamma(x,x)$ and $\partial^{(1,0)}_{u}\Gamma(x,x)$ for the  pairwise difference pressure curves of the runners with different shoes, based on the method in  \citet{berger2024optimal}.}
    \label{fig:bio_gamm01_gamma10_estimation}
	\end{minipage}	
\end{figure}


\subsubsection*{Application 2: Temperature data in Nuremberg}

Next we consider daily temperatures (per month) for the years 2000 until 2022 in Nuremberg, Germany. The data were  obtained from the \textit{Deutschen Wetter Dienst (DWD)} at \href{https://opendata.dwd.de/climate_environment/CDC/observations_germany/climate/10_minutes/air_temperature/historical/}{[Link]}. Data are recorded every 10 minutes, which results in $p = 144$. 

Figure \ref{fig:mean_temperature} contains plots of local polynomial fits of the observed daily temperature curves for each month. These are complemented by estimates of the first derivatives in Figure \ref{fig:mean_temperature_derivatives}. Here, in order to alleviate the boundary effect of local polynomial derivative estimation and to account for the periodic nature of temperature over a day, for estimation after / before midnight we additionally use temperature data from the previous / next day.  Summer and spring months together with September have much higher temperature changes over the day than winter months and November, with December being the month with the lowest daily variability.   

Again we assess the smoothness of the paths by comparing estimates $ \partial^{(0,1)}_{u}\Gamma(x,x)$ and $\partial^{(1,0)}_{u}\Gamma(x,x)$ of the covariance kernel $\Gamma$. The plots in Figure \ref{fig:weather_cov_diag} for the month April suggest that these do not coincide, indicating that the paths are actually not smooth. The conclusion can be drawn from Figure \ref{fig:weather_cov_diag_all_months} in the Appendix \ref{sec:simfullint} for all months during the year.  

\begin{figure}
	\begin{minipage}[b]{.45\linewidth}
		\centering
		\includegraphics[width=\linewidth]{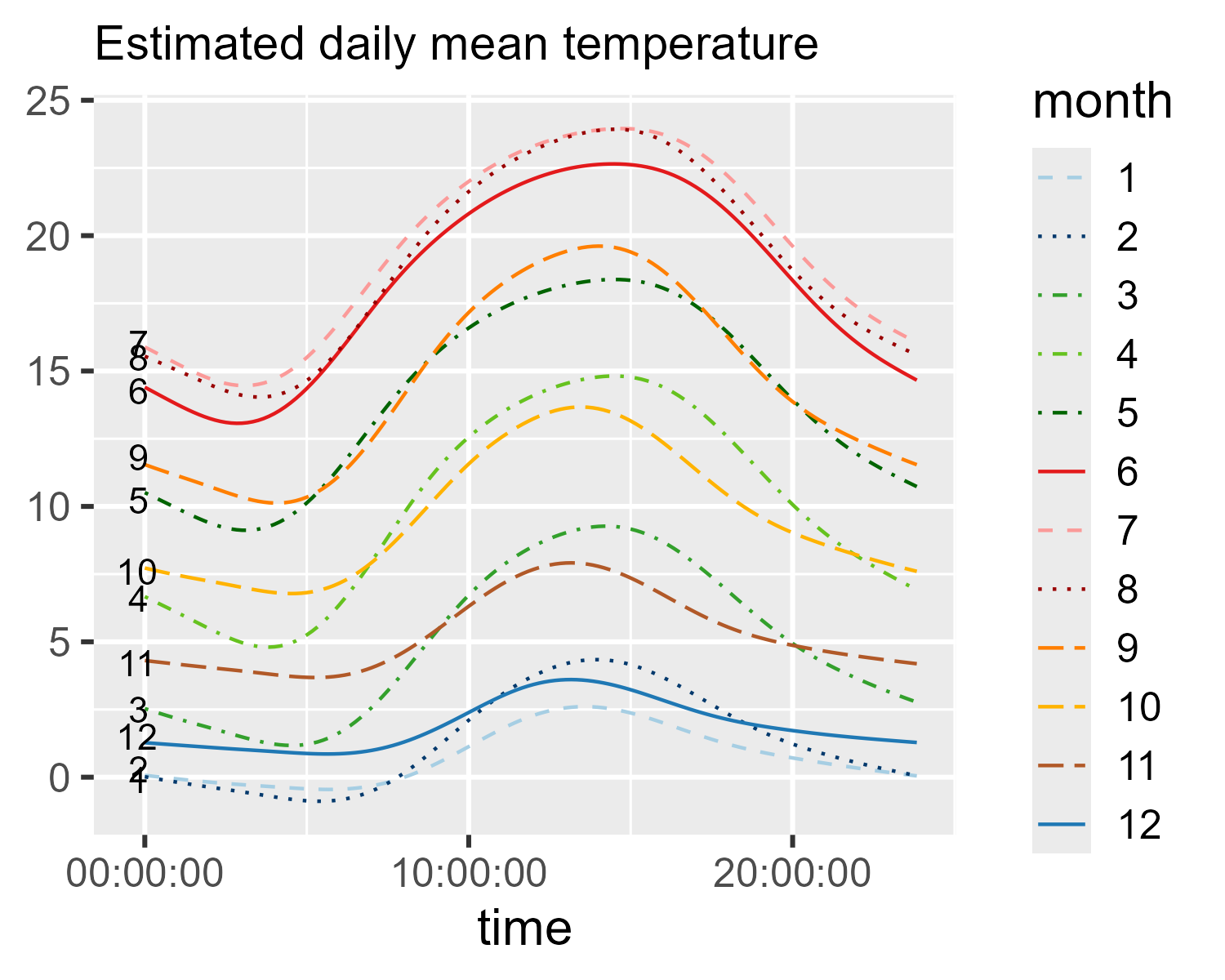}
		\caption{\small Estimation of the mean temperature curves.}
		\label{fig:mean_temperature}
	\end{minipage}
	\hspace{0.5cm}
	\begin{minipage}[b]{.45\linewidth}
		\centering
		\includegraphics[width=\linewidth]{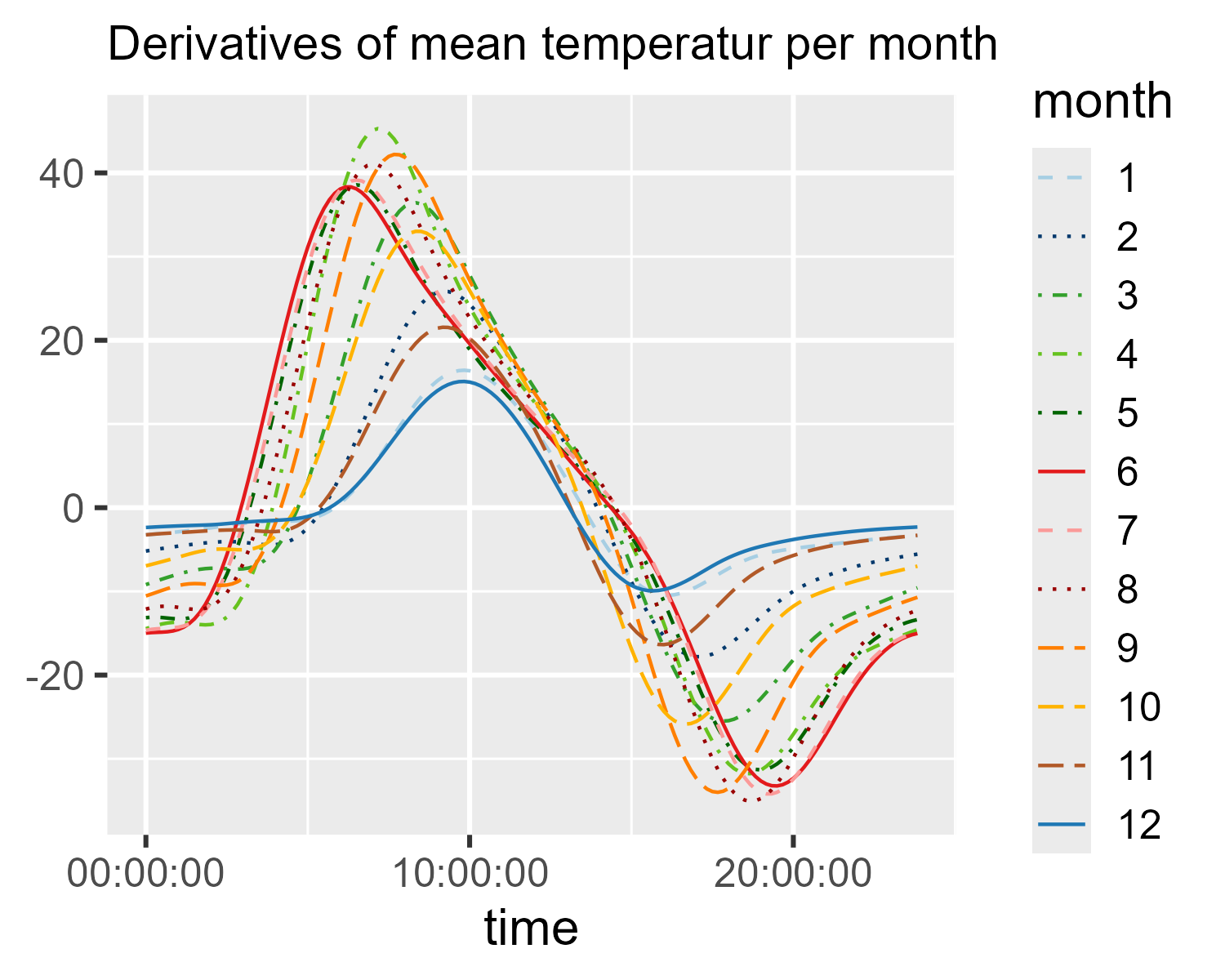}
		\caption{\small Estimation of the first derivative of the average temperature.}
		\label{fig:mean_temperature_derivatives}
	\end{minipage}	
\end{figure}


\begin{figure}
    \centering
    \includegraphics[width=0.5\linewidth]{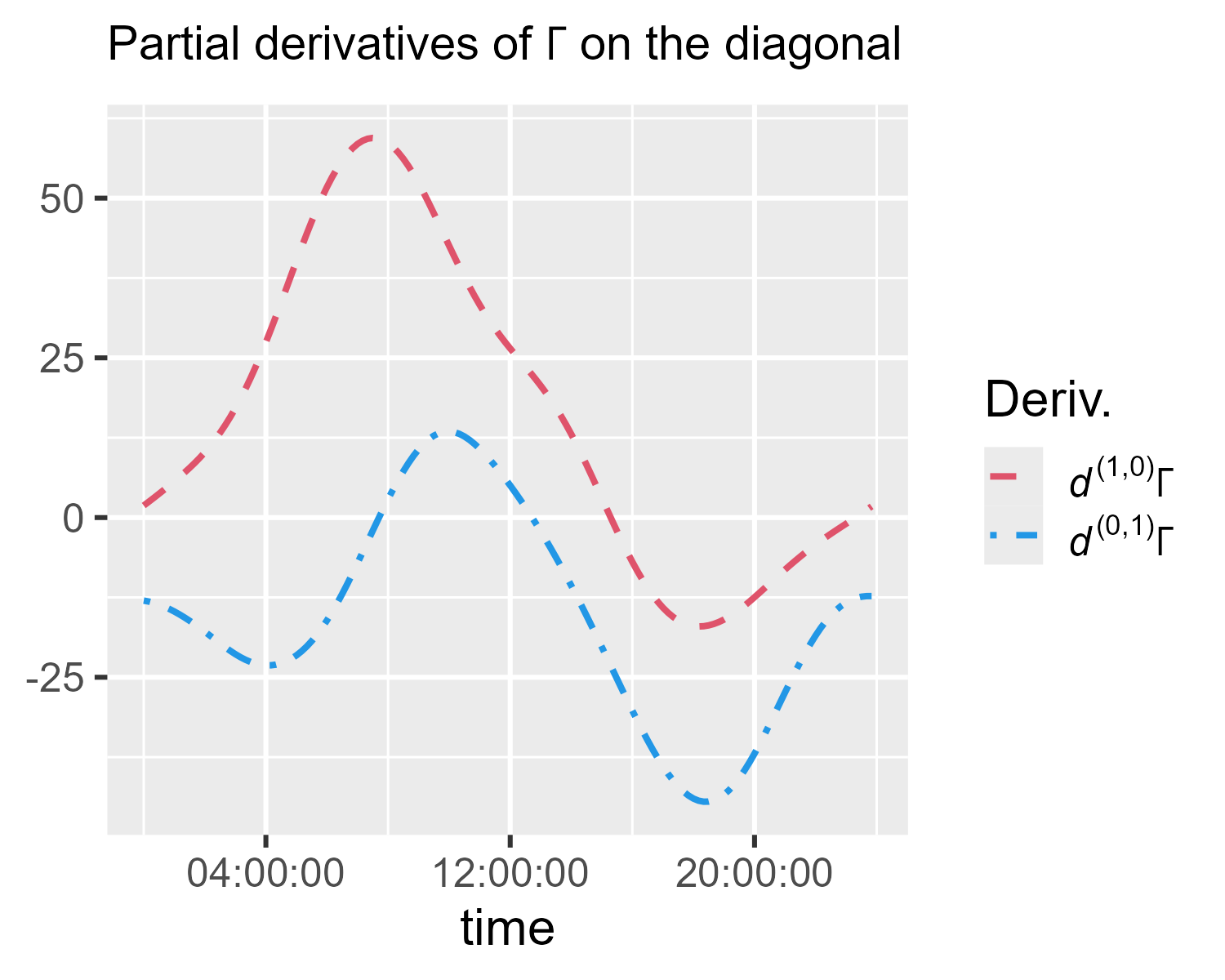}
    \caption{Estimates $ \partial^{(0,1)}_{u}\Gamma(x,x)$ and $\partial^{(1,0)}_{u}\Gamma(x,x)$ of the covariance kernel $\Gamma$ for temperatures in April.}
    \label{fig:weather_cov_diag}
\end{figure}

\section{Concluding remarks}\label{sec:conclude}

In this paper we discussed derivative estimation for the mean function of synchronously observed functional data. Estimating the derivatives of the covariance kernel and of the principal component functions is also of quite some interest \citep{dai2018derivative}. Smoothness of the covariance kernel near the diagonal is intrinsically related to smoothness of paths of the associated process \citep{azmoodeh2014necessary, da2023sample}, thus the phenomenon of rough sample paths with a smooth mean function will not analogously arise for the covariance kernel. As we indicated in Section \ref{sec:real_data_application}, using the restricted local polynomial estimator from \citet{berger2024optimal}, we can investigate differentiability of the covariance kernel on the diagonal, which is a necessary condition for smoothness of the sample paths of the associated processes. In subsequent work we plan to make use of this for the construction of a formal statistical testing procedure.

\section{Proofs for Section \ref{sec:optimalrates}}\label{sec:proof:theorem:rates}

\subsection{Proof of Theorem \ref{thm:mean:upper_bounds}}
Similar to the analysis for the estimation error of the mean function $\mu$ itself in \citet{berger2023dense}, if the weights $\wksb xhs$ fulfil \ref{ass:weights:pol:rep} then the error can be decomposed into
\begin{align}
	\hat \mu_{n,\bs p,h}^{(\bs s)}(\bs x) - \dels \mu(\bs x) & =  \sum_{\bs k=\bs 1}^{\bs p} \wksb xhs\,\big(\mu(\bs x_{\bs k}) - \frac{(\bs{x_j} - \bs x)^{\bs s}}{\bs s!}\dels \mu(\bs x)\big ) + \sum_{\bs k=\bs 1}^{\bs p} \wksb xhs\,\bar{\epsilon}_{\bs k} \\ & \qquad + \sum_{\bs k=\bs 1}^{\bs p} \wksb xhs\, \bar{Z}_{n}(\bs x_{\bs k})\nonumber\\
	& \defeql  I_{1}^{\bs p,h, \bs s}(\bs x) + I_{2}^{n, \bs p,h, \bs s}(\bs x) + I_{3}^{n, \bs p,h, \bs s}(\bs x) \, , \qquad \bs x \in[0,1]^d \,, \label{eq:decomposition}
\end{align}	
where as above
\begin{align*}
	\bar \epsilon_{\bs j,n} = \frac1n \sum_{i=1}^n \varepsilon_{i,\bs j} \qquad \text{and} \qquad \bar{Z}_{n}(\bs x) = \frac1n \sum_{i=1}^n Z_i(\bs x) \, .
\end{align*}

From the error decomposition \eqref{eq:decomposition} we obtain the bound
\begin{align}
	& \, \sup_{\mu\in\hclass, \, Z \in \Pclass} \E_\mu \Big[ \normb{ \muestsb{x}{h} - \dels \mu}_\infty \Big]\nonumber\\
	\leq & \, \sup_{\mu\in\hclass} \normb{I_1^{\bs p,h, \bs s}}_\infty
	+ \E  \Big[\normb{I_2^{n, \bs p,h, \bs s}}_\infty  \Big]+\sup_{Z \in \Pclass} \E  \Big[ \normb{I_3^{n, \bs p,h, \bs s}}_\infty  \Big] \,.\label{eq:errordecomp}
\end{align}

Let us note that property \ref{ass:weights:max} in Assumption \ref{ass:weights} together with Assumption \ref{ass:design:distribution} implies the following property. 
\begin{enumerate}[label=\normalfont{(M\arabic*)},leftmargin=9.9mm]
	\setcounter{enumi}{4}
	\item 	We have  $ \displaystyle \sum_{\bs 1\leq \bs j \leq \bs p} \big|\wjsb xhs \big| \leq \Csum\,h^{-\abs{\bs s}}$, $\bs x \in T$ with $\Csum=\Cmax\Ccard$. \label{ass:weights:sum}
\end{enumerate}

The upper bounds in Theorem \ref{thm:mean:upper_bounds}  directly follow from Lemmas \ref{lem:mean:estimation:rates:asbefore} and \ref{lem:mean:estimation:rates:new} below. 

\begin{lemma} \label{lem:mean:estimation:rates:asbefore} 
For the terms $I_1^{\bs p,h, \bs s}$ and $I_2^{n, \bs p,h, \bs s}$ in the error decomposition given in \eqref{eq:decomposition}, we have the following bounds.
\begin{enumerate}
	\item If the weights $\wjsb xhs$ satisfy \ref{ass:weights:pol:rep} with $\zeta = \lfloor\alpha\rfloor$ of Assumption \ref{ass:weights} as well as \ref{ass:weights:sum}, then 
	$$ \sup_{\mu \in \mc H(\alpha, L)} \sup_{h \in (\bs c/ \bs p, h_0]} h^{-(\alpha - \abs{\bs s})} \normb{I_1^{n,\bs p,\bs s}}_\infty = \mc O \big( L\big)\,. $$
	\item If the weights $\wjsb xhs$ satisfy \ref{ass:weights:equal:0} and \ref{ass:weights:max} of Assumption \ref{ass:weights}, the design satisfies Assumption \ref{ass:design:distribution} and the errors Assumption \ref{ass:model}, then uniformly in $h \in (\bs c/ \bs p, h_0]$, 
	$$ \expec\Big[ \normb{I_2^{n, \bs p, h,\bs s}}_\infty\Big] = \mc O \bigg( \sqrt{\frac{\log(h^{-1})}{n\,\bs{p^1}\,h^{d  + 2\abs{\bs s}}}} \bigg)\,. $$
\end{enumerate}
\end{lemma}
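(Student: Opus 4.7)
\medskip

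\noindent\textbf{Proof plan for Lemma \ref{lem:mean:estimation:rates:asbefore}.}

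\emph{Part (i): the bias term $I_1^{\bs p,h,\bs s}$.} The plan is a standard local-polynomial bias calculation combined with the polynomial reproduction property. For each $\bs x\in[0,1]^d$ and each $\bs k$ with $\|\bs x_{\bs k}-\bs x\|_\infty\le h$, I would expand $\mu\in\mc H(\alpha,L)$ by a Taylor expansion of order $\lfloor\alpha\rfloor$ around $\bs x$,
\begin{equation*}
 \mu(\bs x_{\bs k})=\sum_{|\bs r|\le\lfloor\alpha\rfloor}\frac{\partial^{\bs r}\mu(\bs x)}{\bs r!}(\bs x_{\bs k}-\bs x)^{\bs r}+R_\alpha(\bs x,\bs x_{\bs k}),
\end{equation*}
where the Hölder property of $\partial^{\bs r}\mu$ for $|\bs r|=\lfloor\alpha\rfloor$ and the mean value theorem yield $|R_\alpha(\bs x,\bs x_{\bs k})|\le CL\,\|\bs x_{\bs k}-\bs x\|_\infty^\alpha$. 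Substituting the expansion into $I_1^{\bs p,h,\bs s}(\bs x)$ and invoking the reproduction identity \eqref{eq:weights:rep:pol:analytic} with $\zeta=\lfloor\alpha\rfloor$ kills every polynomial term except the one corresponding to $\bs r=\bs s$, which cancels $\partial^{\bs s}\mu(\bs x)$. What remains is $I_1^{\bs p,h,\bs s}(\bs x)=\sum_{\bs k}w^{(\bs s)}_{\bs k}(\bs x;h)R_\alpha(\bs x,\bs x_{\bs k})$. Using the localisation property \ref{ass:weights:equal:0} to restrict the sum to the $h$-ball and the absolute-sum bound \ref{ass:weights:sum}, one obtains $|I_1^{\bs p,h,\bs s}(\bs x)|\le CL\,h^\alpha\cdot h^{-|\bs s|}=CL\,h^{\alpha-|\bs s|}$, uniformly in $\bs x$ and $h\in(c/\bs p_{\min},h_0]$.

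\emph{Part (ii): the noise term $I_2^{n,\bs p,h,\bs s}$; pointwise sub-Gaussian bound.} For fixed $\bs x$, $I_2^{n,\bs p,h,\bs s}(\bs x)$ is a linear combination of the independent variables $\bar\epsilon_{\bs k}=n^{-1}\sum_i\epsilon_{i,\bs k}$, each of which is sub-Gaussian with proxy $\zeta^2\sigma^2/n$ by Assumption \ref{ass:model} and independence across $i$. Hence $I_2^{n,\bs p,h,\bs s}(\bs x)$ is sub-Gaussian with variance proxy $\le (\zeta^2\sigma^2/n)\sum_{\bs k}(w^{(\bs s)}_{\bs k}(\bs x;h))^2$. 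The localisation \ref{ass:weights:equal:0} together with Assumption \ref{ass:design:distribution} bound the number of non-vanishing indices by $C_{\mathrm d}\,h^d\bs p^{\bs 1}$, and \ref{ass:weights:max} bounds each weight by $C_1/(\bs p^{\bs 1}h^{d+|\bs s|})$. Combined this gives $\sum_{\bs k}(w^{(\bs s)}_{\bs k}(\bs x;h))^2\le C/(\bs p^{\bs 1}\,h^{d+2|\bs s|})$, and hence
\begin{equation*}
 \bigl\|I_2^{n,\bs p,h,\bs s}(\bs x)\bigr\|_{\psi_2}\;\le\; C\,\zeta\,\sigma\bigl/\sqrt{n\,\bs p^{\bs 1}\,h^{d+2|\bs s|}}.
\end{equation*}

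\emph{Part (ii) continued: from pointwise to sup-norm via chaining.} To upgrade to $\E[\|I_2^{n,\bs p,h,\bs s}\|_\infty]$, I would use a standard $\delta$-net discretisation of $[0,1]^d$ with $|\mc N_\delta|\lesssim\delta^{-d}$. The maximum over $\mc N_\delta$ of sub-Gaussian variables with the proxy above is $\lesssim \sigma\sqrt{\log(\delta^{-d})/(n\bs p^{\bs 1}h^{d+2|\bs s|})}$. For the residual fluctuation $\sup_{\|\bs x-\bs y\|\le\delta}|I_2(\bs x)-I_2(\bs y)|$, the Lipschitz property of the weights implicit in the linear estimator (cf.\ \ref{ass:weights:lipschitz}, which holds under the assumptions used in Lemma \ref{lemma:locpol:weights}) combined with the crude bound $\E[\max_{\bs k}|\bar\epsilon_{\bs k}|]\lesssim\sigma\sqrt{\log(n\bs p^{\bs 1})/n}$ gives a deterministic Lipschitz control of $I_2$ at scale $\delta/(h^{|\bs s|+1})$ times the latter quantity. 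Choosing $\delta$ of the form $h^{K}$ with $K$ a sufficiently large constant makes this residual term negligible compared with the sub-Gaussian maximum, so $\log(\delta^{-d})\asymp\log(h^{-1})$ and the claimed rate $\sqrt{\log(h^{-1})/(n\bs p^{\bs 1}h^{d+2|\bs s|})}$ follows.

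\emph{Main obstacle.} The deterministic bias bound (i) is a routine Taylor-plus-reproduction argument; the only care needed is uniformity in $\bs x$ and in $h\in(c/\bs p_{\min},h_0]$. The real work is in (ii): obtaining the $\sqrt{\log(h^{-1})}$ factor rather than a much larger $\sqrt{\log(n\bs p^{\bs 1})}$ factor. This requires the net cardinality to be polynomial in $1/h$ (rather than in $n$ or $\bs p^{\bs 1}$), which forces a careful balance between the scale $\delta$ of the net and the Lipschitz modulus of the weights; standard sub-Gaussian maximal inequalities then deliver the final bound.
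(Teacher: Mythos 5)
Part \emph{(i)} of your plan is correct and is essentially the paper's argument: Taylor expansion of order $\floor{\alpha}$, cancellation of the polynomial part via \ref{ass:weights:pol:rep}, and a bound on the Hölder remainder using localization together with \ref{ass:weights:sum}, giving $L\,h^{\alpha}\cdot h^{-\abs{\bs s}}$ uniformly in $\bs x$ and $h$. The pointwise variance computation in part \emph{(ii)} is also correct.

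However, the sup-norm step in part \emph{(ii)} has a genuine gap. Your single-scale scheme --- a $\delta$-net plus a deterministic residual bound of the form $\delta\,h^{-\abs{\bs s}-1}\,\E[\max_{\bs k}\abs{\bar\epsilon_{\bs k}}]\lesssim \delta\,h^{-\abs{\bs s}-1}\,\sigma\sqrt{\log(n\bs p^{\bs 1})/n}$ --- cannot deliver the claimed $\sqrt{\log(h^{-1})}$ factor uniformly over $h\in(c/\bs p_{\min},h_0]$. For the residual to be dominated by the target $\sigma\sqrt{\log(h^{-1})/(n\,\bs p^{\bs 1}h^{d+2\abs{\bs s}})}$ you need $\delta\lesssim h^{1-d/2}(\bs p^{\bs 1})^{-1/2}$ up to logarithms; when $h$ is bounded away from zero (e.g.\ $h=h_0$) the choice $\delta=h^{K}$ with constant $K$ is a constant and the residual is of order $\sqrt{\log(n\bs p^{\bs 1})/n}$, which swamps the target $\asymp(n\bs p^{\bs 1})^{-1/2}$. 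Forcing $\delta\lesssim(\bs p^{\bs 1})^{-1/2}$ instead inflates the net cardinality so that $\log\abs{\mc N_\delta}\asymp\log(\bs p^{\bs 1})$, and you end up with $\sqrt{\log(n\bs p^{\bs 1})}$ rather than $\sqrt{\log(h^{-1})}$ --- strictly weaker than the stated bound whenever $h\gg 1/\bs p_{\min}$. The paper avoids this by applying Dudley's entropy bound to the normalized process with respect to the \emph{intrinsic} sub-Gaussian semimetric $\dd_S(\bs x,\bs y)\le M\big((\norm{\bs x-\bs y}_\infty/h)\wedge 1\big)$, obtained from the Lipschitz property \ref{ass:weights:lipschitz} of the weights: the increments are themselves sub-Gaussian at \emph{every} scale (no crude $\max_{\bs k}\abs{\bar\epsilon_{\bs k}}$ bound is needed), the diameter of $[0,1]^d$ under $\dd_S$ is $O(1)$, the packing numbers are $\le(2M/(\delta h))^d$, and the entropy integral evaluates to $O(\sqrt{\log(h^{-1})})$. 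To repair your argument you should replace the one-step discretization by this full chaining (or an equivalent multiscale argument) with respect to $\dd_S$.
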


\begin{proof}[Proof of Lemma \ref{lem:mean:estimation:rates:asbefore}]

The proof of part \textit{ii)} is fully analogous to that for the mean function $\mu$ itself as given in \citet[Lemma 2, ii)]{berger2023dense}. For convenience it is given in the supplementary Appendix, Section \ref{app:additional_proofs}. 

\textit{i).}  While the proof of part \textit{i)} is also similar to the case of the mean function, some care has to be taken in the multivariate setting since we only estimate one of the partial derivatives of order $\abs{ \bs s}$. 

From the multivariate Taylor theorem of order $\floor{\alpha}$ with Lagrange remainder term, given $\bs x$ for each $\bs x_j$ there exists  $\theta_{\bs j} \in [0,1], \bs 1 \leq \bs j \leq \bs p,$ such that for $\bs{\tau_j} \defeq \bs x + \theta_{\bs j}(\bs{x_j} - \bs x) \in [0,1]^d$  it holds that 
$$ \mu(\bs x_{\bs j})  = \sum_{|\bs k|= \bs 0}^{\floor{\alpha}-1}\partial^{\bs k} \mu (\bs x) \frac{(\bs x_{\bs j}-\bs x)^{\bs k}}{\bs k !} +  \sum_{|\bs k|= \floor{\alpha}}\partial^{\bs k} \mu (\bs \tau_{\bs j})\frac{(\bs x_{\bs j}-\bs x)^{\bs k}}{\bs k !}. $$
Therefore
	\allowdisplaybreaks
\begin{align*}
	\Big|\sum_{\bs j = \bs 1}^{\bs p}& \wjsb xhs \mu(\bs x_{\bs j}) - \dels\mu(\bs x)\Big|\\
	& = \Big|\sum_{\bs j = \bs 1}^{\bs p} \wjsb xhs\big( \mu(\bs x_{\bs j}) - \frac{(\bs{x_j} - \bs x)^{\bs{s}}}{\bs s!}\dels\mu(\bs x)\big)\Big| \tag{by \ref{ass:weights:pol:rep}}\\
	&=\Big|\sum_{\bs j = \bs 1}^{\bs p} \Big(\sum_{|\bs k|= \bs 0}^{\floor{\alpha}-1}\partial^{\bs k} \mu (\bs x) \frac{(\bs x_{\bs j}-\bs x)^{\bs k}}{\bs k !} +  \sum_{|\bs k|= \floor{\alpha}}\partial^{\bs k} \mu (\bs \tau_{\bs j})\frac{(\bs x_{\bs j}-\bs x)^{\bs k}}{\bs k !}\\ 
	& \QQuad\QQuad -  \frac{(\bs{x_j} - \bs x)^{\bs{s}}}{\bs s!}\dels \mu (\bs x)\Big) \wjsb xhs\Big|\tag{Taylor}\\
	&=\Big|\sum_{\bs j = \bs 1}^{\bs p}\Bigg( \sum_{|\bs k|= \floor{\alpha}}\bigg(\partial^{\bs k} \mu (\bs \tau_{\bs j})-\partial^{\bs k} \mu (\bs x)\bigg)\frac{(\bs x_{\bs j}-\bs x)^{\bs k}}{\bs k !}\Bigg) \wjsb xhs\Big|\tag{by \ref{ass:weights:pol:rep}}\\
	& \leq L\, \sum_{\bs j = \bs 1}^{\bs p}\Bigg( \sum_{|\bs k|= \floor{\alpha}}\norm{\theta_{\bs j}(\bs{x_j} - \bs x)}_\infty^{\alpha-\floor{\alpha}}\frac{\big|(\bs x_{\bs j}-\bs x)^{\bs k}\big|}{\bs k !}\Bigg) \big|\wjsb xhs\big|\\
	&\leq  L\, \sum_{\bs j = \bs 1}^{\bs p}\norm{\bs x_{\bs j}- \bs x}_\infty^{\alpha}\Bigg( \sum_{|\bs k|= \floor{\alpha}}\frac{1}{\bs k !}\Bigg) \big|\wjsb xhs\big|\\
	&\leq L\, h_\infty^{\alpha  - \abs{\bs s} } \sum_{|\bs k|= \floor{\alpha}}\frac{C_1}{\bs k !}.
\end{align*}

\end{proof}

\begin{lemma} \label{lem:mean:estimation:rates:new} 
For the term $I_3^{n, \bs p,h, \bs s}$ in the error decomposition given in \eqref{eq:decomposition}, if the weights $\wjsb xhs$ satisfy \ref{ass:weights:equal:0}, \ref{ass:weights:max} and \ref{ass:weights:lipschitz} we have the following bounds.
\begin{enumerate}
	\item If $\beta > \abs{\bs s}$ then for sufficiently large $n$, 
   	\begin{align}
		 \sup_{h \in (\bs c/ \bs p, h_0]}\sup_{ Z  \in \mc P(\beta, C_Z)} \expec\Big[ \normb{ \, I_3^{n,\bs p,h,(\bs s)}}_\infty \Big] \leq C \, \frac{C_Z}{\sqrt n} , \label{eq:bound:smooth:process}
	\end{align}
for some universal constant $C>0$. 
    \item If $\beta \leq \abs{\bs s}$ then for sufficiently large $n$, 
    \begin{align}
		 \sup_{h \in (\bs c/ \bs p, h_0]}\sup_{ Z  \in \mc P(\beta, C_Z)} \,\sup_{0<\delta <\min(1,\beta) } \delta^{1/2}\,h^{\abs{\bs s} - (\beta - \delta) }\,\expec\Big[ \normb{ I_3^{n,\bs p,h,\bs s}}_\infty \Big] \leq C\, \frac{C_Z}{\sqrt n}. \label{eq:bound:rougher:paths:then:derivative}
	\end{align}
	
\end{enumerate}
\end{lemma}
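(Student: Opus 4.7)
\textbf{Proof plan for Lemma \ref{lem:mean:estimation:rates:new}.} Write the error term as an empirical average of i.i.d.\ mean-zero random elements of $C([0,1]^d)$:
\begin{equation*}
I_3^{n,\bs p,h,\bs s}(\bs x)\;=\;\frac{1}{n}\sum_{i=1}^n X_{n,i}(\bs x),\qquad X_{n,i}(\bs x)\defeq\sum_{\bs k=\bs 1}^{\bs p}\wksb{x}{h}{s}\,Z_i(\bs x_{\bs k}).
\end{equation*}
The overall strategy is to exploit \ref{ass:weights:pol:rep} by subtracting an appropriate Taylor polynomial of $Z_i$ centred at $\bs x$: since the weights annihilate all monomials $(\bs x_{\bs k}-\bs x)^{\bs r}$ with $\abs{\bs r}\leq\zeta=\lfloor\alpha\rfloor$ except $\bs r=\bs s$, only the Taylor remainder contributes, and its size is controlled by the random Hölder norm $M_i$ of $Z_i$ (with $\expec M_i^2\leq C_Z$). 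Independence across $i$ provides the $n^{-1/2}$ factor through variance, and the passage from pointwise to sup-norm is done via a Garsia--Rodemich--Rumsey / Kolmogorov-type $L^2$-moment inequality.

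For part \textit{i)} ($\beta>\abs{\bs s}$), Taylor expand $Z_i$ to order $\abs{\bs s}$ (possible since a.s.\ $Z_i$ is $\abs{\bs s}$-times differentiable). Property \ref{ass:weights:pol:rep} then yields
\begin{equation*}
X_{n,i}(\bs x)\;=\;\dels Z_i(\bs x)+R_{n,i}(\bs x),\qquad \abs{R_{n,i}(\bs x)}\;\leq\;C\,M_i\,h^{\min(1,\beta-\abs{\bs s})},
\end{equation*}
where the remainder bound uses Hölder continuity of $\partial^{\bs r}Z_i$ for $\abs{\bs r}=\abs{\bs s}$, together with \ref{ass:weights:equal:0} and \ref{ass:weights:sum}. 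Both $\dels Z_i$ and $R_{n,i}$ are i.i.d.\ mean-zero processes in $C([0,1]^d)$ with pointwise second moment bounded by $C_Z$ (respectively $C\,C_Z h^{2\min(1,\beta-\abs{\bs s})}$) and an $L^2$-Hölder modulus controlled by $\expec M_1^2\leq C_Z$. A GRR/Kolmogorov moment inequality, combined with the $1/n$ factor from independence in the variance, then bounds $\expec\normb{n^{-1}\sum_i\dels Z_i}_\infty$ and $\expec\normb{n^{-1}\sum_i R_{n,i}}_\infty$ both by $C\,C_Z/\sqrt n$ uniformly in $h\in(c/\bs p,h_0]$, giving \eqref{eq:bound:smooth:process}.

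For part \textit{ii)} ($\beta\leq\abs{\bs s}$), $Z_i$ is only $\lfloor\beta\rfloor$-times differentiable, so we Taylor-expand only to order $\lfloor\beta\rfloor<\abs{\bs s}$. The polynomial part of this expansion has degree $\leq\lfloor\beta\rfloor<\abs{\bs s}\leq\zeta$, hence is annihilated by \ref{ass:weights:pol:rep}, and only the Hölder remainder survives:
\begin{equation*}
X_{n,i}(\bs x)\;=\;\sum_{\bs k}\wksb{x}{h}{s}\,r_i(\bs x_{\bs k},\bs x),\qquad \abs{r_i(\bs x_{\bs k},\bs x)}\;\leq\;C\,M_i\,\norm{\bs x_{\bs k}-\bs x}^\beta,
\end{equation*}
which together with \ref{ass:weights:equal:0} and \ref{ass:weights:sum} gives the pointwise bound $\abs{X_{n,i}(\bs x)}\leq C\,M_i h^{\beta-\abs{\bs s}}$. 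To pass to uniform control, use the cancellation $\sum_{\bs k}\bigl(\wksb{x}{h}{s}-w_{\bs k}^{(\bs s)}(\bs y;h)\bigr)=0$ (from \ref{ass:weights:pol:rep} with $\bs r=\bs 0$, since $\abs{\bs s}\geq 1$) to rewrite $X_{n,i}(\bs x)-X_{n,i}(\bs y)$ with $Z_i$ replaced by $Z_i-Z_i((\bs x+\bs y)/2)$. Hölder continuity of $Z_i$ with exponent $\beta$ combined with the Lipschitz bound coming from \ref{ass:weights:lipschitz} yields a first-order estimate of the form $\abs{X_{n,i}(\bs x)-X_{n,i}(\bs y)}\leq C\,M_i\,h^{\beta-\abs{\bs s}-1}\,\norm{\bs x-\bs y}$; a geometric interpolation between this and the pointwise bound produces, for each $0<\delta<\min(1,\beta)$,
\begin{equation*}
\abs{X_{n,i}(\bs x)-X_{n,i}(\bs y)}\;\leq\;C\,M_i\,h^{-(\abs{\bs s}-\delta)}\,\norm{\bs x-\bs y}^{\beta-\delta},
\end{equation*}
hence the same $L^2$-modulus with $\sqrt{C_Z}$ in place of $M_i$. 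Feeding this $L^2$-modulus together with the pointwise variance $\leq C\,C_Z h^{2(\beta-\abs{\bs s})}/n$ into a GRR/Kolmogorov inequality applied to the centred average $n^{-1}\sum_i X_{n,i}$ on $[0,1]^d$ yields \eqref{eq:bound:rougher:paths:then:derivative}, with the $\delta^{-1/2}$ factor emerging from the resulting entropy-type integral.

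The technical heart is the interpolation/chaining step of part \textit{ii)}: one must execute the GRR inequality so that the exponent on $h$ reads exactly $\beta-\delta-\abs{\bs s}$ and the $\delta$-dependence compresses to $\delta^{-1/2}$ (rather than a worse power), which is what allows the choice $\delta\asymp 1/\log n$ in the remark following Corollary \ref{cor:rateconvup} to produce the sharp rate. Because the Hölder constants $M_i$ are only assumed $L^2$-bounded and not sub-Gaussian, we must rely on $L^2$-moment continuity inequalities of Garsia--Rodemich--Rumsey / Kolmogorov type rather than Dudley-type sub-Gaussian chaining.
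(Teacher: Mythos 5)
Your skeleton matches the paper's: the same splitting of $I_3^{n,\bs p,h,\bs s}$ into an average of i.i.d.\ processes $X_{n,i}$, the same use of \ref{ass:weights:pol:rep} to annihilate the Taylor polynomial of $Z_i$ (order $\abs{\bs s}$ in part \textit{i)}, order $\floor{\beta}<\abs{\bs s}$ in part \textit{ii)}), and the same envelopes $C\,M_i/\sqrt n$ and $C\,M_i/(\sqrt n\, h^{\abs{\bs s}-(\beta-\delta)})$ with exactly the right powers of $h$. The gap is in the step you yourself identify as the technical heart: passing from these pathwise bounds to the sup-norm expectation. The paper does not use a Garsia--Rodemich--Rumsey/Kolmogorov second-moment inequality; it uses Pollard's maximal inequality (Lemma \ref{lem:maximalInequalityExpection} with Lemma \ref{lem:manageability:lipschitz}), in which the packing numbers are bounded \emph{pathwise, relative to the random envelope}: one verifies $\norm{\bs x-\bs y}\le\epsilon^{\kappa_1}\Rightarrow\abs{X_{n,i}(\bs x)-X_{n,i}(\bs y)}\le K\epsilon^{\kappa_2}\Phi_{n,i}$ and obtains an entropy integral of order $\sqrt{\kappa_1/\kappa_2}$ multiplying $\expec[\norm{\bs\Phi_n}_2]$. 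This is what produces the factor $\delta^{-1/2}$ (from $\kappa_1/\kappa_2=1/\delta$) while requiring only $\expec[M_i^2]\le C_Z$: the sub-Gaussianity behind the $\sqrt{\log}$ is generated internally by symmetrization conditional on the data, not assumed of $M_i$.

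A chaining argument based only on the $L^2$ modulus $\expec[(S_n(\bs x)-S_n(\bs y))^2]^{1/2}\lesssim n^{-1/2}\sqrt{C_Z}\,\norm{\bs x-\bs y}^{\gamma}$ does not substitute for this. With $\Psi(u)=u^2$ (the only choice compatible with $M_i\in L^2$) the GRR double integral, equivalently the $L^2$ Kolmogorov criterion, requires the exponent $\gamma$ to exceed $d/2$; this fails in part \textit{i)} when $\beta-\abs{\bs s}$ is small relative to $d$ and in part \textit{ii)} for small $\beta$, and even where it holds the constant degenerates like an inverse first power of the distance to criticality, not like $\delta^{-1/2}$. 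Applying GRR pathwise to each $X_{n,i}$ and summing by the triangle inequality loses the cancellation across $i$ and yields $\expec[M_1]$ instead of $C_Z/\sqrt n$. So the maximal-inequality step must be replaced by an empirical-process device of Pollard's type rather than a moment-modulus inequality. Two smaller points in part \textit{ii)}: subtracting only the constant $Z_i((\bs x+\bs y)/2)$ controls increments only for $\beta\le1$ --- for $1<\beta\le\abs{\bs s}$ you must subtract the full degree-$\floor{\beta}$ Taylor polynomials at $\bs x$ and $\bs y$ (still killed by \ref{ass:weights:pol:rep}); and the geometric interpolation placing the exponent $\beta-\delta$ on $\norm{\bs x-\bs y}$ requires $\beta-\delta\le1$. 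The paper sidesteps both by treating the regimes $\epsilon>h$ and $\epsilon\le h$ separately with different scalings of $\norm{\bs x - \bs y}$ against powers of $\epsilon$.
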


\begin{proof}[Proof of Lemma \ref{lem:mean:estimation:rates:new}]

We shall apply the maximal inequality from \citet[Section 7, display (7.10)]{pollard1990empirical}, see Lemma \ref{lem:maximalInequalityExpection} in Section \ref{sec:maxinequa},  to the processes  $X_{n} = (X_{n1},\ldots,X_{nn})$ defined by
	\begin{align}
		X_{ni} (\bs x) = \frac{1}{\sqrt n} \sum_{\bs j = \bs 1}^{\bs p} \wjsb xhs  Z_i(\bs{x_j}), \label{eq:X:process:deriv}
	\end{align}
where $Z_i \in \mc P(\beta, C_Z)$. To this end let $M_i \geq \norm{Z_i}_{\mc H, \beta}$ \text{ a.s.} with $\expec[M_i^2]  \leq C_Z$ as required in the definition \eqref{eq:classprocesses} of the class $\mc P(\beta, C_Z)$.   

\smallskip

\textit{i)} We derive upper bounds on the capacity number as required for the maximal inequality in Lemma \ref{lem:maximalInequalityExpection}. To this end we check the conditions of Lemma \ref{lem:manageability:lipschitz} where we use the envelope $\bs \Phi_{n} \defeq  \Cenv\, (M_1, \ldots , M_n)^\top / \sqrt n$. Here $\Cenv>0$ is appropriate constant depending on $\Cmax$ - $ \Csum$ from Assumption \ref{ass:weights} on the weights, as well as $\bs s, \alpha, C_Z, \beta$ and $d$. 
Indeed this an envelope for $X_n$, since by using Taylor expansion up to order $\abs{\bs s} $, for intermediate points $\bs {\tau_j} = \bs x + \theta_{\bs j}(\bs{x_j} - \bs x), \theta_{\bs j} \in [0,1]$  and the property \ref{ass:weights:pol:rep} of the weights,   we get that
	\begin{align*}
		\abs{X_{ni}(\bs x)} & = \frac1{\sqrt n}  \Big| \sum_{\bs j = \bs 1}^{\bs p} \wjsb xhs Z_i(\bs{x_j}) \Big|\\
        & = \frac1{\sqrt n} \absb{\sum_{\bs j = \bs 1}^{\bs p} \wjsb xhs \Big( \sum_{\abs{\bs k} = 0}^{ \abs{\bs s}-1} \Zi kx\, \frac{(\bs{x_j} - \bs x)^{\bs k}}{\bs k!} + \sum_{\abs{\bs k} = \abs{\bs s}} \Zi k{ \tau_j}\, \frac{(\bs{x_j} - \bs x)^{\bs k}}{\bs k!}\Big)} \\
		& \leq \frac1{\sqrt n} \, M_i\, \sum_{\bs j = \bs 1}^{\bs p} |\wjsb xhs|\, \sum_{\abs{\bs k} = \abs{\bs s}}  \frac{\big|(\bs{x_j} - \bs x)^{\bs k}\big|}{\bs k!}\Big)\tag{by \ref{ass:weights:pol:rep}}\\
		& \leq \frac{ \Csum}{\sqrt n\,h^{\abs{\bs s}}}\, M_i \, h^{\abs{\bs s}}\, \sum_{\abs{\bs k} = \abs{\bs s}}   \frac{1}{\bs k!} \leq \frac{ \Csum}{\sqrt n}\, M_i \,. \tag{by \ref{ass:weights:sum}}
	\end{align*}

We next check the Lipschitz property as stated in Lemma \ref{lem:manageability:lipschitz}, which is used to bound the capacity number. To this end let $0< \epsilon< 1$, $\bs x ,\bs y$ with 
\begin{equation}\label{eq:lipschitzcont}
\norm{\bs x - \bs y}_\infty < \epsilon^\eta,
\end{equation}
where $\eta>0$ is to be specified. Further let $\gamma = \min(1,\beta - \abs{\bs s})$. 


	First if $\epsilon > h$, we take $\eta = 1/\gamma$. Using Taylor expansion up to order $\abs{\bs s} $ and the property \ref{ass:weights:pol:rep} of the weights, for intermediate points $\bs \tau_{\bs x,\bs j} = \bs x + \theta_{\bs x,\bs j}(\bs{x_j} - \bs x), \theta_{\bs x,\bs j} \in [0,1]$ and $\bs \tau_{\bs y,\bs j} = \bs y + \theta_{\bs y,\bs j}(\bs{x_j} - \bs y), \theta_{\bs y,\bs j} \in [0,1]$ we obtain
	\begin{align}
		\sqrt n \absb{ X_{ni}(\bs x) - X_{ni}(\bs y)}
		& \leq \abss{ \sum_{\bs j = \bs 1}^{\bs p} \wjsb xhs \sum_{\abs{\bs k} = \abs{\bs s}}\frac{(\bs{x_j} - \bs x)^{\bs k}}{\bs k!} \big( \Zi k{\bs \tau_{\bs x,\bs j}} - \Zi kx\big)} \nonumber\\
        & \QQuad + \absb{\Zi sx - \Zi sy} \label{eq:hilfboundprocess}\\
		& \QQuad + \abss{ \sum_{\bs j = \bs 1}^{\bs p} \wjsb yhs \sum_{\abs{\bs k} = \abs{\bs s}} \frac{(\bs {x_j} - \bs y)^{\bs k}}{\bs k!} \big( \Zi ky - \Zi k{\bs \tau_{\bs y,\bs j}}\big)}. \nonumber
	\end{align}
	Now for the second term using the Hölder continuity of $\Zi sx$ yields
	\begin{align*}
		 \absb{\Zi sx - \Zi sy} \leq \Cenv\, M_i\, \norm{\bs x - \bs y}_\infty^\gamma \leq \,\Cenv \, M_i\, \epsilon\,
	\end{align*}
    by \eqref{eq:lipschitzcont} and choice of $\eta=1/\gamma$. 
	Using \ref{ass:weights:sum} the first and third terms are similarly bounded by 
	\begin{align*}
		 \sum_{\bs j = \bs 1}^{\bs p} \abs{\wjsb xhs} \sum_{\abs{\bs k} = \abs{\bs s}} \frac{\abs{(\bs{x_j} - \bs x)^{\bs k}}}{\bs k!} \abs{ \Zi{k}{\bs \tau_{\bs x,\bs j}} - \Zi kx} & \leq \Cenv\, \frac{1}{h^{\abs{\bs s}}}\,h^{\abs{\bs s}}\,M_i\,h^\gamma \leq \Cenv\, \, M_i\, \epsilon^\gamma . 
	\end{align*}
    since we considered the case that $\epsilon > h$.
    
	Now if $\epsilon \leq h$, we argue as for the case \textit{i)}
    if we take $\eta = 2 + (d + \abs{\bs s})$ in \eqref{eq:lipschitzcont}, using \ref{ass:weights:lipschitz} we get 
	\begin{align*}
		\sqrt n \, \absb{X_{n,i}(\bs x) - X_{n,i}(\bs y)} & \leq M_i \frac{\Ccard}{h^{d + \abs{\bs s}}}\bigg(\frac{\norm{\bs x - \bs y}_\infty}{h} \wedge 1 \bigg)\\
		& \leq M_i \frac{\Ccard}{h^{d + \abs{\bs s}}}\, \frac{\epsilon^{2 + (d + \abs{\bs s})}}{h} \tag{choice of $\eta$ in \eqref{eq:lipschitzcont}}\\
		& \leq \Ccard\,M_i\epsilon\,.\tag{$\epsilon \leq h$}
	\end{align*}
	Then the bound in \textit{i)} follows from Lemmas \ref{lem:manageability:lipschitz} and \ref{lem:maximalInequalityExpection}. 
    
	\textit{ii).} We proceed similarly as for \textit{i)}, but using the envelope  
    $$\Phi_{n}=\frac{\Cenv\, (M_1, \ldots, M_n)^\top}{\sqrt n\, h^{\abs{\bs s} -(\beta - \delta)}}.$$
    To show that this is indeed an envelope for $X_n$ for appropriate choice of $\Cenv>0$, again we use a Taylor expansion, this time of order $\floor{\beta}$. Then for intermediate points  
    $\bs{\tau_j} = \bs x +  \theta_{\bs j}(\bs{x_j} - \bs x)$ with certain $\theta_{\bs j}\in[0,1]$ we obtain the bound
	\begin{align*}
		\big| X_{n,i}(\bs x) \big| & = \bigg| \frac1{\sqrt n} \sum_{\bs j =\bs 1}^{\bs p} \wjsb xhs \Big( \sum_{\abs{\bs k} = 0}^{ \floor{\beta}-1} \frac{(\bs{x_j} - \bs x)^{\bs k}}{\bs k!}\Zi kx + \sum_{\abs{\bs k} = \floor{\beta}} \frac{(\bs{x_j} - \bs x)^{\bs k}}{\bs k!}\Zi k{\tau_j}\Big)\bigg|\\     
		& = \frac1{\sqrt n}  \Abss{\sum_{\bs j = \bs 1}^{\bs p}\wjsb xhs\sum_{\abs{\bs k} = \floor{\beta}} \frac{(\bs{x_j} - \bs x)^{\bs k}}{\bs k!}\big(\Zi k{\tau_j} - \Zi kx\big)} \tag{by \ref{ass:weights:pol:rep}}\\
		& \leq \sum_{\abs{\bs k } = \floor{\beta}}\frac{\Csum\,M_i\,h^{\beta}}{\bs k!\,\sqrt n \,h^{\abs{\bs s}}}  \leq \Cenv\, M_i\, \frac{1}{{\sqrt n}\,h^{\abs{\bs s} - (\beta- \delta)}}\,.\tag{by \ref{ass:weights:sum}}
	\end{align*}
    
	Next, as for \textit{i)} let's turn to the Lipschitz-condition required in Lemma \ref{lem:manageability:lipschitz}. Again let $0< \epsilon< 1$, $\bs x ,\bs y$ with 
satisfying \eqref{eq:lipschitzcont} for some $\eta$ to be specified. 
    
	First consider $\epsilon > h$, and take $\eta=1$ in \eqref{eq:lipschitzcont}.

    As before, using a Taylor expansion up to the order $\floor{\beta}$ and \ref{ass:weights:pol:rep} for the weights we get that 
	\begin{align*}
		\sqrt n \absb{X_{ni}(\bs x)  - X_{ni}(\bs y)} & \leq \Abss{\sum_{\bs j = \bs 1}^{\bs p}\wjsb xhs\sum_{\abs{\bs k} = \floor{\beta}} \frac{(\bs{x_j} - \bs x)^{\bs k}}{\bs k!}\big(\Zi k{\tau_{x,j}} - \Zi kx\big)} \\
		& \qquad + \Abss{\sum_{\bs j = \bs 1}^{\bs p}\wjsb yhs\sum_{\abs{\bs k} = \floor{\beta}} \frac{(\bs{x_j} - \bs y)^{\bs k}}{\bs k!}\big(\Zi ky - \Zi k{\tau_{y,j}}\big)}\,,
	\end{align*}
	with $\bs{\tilde\tau_j} = \bs y + \tilde \theta_{\bs j}(\bs{x_j} - \bs y)$ for certain $\tilde\theta_j \in [0,1]$. Note that by property \ref{ass:weights:pol:rep} of the weights the intermediate term in \eqref{eq:hilfboundprocess} does not arise here since we expand to an order $< \abs{\bs s}$. The first term is upper-bounded by
	\begin{align*}
		& \sum_{\bs j = \bs 1}^{\bs p}\abs{\wjsb xhs}\, \sum_{\abs{\bs k} = \floor{\beta}} \frac{\abs{(\bs{x_j} - \bs x)^{\bs k}}}{\bs k!}\abs{\Zi k{\tau_{x,j}} - \Zi kx}\\
        &\leq \Cenv\, M_i\, \frac{1}{h^{\abs{\bs s}}}  h^{ \beta} \leq   \frac{\Cenv\, M_i\,}{h^{\abs{\bs s} - (\beta-\delta)}}  \epsilon^{\delta} \tag{since $\epsilon > h$}.
	\end{align*}

    If $\epsilon \leq h$ we can take the same bound as in the previous step \textit{i)}. Therefore, \eqref{eq:measucondprocess} is satisfied for $\kappa= \max(1/\delta,  2 + (d + \abs{\bs s}))$. The entropy integral in \eqref{eq:theconstant} is thus of order $\delta^{-1/2}$, and the maximal inequality \eqref{eq:maxboundpollard} yields the result.

\end{proof}

\subsection{Proof of Theorem \ref{thm:lower:bound:mean:derivatives}}\label{sec:prooflowerbound}

\begin{proof}[Proof of Theorem \ref{thm:lower:bound:mean:derivatives}]
For the proof we rely on a reduction to hypothesis testing, as detailed e.g.~in \citet[Section 2]{tsybakov2008introduction}.

    \textit{i)}  
	\textit{Lower bound of order $\bs p_{\min}^{-\alpha + \abs{\bs s}}$:} We construct two sequences of hypothesis functions $\mu_0 = 0$ and $\mu_1 = \mu_{1,p}$ such that $\mu_0, \mu_1 \in \mc H(\alpha, L)$,  $\norm{\dels \mu_0 - \dels \mu_1}_\infty \geq c\,\bs p_{\min}^{-\alpha + \abs{\bs s}}$ for a constant $c> 0$, and such that $\mu_1(\bs x_{\bs j}) = 0$ for all design points $\bs x_{\bs j}$, so that the distribution of the observations coincides for both hypotheses. 
    Set 
    $$p=\bs p_{\max},$$ which by assumption is of the same order as $\bs p_{\min}$.  
    For some fixed $1 \leq l \leq {\bs p}_{\min}$, using the design points $\bs x_{(l, \ldots, l)} = (x_{l,1}, \ldots, x_{l,d})$ and $\bs x_{(l+1, \ldots, l+1)} = (x_{l+1,1}, \ldots, x_{l+1,d})$ we set 
	$$ \mu_1(\bs x) = \tilde L (c\, p)^{-\alpha}\, \prod_{r =1}^d g\big( 2 \,c\, p\, (x_r - (x_{l,r} + x_{l+1,r})/2)\big)\,,\quad \bs x = (x_1,  \ldots, x_d)^\top \in \R^d, $$
	where $c=\max_k \fkmax$ and where the function $g\colon \R \to \R$ is given by
	$$ g(x) =  \exp\big( - 1/( 1 - x^2)\big)  \, \ind_{\abs{x} < 1} \,, \quad x \in \R\,. $$
    Since by Assumption \ref{ass:designdensity}, all design points are at a distance of at least $1/(c p)$ with $c=\max_k \fkmax$, it follows that $\mu_1$ indeed vanishes at the design points. 

    Turning to Hölder-smoothness, the $\nu^{\mathrm{th}}$-derivative of $g$ is given by
	$$ g^{(\nu)} (x) = \frac{\varphi_{3(\nu-1) + 1}(x)}{(1 - x^2)^{2\,\nu}}\, g(x)\,,\quad x\in \R\,,$$
	where $\varphi_{3(\nu-1) + 1}(x)$ is a polynomial of degree $3(\nu-1) + 1$. Thus, all derivatives of $g$ are non-zero and uniformly bounded, and Hölder-continuous of all orders.  
    
	The $\bs k^{\mathrm{th}}$ partial derivative of $\mu$ is given by 
	$$ \partial^{\bs k} \mu(\bs x) = \tilde L (c\, p)^{-\alpha + \abs{\bs k}}\, 2^{\abs{\bs k}} \prod_{r = 1}^d g^{(k_r)}\big(2 \,c\, p\, (x_r - (x_{l,r} + x_{l+1,r})/2) \big)\,.$$
    Together with smoothness of the function $g$ and its derivatives this implies that $\mu_{1,\bs p} \in \mc H(\alpha, L)$, where $\tilde L$ in the definition of $\mu_{1, \bs p}$ can be adjusted to obtain a Hölder norm $ \leq L$.  
	Finally,  
	\begin{align*}
		\norm{\dels \mu_0 - \dels \mu_1}_\infty  = \norm{\dels \mu_1}_\infty & =  \tilde L (c\, p)^{-\alpha + \abs{\bs s}}\, 2^{\abs{\bs s}} \prod_{r = 1}^d \sup_{x \in T_{l,r}} \big| g^{(s_r)}(x) \big|\,\\
        & \geq c_1 \, p^{-\alpha + \abs{\bs s}}.
	\end{align*}
	for $c_1>0$, with $T_{l,r} = \big[(x_{l,r} + x_{l+1,r})/2 - 1/(2 c p), (x_{l,r} + x_{l+1,r})/2 + 1/(2 c p)\big]$. 
	\\

    	\textit{ii) Lower bound of order $(\log(n\bs{p^1})/(n\bs{p^1}))^{(\alpha - \abs{\bs s}) / (2\alpha + 1)}$.} This is derived in analogy to the case for estimating the mean function, by using the same hypothesis functions with the same bounds on the Kullback distance. In the notation of the argument, the distance between  the derivatives of the hypotheses functions is of order $h_{n,\bs p}^{\alpha - \abs{\bs s}}$, which gives the required rate.

    	\smallskip

	    \textit{Lower bound of order $n^{-1/2}$.} First assume that the errors $\epsilon_{i,\bs j}=0$. 
        Consider the hypothesis functions $\mu_0 = 0$ and $\mu_1 (\bs x) = \bs x^{\bs s}/\sqrt n$ for which $\norm{\mu_0^{(\bs s)} - \mu_1^{(\bs s)}}_\infty = \bs s!/\sqrt n$, and take $Z_i (\bs x) = N_i\, \bs x^{\bs s}$, where the $N_i \sim \mc N(0, 1)$ are independent standard normal random variables. The observations are then given by $Y_{i,\bs j}^{(1)} = \bs x_{\bs j}^{\bs s} \, (N_i + 1/\sqrt n)$ and $Y_{i,\bs j}^{(0)} = \bs x_{\bs j}^{\bs s}\,  N_i$, so that the observations in each row are of the forms $Y_{i,\cdot}^{(1)} = \bs a^\top  \, (N_i + 1/\sqrt n)$ and $Y_{i,\cdot}^{(1)} = \bs a^\top  \, N_i$ for a $\bs p^{\bs 1}$-dimensional vector $\bs a$.         
        Now the Kullback-divergence between $N_i$ and $N_i+ 1/\sqrt n$ is $(2\,n)^{-1}$ and the Kullback-divergence between the $n$ i.i.d.~repetitions therefore is equal to $1/2$. By Lemma \ref{lem:KLdivergence}, the Kullback-divergence between $Y_{i,\cdot}^{(0)}$ and  $Y_{i,\cdot}^{(1)}$ is equal to that between $N_i$ and $N_i+ 1/\sqrt n$, so that the Kullback-divergence between the sets of observations is equal to $1/2$ as well. Since the KL-divergence only decreases after adding the normally distributed observational noise  $\epsilon_{i,\bs j}=0$, the conclusion follows. 
        

  	\smallskip

    \textit{iii)} 
    By taking functions and processes which only depend on a single coordinate, and taking all $\abs{\bs s}$ partial derivatives in that direction we may reduce the analysis to the one-dimensional case $d=1$. Thus $s\in \N$ is a  scalar in what follows.  
    To define the hypotheses functions, consider the Riemann-Liouville fractional integral of order $\beta>0$ of a function $f$,
    \begin{equation}\label{eq:RLfI}
    I_{0+}^{\beta}f(t) = \frac{1}{\Gamma(\beta)}\, \int_0^t (t-s)^{\beta-1}\, f(s)\, \dx s, \qquad t \in [0,1].
    \end{equation}
    Moreover, consider the fractional derivative of order $\beta$: For $n \in \N_0$ let $D^n$ denote $n$th order ordinary derivative. Further let $m=[\beta]$ denote the integer part of $\beta$ (largest integer $\leq \beta$).  Then
    $$ D_{0+}^{\beta}f(t) := D^{[\beta] + 1} I_{0+}^{[\beta] + 1 - \beta} f(t)$$
    if it exists. The operators $D_{0+}^{\beta}$ and $I_{0+}^{\beta}$ are inverse operators, see  \citet[Lemma 2.2.5]{balachandran2023introduction}. 
    Also recall the Riemann-Liouville $\beta$-fractional Brownian motion in \eqref{eq:RLfB1}:
    \begin{equation}\label{eq:RLfB}
    R_t^\beta =  \int_0^t (t-s)^{\beta-1/2}\,  \dx W_s,
    \end{equation}
    where $(W_s)$ is a standard Brownian motion. 

    Now we derive the lower bound in the continuous model 
    $$Y^{j,n}(t) = \mu_{j,n}(t) + Z(t), \qquad j=0,1,$$ without errors. Since this is more informative than the discrete model with observational error, the conclusion also follows for the discrete model.
    Moreover, we can adjust for the Hölder constants $L$ and $C_Z$ by multiplying by a sufficiently small number, without effecting the rates. 

    As hypothesis functions we take $\mu_0=0$, and for $\mu_1 = \mu_{1,n}$ consider 
    $$f(t) = \exp\big(1/(t\, (1-t))\big) \, 1_{(0,1)}(t), \qquad g(t) = D_{0+}^{\beta_1 +1/2}\,f (t).$$
       Let $h=n^{-1/(2\,(\, \alpha - \beta_1))}$, and take 
      $$\mu_{1,h}(t) = h^{\alpha}\, f(t/h).$$ 
    Then as shown previously, $\mu_{1,h}$ is $\alpha$-Hölder continuous and moreover, the sup-norm distance between $\mu_0^{(s)}$ and $\mu_{1,h}^{(s)}$ 
    is of order $h^{\alpha - s} = n^{-\frac{\alpha - s}{2\,(\, \alpha - \beta_1)}}$, as required.  
    Further, since $f(t) = I_{0+}^{\beta_1 +1/2}g (t)$ we have 
    \begin{equation}\label{eq:sechypothfct}
        \mu_{1,h}(t) = I_{0+}^{\beta_1+1/2} \big(h^{\alpha - \beta_1-1/2}\, g(\cdot/h) \big) (t).
    \end{equation} 
    Next take $Z_i$ independent Riemann-Liouville $\beta_1$-fractional Brownian motions $R_t^{\beta_1}$.
    From \citet[Lemma 2]{schmidt2014asymptotic} the Kullback-Leibler divergence between between the distributions of $Y^{0}$ and $ Y^{1,n}$ is equal to half the squared distance $ \|\cdot \|^2_{\mathbb H}$ in the reproducing kernel Hilbert space associated with the Riemann-Liouville $\beta_1$-fractional Brownian motion, which is computed in \citet[Lemma 10.2]{van2008reproducing}. Therefore, it follows that for the Kullback-Leibler divergence, 
    \begin{align*}
        \KL(Y^{0}, Y^{1,n}) & = \frac12\, \|\mu_{1,h}\|^2_{\mathbb H}\\
        & = \frac12\,\frac{h^{2(\alpha - \beta_1)}\, h^{-1}}{\Gamma(\beta_1 + 1/2)^2}\, \int_0^1\, g^2(t/h)\, \dx t \tag{using \eqref{eq:sechypothfct}}\\
        & = \frac{\int_0^1\, g^2(t)\, \dx t}{\Gamma(\beta_1 + 1/2)^2}\, n^{-1}.
    \end{align*}
    as required for $n$ independent and identically distributed repetitions. 

    Finally, the Riemann-Liouville $\beta_1$-fractional Brownian motions $Z:= R_t^{\beta_1}$ is a Gaussian process. We need to show that it is $\beta$-Hölder smooth for $\beta < \beta_1$ with square-integrable Hölder norm. 
    To this end we apply \citet[Theorem 8, (1)]{da2023sample} together with \citet[Theorem 1]{azmoodeh2014necessary}. 
    The covariance of $R_t^{\beta_1}$ can be written as
    $$\gamma_{\beta_1}(s,t) = \expec[R_t^{\beta_1} R_s^{\beta_1}] = \int_0^{\min(s,t)}\, (s-u)^{\beta_1 - \frac12}\,(t-u)^{\beta_1 - \frac12}\, \dx u = \Gamma(\beta_1 + 1/2)\, I_{0+}^{\beta_1 + \frac12} f_{s, \beta_1}(t),$$
    where $f_{s, \beta_1}(u) = (u-s)_+^{\beta_1 - \frac12}$. Now using \citet[Theorem 2.2.3]{balachandran2023introduction} we obtain
    $$ \partial_t^{\floor{\beta_1}}\gamma_{\beta_1}(s,t) = \Gamma(\beta_1 + 1/2)\, I_{0+}^{\beta_1 - \floor{\beta_1} + \frac12} f_{s, \beta_1}(t) = \frac{\Gamma(\beta_1 + 1/2)^2}{\Gamma(\beta_1 - \floor{\beta_1} + 1/2)}\, I_{0+}^{\beta_1  + \frac12} f_{t, \beta_1- \floor{\beta_1}}(s), $$
    where the second equality follows from the definition of $f_{s, \beta_1}(u)$ and the fractional integral. Then
        \begin{align*}
         \partial_t^{\floor{\beta_1}}\partial_s^{\floor{\beta_1}}\gamma_{\beta_1}(s,t) & =  \frac{\Gamma(\beta_1  + 1/2)^2}{\Gamma(\beta_1 - \floor{\beta_1} + 1/2)}\, I_{0+}^{\beta_1 - \floor{\beta_1} + \frac12} f_{t, \beta_1- \floor{\beta_1}}(s)\\
         & = \frac{\Gamma(\beta_1  + 1/2)^2}{\Gamma(\beta_1 - \floor{\beta_1} + 1/2)^2}\,\int_0^{\min(s,t)}\, (s-u)^{\beta_1 - \floor{\beta_1} - \frac12}\,(t-u)^{\beta_1 - \floor{\beta_1} - \frac12}\, \dx u,
        \end{align*}
        which is proportional to the covariance function of the Riemann-Liouville $\beta_1- \floor{\beta_1}$-fractional Brownian motion. Now its variogram is bounded by $C\, |t-s|^{2\, \beta_1- \floor{\beta_1}}$, see \citet{ichiba2022path}, Theorem 3.1 and Remark 3.1, with $\gamma=0$ and $\alpha = \beta_1- \floor{\beta_1} - 1/2$.  Hölder continuity follows from \citet[Theorem 1]{azmoodeh2014necessary}.   
\end{proof}

\subsection{Proof of Theorem \ref{thm:asymptotic:normality}}

\begin{proof}[Proof of Theorem \ref{thm:asymptotic:normality}]

		From Theorem \ref{thm:mean:upper_bounds} together with the assumptions on $\bs p$ and $n$, for the choice $h= h_n  \in H_n$ of the smoothing parameters it follows that for the terms $I_{1}^{\bs p,h, \bs s}(\bs x)$  and $I_{2}^{n, \bs p,h, \bs s}(\bs x)$ in the error decomposition \eqref{eq:decomposition} we have that
        %
	\begin{align*}
		\sup_{\mu \in \hclass } \sqrt n \, \normb{I_{1}^{\bs p,h, \bs s}}_\infty \to  0 \quad \text{and} \quad \sqrt n \, \normb{I_{2}^{n, \bs p,h, \bs s}}_{\infty} \stackrel{\prob}{\to}  0 \,.
	\end{align*}
	We shall apply the functional central limit Theorem in \citet[Theorem (10.6)]{pollard1990empirical}	to the triangular array of processes $(X_{ni})_{n \in \N, 1 \leq i \leq n}$ given by \eqref{eq:X:process:deriv} 
	to obtain 
	\begin{align}\label{eq:clthelp}
		S_n : = \sum_{i=1}^n X_{ni} = \sqrt n \, I_{3}^{n, \bs p,h, \bs s}   \stackrel{D}{\longrightarrow} \ \mc G\big(\bs 0,\partial^{(\bs s, \bs s)^\top} \Gamma\big) \, .
	\end{align}
	With Slutsky's Theorem \citep[Example 1.4.7]{van1996weak} we then obtain the assertion of Theorem \ref{thm:asymptotic:normality}. 
    
    For \eqref{eq:clthelp} we have to check the conditions i) - v) of \citet[Theorem (10.6)]{pollard1990empirical}. 

First note that since $Z_1 \in \mc P(\beta, C_Z)$ with $\beta > \abs{\bs s}$, using the argument for Lemma \ref{lem:mean:estimation:rates:asbefore}, \textit{i)} we obtain
\begin{equation}\label{eq:taylorprocess}
    \sup_{\bs x \in [0,1]^d} \Big|\sum_{\bs j =\bs 1}^{\bs p} \wjsb xhs  Z_1(\bs{x_j})  - \dels Z_1(\bs x) \Big| \leq M_1 \, h^{\min(1,\beta - \abs{\bs s})}\, \sum_{|\bs k|= \abs{\bs s}}\frac{C_1}{\bs k !}
\end{equation}
    
	Concerning i) in Pollard's theorem:  in the proof of Lemma \ref{lem:mean:estimation:rates:new}, part \textit{i)}, \eqref{eq:bound:smooth:process} we have shown that the processes $X_{n}\s = (X_{n1},\ldots,X_{nn})$ satisfy the properties a) and b) of Lemma \ref{lem:maximalInequalityExpection} with respect to the envelope $\bs \Phi_{n} \defeq 2\, \Cenv\, (M_1, \ldots , M_n)^\top / \sqrt n$. 

	For ii) we calculate $\lim_{n\to \infty}\expec[S_n(\bs x)S_n(\bs x^\prime)]$.  Using \eqref{eq:taylorprocess} and 
$\sqrt n \abs{X_{n1}(\bs x)}  \leq 2\, \Csum\, M_1$ we obtain
	\begin{align*}
		\expec[S_n(\bs x)S_n(\bs x^\prime)] & = \expec\Big[\sum_{\bs j =\bs 1}^{\bs p} \wjsb xhs  Z_1(\bs{x_j}) \sum_{\bs k = \bs 1}^{\bs p}\wksb{x^\prime}hs Z_1(\bs{x_k})\Big] \\
		&  \stackrel{n \to \infty}{\longrightarrow} \expec\big[ \dels Z_1(\bs x)\,\dels Z_1(\bs x^\prime)\big]  = \partial^{(\bs s,\bs s)^\top}\Gamma(\bs x, \bs x^\prime)\,, 
	\end{align*}
	uniformly for all $\bs x, \bs x^\prime \in [0,1]^d$.  
	
	For iii) and iv): The coordinates of the envelope $\bs \Phi_{n}$ are given by $ 2\, \Cenv\, M_i / \sqrt n$. So this holds by assumption of square-integrability of the $M_i$.
	
	For iv) (Lindeberg-condition for the envelope): This is immediate since the coordinates of $\bs \Phi_{n} $ are i.i.d.. 
	
	For v) we calculate similarly as for ii)
	\begin{align*}
		\rho_n^2(\bs x, \bs x^\prime)  
		& = \expec\Bigg[\sum_{\bs j=\bs 1}^{\bs p} \wjsb xhs  Z_1(\bs{x_j}) \sum_{\bs k = \bs 1}^{\bs p}  \wksb xhs Z_1(\bs{x_k})\\
		& \Quad - 2\sum_{\bs j=\bs 1}^{\bs p} \wjsb {x^\prime}hs  Z_1(\bs{x_j}) \sum_{\bs k = \bs 1}^{\bs p}  \wksb xhs Z_1(\bs{x_k}) \\
		& \Quad + \phantom{2}\sum_{\bs j=\bs 1}^{\bs p} \wjsb {x^\prime}hs  Z_1(\bs{x_j}) \sum_{\bs k = \bs 1}^{\bs p}  \wksb {x^\prime}hs Z_1(\bs{x_k})\Bigg] \\
		&  \stackrel{n \to \infty}{\longrightarrow} \partial^{(\bs s, \bs s)^\top}\Gamma(\bs x,\bs x) -  2\,\partial^{(\bs s, \bs s)^\top}\Gamma(\bs x,\bs x^\prime) +\partial^{(\bs s, \bs s)^\top}\Gamma(\bs x^\prime, \bs x^\prime) \defeql \rho(\bs x, \bs x^\prime)\,
	\end{align*}
	uniformly for all $\bs x, \bs x^\prime \in [0,1]^d$ by \ref{ass:weights:pol:rep}. Now let $(\bs x_n, \bs y_n)_{n \in \N} \subset [0,1]^d$ be a deterministic sequence for which $\rho(\bs x_n, \bs y_n) \to 0, n\to \infty$. Then 
	\begin{align*}
		0 \leq \rho_n(\bs x_n, \bs y_n) & \leq \sup_{\bs x, \bs x^\prime \in [0,1]^d} \absb{ \rho_n(\bs x, \bs x^\prime) - \rho(\bs x, \bs x^\prime)}+ \rho(\bs x_n, \bs y_n) \stackrel{n \to \infty}\longrightarrow 0\,.
	\end{align*}
	\end{proof}

\subsection{A maximal inequality from \citet{pollard1990empirical}} \label{sec:maxinequa}

We recall from \citet[Section 7]{pollard1990empirical} the maximal inequality (7.10) which is used in the proof of Lemma \ref{lem:mean:estimation:rates:new} as well as in Pollard's CLT, \citet[theorem (10.6)]{pollard1990empirical}, on which our Theorem \ref{thm:asymptotic:normality} is based. 

Consider a triangular array $(X_{n,i})_{n \in \N, 1\leq i \leq k_n}$, 
$$ X_{n,i} \colon \Omega\times [0,1]^d \to \R$$ of stochastic processes with independent rows, assumed to be centered, $\expec\big[X_{n,i}(\bs x)\big]=0$, $\bs x \in [0,1]^d$, and set 
	\begin{equation*}
		S_n(\bs x) \defeq \sum_{i=1}^{k_n} X_{n,i}(\bs x),
	\end{equation*}
assuming the expected values exist. 
The sequence $\bs \Phi_n$ of $k_n$-dimensional random vectors is called an \textit{envelope} of $(X_{n,i})$ if $\bs \Phi_n(\omega)$ is an envelope of the set 
$$F_n(\omega)\defeq \big\{(X_{n,1}(\omega, \bs x) ,\ldots,X_{n,k_n}(\omega, \bs x))^\top \mid \bs x \in [0,1]^d\big\}$$
for $n  \in \N$ and $\omega \in \Omega$. Here, a vector $ \bs\Phi = (\Phi_1,\ldots, \Phi_n)^\top\in \R^n$ is an \textit{envelope} for the bounded subset $B \subset \R^n, n\in\N$ if $\abs{b_i} \leq \Phi_i$ for all $\bs b = (b_1,\ldots, b_n)^\top \in B$.

Let us state the maximal inequality \citet[Section 7, display (7.10)]{pollard1990empirical}. 
\begin{lemma}\label{lem:maximalInequalityExpection}
	For the triangular array $(X_{n,i})$ with envelope $(\Phi_n)$, assume that there exists a deterministic function $\lambda\colon[0,1] \to  \R$, the \textit{capacity bound}, for which
\begin{enumerate}
	\item[a)] $\int_0^1 \sqrt{\log ( \lambda(x))}\dx \varepsilon < \infty,$
	\item[b)] $ D(\varepsilon\norm{\bs \alpha \circ \Phi_n(\omega)}_2, \bs \alpha \circ F_n(\omega)) \leq \lambda(\varepsilon)$  for $\varepsilon \in [0,1]$ and all $\bs \alpha \in \R_+^n$, $n \in \N$ and all $\omega\in \Omega$
\end{enumerate}
where $\bs x \circ \bs y$ denotes the Hadamard (component-wise) product of two vectors of equal dimension, and  $D(\varepsilon\norm{\bs \alpha \circ \Phi_n(\omega)}_2, \bs \alpha \circ F_n(\omega))$ is the $\varepsilon\norm{\bs \alpha \circ \Phi_n(\omega)}_2$-packing number of the set $\alpha \circ F_n(\omega) $. 
    
Then for each $1\leq p <\infty$ there exists a constant $\tilde C_p>0 $ such that 
	\begin{equation}\label{eq:maxboundpollard}
		\expec\big[\norm{S_n }^p_\infty\big] \leq \tilde C_p \, \Big(\int_0^1 \sqrt{\log ( \lambda(\varepsilon))}\dx \varepsilon\Big)^p \expec\big[\norm{\bs \Phi_n}_2^p\big].
	\end{equation}
\end{lemma}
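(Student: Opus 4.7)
The plan is to combine a symmetrization step with a conditional chaining argument. First I would symmetrize: introducing i.i.d.~Rademacher signs $\sigma_1,\ldots,\sigma_{k_n}$ independent of the $X_{n,i}$ and defining $S_n^\circ(\bs x) = \sum_{i=1}^{k_n}\sigma_i X_{n,i}(\bs x)$, the standard symmetrization inequality for centered independent summands gives
\begin{equation*}
\expec\big[\norm{S_n}_\infty^p\big] \leq 2^p\, \expec\big[\norm{S_n^\circ}_\infty^p\big].
\end{equation*}
Working conditionally on the sample $\omega$, Hoeffding's lemma implies that the (conditional) increment $S_n^\circ(\bs x) - S_n^\circ(\bs y)$ is sub-Gaussian with parameter $\rho_n(\bs x, \bs y) = \norm{X_{n,\cdot}(\omega, \bs x) - X_{n,\cdot}(\omega, \bs y)}_2$, so that $S_n^\circ$ is a sub-Gaussian process on $[0,1]^d$ with respect to the random $\ell_2$-pseudometric $\rho_n$ on the set $F_n(\omega)$.

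Next I would apply Dudley's generic chaining bound for sub-Gaussian processes conditionally on $\omega$. The $\rho_n$-diameter of $F_n(\omega)$ is at most $2\norm{\bs\Phi_n(\omega)}_2$ because $\bs\Phi_n(\omega)$ is a coordinate-wise envelope for $F_n(\omega)$. The chaining bound in $L^p$-form (which follows from iterating the sub-Gaussian tail estimate along a dyadic chain and integrating, or equivalently from the Pisier-type moment version of Dudley's inequality) gives a constant $C_p$ such that
\begin{equation*}
\expec_\sigma\big[\norm{S_n^\circ}_\infty^p\big] \leq C_p \bigg(\int_0^{2\norm{\bs\Phi_n(\omega)}_2} \sqrt{\log D(u, F_n(\omega), \rho_n)}\,\dx u\bigg)^p,
\end{equation*}
where $D(\cdot,\cdot,\rho_n)$ denotes packing (equivalently, up to universal constants, covering) numbers.

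Now I apply hypothesis b) specialized to $\bs\alpha = \bs 1$ to relate the packing numbers to $\lambda$: $D(\varepsilon\norm{\bs\Phi_n(\omega)}_2, F_n(\omega), \rho_n) \leq \lambda(\varepsilon)$ for $\varepsilon\in[0,1]$. Changing variables $u = \varepsilon\norm{\bs\Phi_n(\omega)}_2$ in the entropy integral yields
\begin{equation*}
\int_0^{\norm{\bs\Phi_n(\omega)}_2} \sqrt{\log D(u, F_n(\omega), \rho_n)}\,\dx u \leq \norm{\bs\Phi_n(\omega)}_2 \int_0^1 \sqrt{\log \lambda(\varepsilon)}\,\dx\varepsilon,
\end{equation*}
and the piece on $[\norm{\bs\Phi_n(\omega)}_2, 2\norm{\bs\Phi_n(\omega)}_2]$ is handled the same way (packing numbers vanish beyond the diameter). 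Taking expectation in $\omega$ and using hypothesis a) to conclude finiteness of the entropy integral gives
\begin{equation*}
\expec\big[\norm{S_n}_\infty^p\big] \leq \tilde C_p \bigg(\int_0^1 \sqrt{\log \lambda(\varepsilon)}\,\dx\varepsilon\bigg)^p \, \expec\big[\norm{\bs\Phi_n}_2^p\big].
\end{equation*}

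The main obstacle is obtaining the $L^p$ form of the chaining bound with the correct dependence on $\norm{\bs\Phi_n}_2$ uniformly in $\omega$, rather than just an $L^1$ or in-probability statement; measurability of the packing numbers and of the supremum (so that outer expectations reduce to ordinary ones) also requires care, and is handled exactly as in \citet{pollard1990empirical} by the manageability assumption encoded in b).
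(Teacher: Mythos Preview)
The paper does not give its own proof of this lemma; it is stated purely as a citation of \citet[Section 7, (7.10)]{pollard1990empirical}. Your reconstruction via symmetrization followed by conditional sub-Gaussian chaining is exactly Pollard's argument, so the proposal is correct and matches the source the paper defers to. One small remark: as you implicitly noticed, your argument only uses hypothesis b) at $\bs\alpha=\bs 1$; the uniformity in $\bs\alpha$ is Pollard's \emph{manageability} condition, which is stronger than what is needed for \eqref{eq:maxboundpollard} alone but is the form required later for the functional CLT.
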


The requirements a) and b) of the lemma are satisfied for processes with Hölder-continuous paths, as made precise in the following lemma which is \citet[Lemma 3]{berger2023dense}.
\begin{lemma}\label{lem:manageability:lipschitz}
	Consider the triangular array $(X_{n,i})_{n\in \N, 1\leq i \leq k_n}$ of stochastic processes on $[0,1]^d$ with a suiting sequence of envelopes $(\bs \Phi_n)_{n \in \N}$. If there are constants $K, \kappa_1, \kappa_2 >0$ such that for all $\bs x,\bs x^\prime \in [0,1]^d, n\in\N $ and $\epsilon>0 $ it holds that
	\begin{equation}\label{eq:measucondprocess}
		\norm{\bs x - \bs x^\prime}_2 \leq \epsilon^{\kappa_1} \Rightarrow \big|X_{n,i}(\bs x) - X_{n,i}(\bs x^\prime)\big| \leq \epsilon^{\kappa_2} K \Phi_{n,i}, 
	\end{equation}
	where $i =1,\ldots, k_n$, and $\bs \Phi_n = (\Phi_{n,1},\ldots, \Phi_{n,k_n})^\top$, then setting $\kappa=\kappa_1/\kappa_2$ we have that
	\begin{equation*}
		D(\epsilon \norm{\bs \alpha \circ \bs \Phi_n}_2, \bs \alpha  \circ F_n) \leq \bigg(K^\kappa \, \epsilon^{-\kappa} + 2\bigg)^d =: \lambda_\kappa(\varepsilon).
	\end{equation*}
	Further it holds that 
	\begin{equation}\label{eq:theconstant}
		\Lambda_\kappa \defeq \int_0^1 \sqrt{\log(\lambda_\kappa(\varepsilon))}\, \dx \epsilon = \mathcal O\big( \sqrt \kappa\big).
	\end{equation}
\end{lemma}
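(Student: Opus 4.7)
The plan is to transfer the packing problem on $\bs\alpha\circ F_n(\omega) \subset \mathbb R^{k_n}$ to a packing problem on the parameter cube $[0,1]^d$, exploiting the fact that \eqref{eq:measucondprocess} expresses a Hölder continuity (with exponent $\kappa_2/\kappa_1$) of the map $\bs x \mapsto \bs\alpha\circ X_{n,\cdot}(\omega,\bs x)$ from $([0,1]^d,\norm{\cdot}_2)$ into $(\mathbb R^{k_n},\norm{\cdot}_2)$, whose Hölder constant is proportional to $K\,\normb{\bs\alpha\circ\bs\Phi_n}_2$. Once the question is reduced to the cube, a standard volumetric bound gives the packing number, and the entropy integral reduces to a Gaussian-type calculation.

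For the reduction, I would multiply \eqref{eq:measucondprocess} by $\alpha_i$ and combine the coordinates through the Euclidean norm to obtain
\begin{equation*}
\normb{\bs\alpha\circ X_{n,\cdot}(\bs x) - \bs\alpha\circ X_{n,\cdot}(\bs x')}_2 \leq \varepsilon^{\kappa_2} K \normb{\bs\alpha\circ\bs\Phi_n}_2 \quad\text{whenever}\quad \norm{\bs x-\bs x'}_2\leq\varepsilon^{\kappa_1}.
\end{equation*}
Substituting $\varepsilon=(\epsilon/K)^{1/\kappa_2}$ and applying the contrapositive, any two centres $\bs x_l,\bs x_m$ of an $\epsilon\normb{\bs\alpha\circ\bs\Phi_n}_2$-packing of $\bs\alpha\circ F_n(\omega)$ must be more than $(\epsilon/K)^{\kappa_1/\kappa_2}=(\epsilon/K)^\kappa$ apart in $\norm{\cdot}_2$. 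Thus $D(\epsilon\normb{\bs\alpha\circ\bs\Phi_n}_2,\bs\alpha\circ F_n(\omega))$ is bounded by the $(\epsilon/K)^\kappa$-packing number of $[0,1]^d$ in the Euclidean norm, which a standard sub-cube or volumetric argument (absorbing the dimensional factor from $\norm{\cdot}_\infty\leq\norm{\cdot}_2\leq\sqrt d\norm{\cdot}_\infty$ into $K$) bounds by $(K^\kappa\epsilon^{-\kappa}+2)^d$.

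For the capacity integral, one has $\log\lambda_\kappa(\varepsilon)=d\log(K^\kappa\varepsilon^{-\kappa}+2)$. Splitting the range of integration at the threshold $\varepsilon_0$ where $K^\kappa\varepsilon_0^{-\kappa}=2$, on $[\varepsilon_0,1]$ the logarithm is bounded by a constant depending only on $K$, and on $(0,\varepsilon_0]$ the bound $\log(a+b)\leq\log(2\max(a,b))$ gives $\log\lambda_\kappa(\varepsilon)\leq d\kappa\log(1/\varepsilon)+C$ for some $C=C(K,d)$. Using $\sqrt{a+b}\leq\sqrt a+\sqrt b$ then yields
\begin{equation*}
\Lambda_\kappa \leq \sqrt{C} + \sqrt{d\kappa}\int_0^1\sqrt{\log(1/\varepsilon)}\,\dx\varepsilon = \mathcal O(\sqrt\kappa),
\end{equation*}
the last integral being a finite Gaussian-type constant. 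The only real obstacle is bookkeeping dimensional constants in order to reach the exact form $(K^\kappa\epsilon^{-\kappa}+2)^d$ stated for $\lambda_\kappa$; the qualitative dependence on $\kappa$ in both parts of the lemma is immediate from the Hölder reduction.
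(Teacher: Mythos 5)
Your argument is correct and is essentially the standard (and intended) proof: the paper does not reprove this lemma but imports it from \citet[Lemma 3]{berger2023dense}, and the reduction you describe --- transferring the $\epsilon\norm{\bs \alpha \circ \bs \Phi_n}_2$-packing of $\bs\alpha\circ F_n$ to a $(\epsilon/K)^{\kappa}$-packing of $[0,1]^d$ via the contrapositive of \eqref{eq:measucondprocess}, then bounding $\int_0^1\sqrt{\log(1/\varepsilon)}\,\dx\varepsilon$ --- is exactly that route. The only loose end is the one you flag yourself: the volumetric packing bound for $[0,1]^d$ under $\norm{\cdot}_2$ produces a dimensional factor $\sqrt d$ that must be absorbed into $K$ to reach the literal form $(K^\kappa\epsilon^{-\kappa}+2)^d$, and (when $K>1$) the additive constant in your split of the entropy integral actually carries a $\kappa\log K$ term, but both are harmless for the $\mathcal O(\sqrt\kappa)$ conclusion.
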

%

\section*{Acknowledgements}

HH gratefully acknowledges financial support from the DFG, grant HO 3260/9-1.

\bibliographystyle{Chicago}
\bibliography{Literature_fda}

\appendix


\section{Further proofs}\label{app:additional_proofs}

\begin{proof}[Proof of Lemma \ref{lem:mean:estimation:rates:asbefore}, \textit{ii)}]

	We apply Dudley's entropy bound \citep[Corollary 2.2.8]{van1996weak}. To this end, note that by Assumption \ref{ass:model}, $\bar\e_{\bs 1},\ldots,\bar \e_{\bs p}$ are independent, centered, sub-Gaussian random variables with parameters bounded by $\zeta^2\sigma^2/n>0$ and $\E[\bar \e_{\bs j, n}^2]=\sigma_{\bs j}^2/n$, $\bs j=\bs 1,\dotsc,\bs p$. 
	Therefore, the process
	$$S_{n,\bs p, h}(\bs x) = \sqrt{n\bs{p^1}h^{d-2\abs{\bs s}}}\,{I_2^{n,\bs p, h}}(\bs x)$$ 
	is a sub-Gaussian process w.r.t.~the semimetric 
	\begin{align}
		\dd_{S}^2(\bs x, \bs y) & \defeq  \zeta^2\, \sigma^2\, \bs{p^1}h^{d-2\abs{\bs s}} \sum_{\bs j = \bs 1}^{\bs p}\big( \wjb{x}{h} -\wjb{y}{h} \big)^2  \nonumber \\
		&\leq M^2 \bigg(\frac{\norm{\bs x - \bs y}_\infty}{h}\wedge 1\bigg)^2
		\leq M^2\,, \label{eq:metric:Stilde}
	\end{align}
	where $M = \sqrt{2\Ccard} \Clip \, \zeta\, \sigma$, and we used the Lipschitz continuity of the weights, \ref{ass:weights:lipschitz}.  Now, given $0 < \delta < M$ to bound the covering number $	N\big( [0,1]^d,\delta;\dd_S \big)$ and hence the packing number $	D\big( [0,1]^d,\delta;\dd_S \big)$ of $[0,1]^d$ w.r.t.~$\dd_S$, note that from \eqref{eq:metric:Stilde}, 
	\begin{align*}
		\dd_S(\bs x, \bs \tau) \leq \delta \qquad \Longrightarrow  \qquad \norm{\bs \tau_{j}-\bs x}_\infty\leq \frac{\delta h}M, \qquad \bs x, \bs \tau \in [0,1]^d.
	\end{align*}
	We obtain
	\begin{equation}
		D\big( [0,1]^d,\delta;\dd_S \big) \leq N\big( [0,1]^d,\delta/2;\dd_S \big) \leq  \bigg(\frac {2M}{\delta {h}} \bigg)^d . \label{eq:bound:packing}
	\end{equation}
	By observing from \eqref{eq:metric:Stilde} that the diameter of $[0,1]^d$ under $\dd_S$ is upper bounded by $2 M$, Dudley's entropy bound implies that 
	\begin{align} \label{eq:integral:packing}
		\E\Big[\sup_{\bs x \in [0,1]^d} \big|S_{n,\bs p, h}(\bs x)\big| \Big] \leq \E\Big[ \big| S_{n,\bs p, h}(\bs x_0) \big| \Big]+ C \int_0^{2 M} \sqrt{\log \Big( D\big( [0,1]^d,\delta;\dd_S \big) \Big)} \,\dx \delta
	\end{align}
	for fixed  $\bs x_0 \in [0,1]^d$ and a universal constant $C>0$. 	Using \eqref{eq:bound:packing}, the integral in the second term is bounded by 
	\begin{align*}
		 \int_0^{2M}\sqrt{\log\big((2M / (\delta h)^d) \big)} \dx \delta
		 &= \frac{2 M\,d}{h} \int_0^{h} \sqrt{\log(\delta^{-1})} \,\dx \delta \\
		 &\leq  2\,  M\,d \bigg(\sqrt{-\log (h)}-\frac{1}{2\sqrt{-\log (h)}}\bigg) 
		=\Oop{ \sqrt{-\log (h)} } \,.
	\end{align*} 
	For the first term in \eqref{eq:integral:packing}, apply Jensen's inequality to obtain
	\begin{align*}
		\E\Big[ \big| S_{\bs p}(\bs x_0) \big| \Big]^2 &\leq \E\big[\widetilde{S}_{\bs p}^2(\bs x_0)\big] \leq \sigma^2 \bs{p^1}h^{d-2\abs{\bs s}} \sum_{\bs j = \bs 1}^{\bs p} \wjsb{x_0}{h}s^2 \leq \sigma^2 \Csum \Cmax < \infty
	\end{align*}
	by 
	\ref{ass:weights:max} and \ref{ass:weights:sum}. This concludes the proof.  	 	
\end{proof}

\begin{proof}[Proof of Theorem \ref{thm:lower:bound:mean:derivatives}, \textit{ii)}, first bound]
        Similarly as for the mean function itself we set $Z = 0\;(\in \Pclass)$ to adapt the argument in \citet[Theorem 2.10]{tsybakov2008introduction} and for the distribution for the errors we can assume $\mathcal N(0, \sigma_0^2/n)$.  
	For sufficiently small constants $c_0, c_1>0$ to be specified let 
	\begin{align*}
		N_{n,p} = \Bigg\lceil c_0 \bigg(\frac{n\,\bs{p^1}}{\log(n\,\bs{p^1})}\bigg)^{\frac{1}{2\alpha + d}}\Bigg\rceil, \quad h_{n,\bs p} = N_{n,\bs p}^{-1}, \quad s_{n,\bs p} = c_1\,h_{n,\bs p}^{\alpha - \abs{\bs s}}\,.
	\end{align*}
	We use similar hypothesis functions as in the previous case. Let
	\begin{align*}
		\tilde g(x) & = \exp\bigg(-\frac1{1-x^2}\bigg) \ind_{\abs{x} < 1}\,, \quad x\in \R\,,
	\end{align*}
	and for $\bs l  = (l_1, \ldots, l_d)^\top \in \{1,\ldots,N_{n,p}\}^d$ set the hypothesis functions as
	\begin{align*}
		\mu_{\bs l}(x) & = \tilde L \, h_{n,p}^\alpha \prod_{r = 1}^d \tilde g\bigg( \frac{2(x_r - z_{l_r})}{h_{n,\bs p}}\bigg) \quad \text{with} \quad  z_{l_r}  = \frac{l_r - 1/2}{N_{n,\bs p}}\,.
	\end{align*}
	In the following we let $\tilde x_{r, l_r} = 2(x_r - z_{l_r})/h_{n,\bs p}$. With the chain rule the derivatives of $\mu_{\bs l}$ are of a similar form as in the previous part and given by
	\begin{align*}
		\dels \mu_{\bs l}(\bs x) & = \tilde L\,h_{n, \bs p }^{\alpha} \prod_{r = 1}^d \tilde g^{(s_r)} (\tilde x_{r, l_r}) \\
		& =  2^{-\abs{\bs s}}\,\tilde L\, h_{n,\bs p}^{\alpha - \abs{\bs s}} \prod_{r = 1}^d \frac{\varphi_{3(s_r - 1)+1}(\tilde x_{r, l_r})}{(1-\tilde x_{r, l_r}^2)^2} \tilde g(\tilde x_{r, l_r})\,,
	\end{align*}
	where again $\varphi_{3(s_r-1)+1}(x)$ are polynomials of degree $3(s_r-1) + 1$ and not equal to zero. The hypothesis functions as well as their derivatives have disjoint supports and we obtain $\norm{\dels \mu_l - \dels \mu_r}_\infty = \norm{\dels \mu_l}_\infty \geq \text{const.}\,s_{n,p}$. In order to obtain $s_{n,p}$ as a lower bound via Proposition 2.3 in \cite{tsybakov2008introduction} we need to show that for a $\kappa \in (0, 1/8)$ and $p$ and $n$ being large enough it holds 
	\begin{align*}
		\frac{1}{N_{n,\bs p}^d} \sum_{\bs l} \mathrm{KL}(P_{\mu_{\bs l}}, P_0) \leq d\, \kappa \,\log\big(N_{n,\bs p}\big)\,,
	\end{align*}
	where $P_{\mu_l}$ is the joint normal distribution being generated with the mean function $\mu_l$. This is done by
	\begin{align*}
		\frac{1}{N_{n,\bs p}^d} \sum_{\bs l}  \KL(\prob_{\mu_{\bs l}},\prob_0) 
		&= \frac{1}{N_{n,\bs p}^d}\, \frac{n}{2 \sigma_0^2}\,\sum_{\bs l}\, \sum_{\bs 1\leq \bs j \leq \bs p}\,  \mu_{\bs l}^2(\bs{x_j}) \tag{normal distr.}\\
		&\leq \frac{n}{2 \sigma_0^2}\, \frac{\tilde L^2}{e^{2\,d}}\, h_{n,\bs p}^{2\alpha+d} \sum_{\bs 1\leq \bs j \leq \bs p}\,\sum_{\bs l}\,\prod_{r = 1}^d\ind_{\abs{2(x_{j_r} - z_{l_r})} <h_{n,\bs p} } \tag{Def. $\mu_{\bs l}$}\\
		& \leq \frac{n}{2 \sigma_0^2}\, \frac{\tilde L^2}{e^{2\,d}}\, h_{n,\bs p}^{2\alpha+d}  \,\bs{p^1}\, \tag{disjoint supp.} \\
		& \leq \text{const.}\,  \log(n\, \bs p^{\bs 1}). \tag{choice of $h_{n,\bs p}$}
	\end{align*}

    

	%
\end{proof}

\begin{proof}[Proof of Lemma \ref{lemma:locpol:weights}]
    Analogously to \citep[Formula (31)]{berger2023dense} we can write 
    	\begin{equation*}
		{\widehat{\mu}_{n,\bs p,h}^{\,(\bs s)}}(\bs x) =  \sum_{\bs j=\bs 1}^{\bs p} \wjsb xhs \, \oY{\bs j}
	\end{equation*}
    as a linear estimator with
	\begin{align} \label{eq:weights:locpol}
		\wjsb xhs \defeq \frac1{\bs p^{\bs 1}h^{d +\abs{\bs s} }} \dels U_m(\bs 0)^\top \, B_{\bs p,h}^{-1}(\bs x) \, U_{m,h}(\bs{x_j}-\bs x)\,  K_h(\bs x_{\bs j}-\bs x)\,,
	\end{align}
	and 
    	\begin{equation}
		B_{\bs p,h}(x) \defeq \frac1{\bs{p^1}h^d}\sum_{\bs j=\bs 1}^{\bs p} U_{m,h}(\bs{x_j}-\bs x)  \, U_{m,h}^\top(\bs x_{\bs j}-\bs x) \, K_{h}(\bs x_{\bs j}-\bs x)  \quad \in\R^{N_{m,d}\times N_{m,d}}\,.\label{eq:Bp}
	\end{equation}
    Now by \citep[Lemma 8]{berger2023dense} there exist a sufficiently large $c\in \R_+$, $p_0 \in \N$ and a sufficiently small $h_0 \in \R_+$ such that for all $\bs p$ with $\bs p_{\min} \geq p_0$ and $h \in (c/{\bs p}_{\min}, h_0]$ the smallest eigenvalues $\lambda(B_{\bs p, h}(x))$ of the matrix $B_{\bs p, h}(x)$ are bounded from below by a universal constant $\lambda_0$ for any $\bs x\in [0, 1]^d$. This implies the invertibility of the matrix $B_{\bs p, h}(x)$. As a consequence the local polynomial estimator in \eqref{eq:lp:mu:s} with any order $m \in \N$ is unique and a linear estimator with weights given in \eqref{eq:weights:locpol}. 
    We show that the local polynomial estimator defined in \eqref{eq:locpol} fulfills \ref{ass:weights:pol:rep}, meaning that estimator reproduces the $\bs s^{\mathrm{th}}$-derivative of polynomials up to degree $\alpha$. The remaining Assumptions follow analogously to \citep[Lemma 1]{berger2023dense} by multiplying he weights with $h^{-\abs{\bs s}}$ and using $\norm{\dels U_m(\bs 0)}_2 = \norm{U_m(\bs 0)}_2 = 1$.  \\
    
    Ad \ref{ass:weights:pol:rep}. Using the Taylor expansion for polynomial $Q$ of order $r$ with $r = \floor{\alpha}$
	\begin{align*}
		Q(\bs{x_j}) & = \sum_{ \abs{\bs k} = 0}^{r} \partial^{\bs k}Q(\bs x) \frac{(\bs{x_j}- \bs x)^{\bs k}}{\bs k!} \\
		& = \big(\underbrace{Q(\bs x), \partial^{\bs \psi(1)} Q(\bs x)\, h^{\abs{\bs \psi(1)}}, \ldots, \partial^{\bs \psi(N_{r, d-1})} Q(\bs x)\,h^{\abs{\bs \psi(N_{r, d-1})}}}_{\defeql q(\bs x)^\top}\big)\, U_{r, h}(\bs{x_j} - \bs{x})\,.
	\end{align*}
	Now plugging $Q(\bs{x_j})$ for $\bar Y_{\bs j}$ into the estimator gives 
	\begin{align*}
		\hat\nu_{n,p,h}(\bs x;Q(\bs x_j)) & = \argmin_{\bs \nu \in \R^{N_{r, d}}} \sum_{ \bs 1 \leq \bs j \leq \bs p} \big( Q(\bs{x_j}) - \bs \nu^\top U_{r, h}(\bs{x_j} - \bs{x}) \big)^2 K_h(\bs{x_j} - \bs{x}) \\
		& =\argmin_{\bs \nu \in \R^{N_{r, d}}} \sum_{ \bs 1 \leq \bs j \leq \bs p} \big(( q(\bs x) - \bs \nu )^\top U_{r, h}(\bs{x_j} - \bs{x})\big)^2 K_h(\bs{x_j} - \bs{x}) = q(\bs x)\,.
	\end{align*}
	Since 
	$\hat \nu_{n,p,h}(\bs x;Q(\bs x_j))_{\psi^{-1}(\bs s)} = \dels Q(\bs x) h^{\abs{\bs s}}$
	it is 
	$$\hat \mu^{(\bs s)}_{n,\bs p,h}(\bs x; h; Q(\bs{x_j})) = h^{-\abs{\bs s}}\,\hat \nu_{n,p,h}(\bs x;Q(\bs x_j))_{\psi^{-1}(\bs s)} = \dels Q(\bs{x_j})$$
	and estimator reproduces the desired derivative.
	Therefore the linear representation of the estimator gives
	\begin{align*}
		\sum_{\bs 1 \leq \bs j \leq \bs p} Q(\bs{x_j}) \wjsb xhs = \dels Q (\bs x)\,.
	\end{align*}
	For the equivalent representation in \eqref{eq:weights:rep:pol:analytic} set $Q = (\bs{x_j} - \bs \cdot)^{\bs r}$ with $\dels Q(\bs x) = \bs r \cdots (\bs r - \bs s) (\bs{x_j} - \bs x)^{\bs r - \bs s} $. Plugging this into the last equation shows that for $\bs s = \bs r$ the right hand side has value $\bs s!$. If there is an $s_i > r_i$ for any $i = 1, \ldots, d,$ then the right hand side is equal to zero. If $s_i < r_i$ for any $i = 1,\ldots, d,$ then the left hand side is zero. Since both sides are equal we have shown $\Rightarrow$. Since $(\bs{x_j} - \bs x)^{\bs r}, r \leq \zeta$ is a basis of those polynomials the other direction follows.
\end{proof}

\begin{lemma}\label{lem:KLdivergence}
    Let $(\Omega_i, \mc F_i)$, $i=1,2$ be measurable spaces, let $T: \Omega_1 \to \Omega_2$ be an injective, measurable map for which $\Tinv$ defined by $\Tinv(y) = T^{-1}(y)$, $y \in T(\Omega_1)$ and $\Tinv(y) = x_0$, $y \not \in T(\Omega_1)$ for some fixed $x_0 \in \Omega_1$ is also measurable. Let $P,Q$ be probability measures on $(\Omega_1, \mc F_1)$ for which $P \ll Q$, and denote by $P_T = P \circ T^{-1}$ and $Q_T = Q \circ T^{-1}$ the image measures on $(\Omega_2, \mc F_2)$ under $T$. Then for the Kullback-Leibler divergence, 
    $$\mathrm{KL} (P  \mid Q) = \mathrm{KL} (P_T  \mid Q_T) .$$
\end{lemma}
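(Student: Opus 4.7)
The plan is to use the classical invariance of KL divergence under injective measurable reductions: the Radon--Nikodym derivative transforms by pull-back through $T$, and then the defining integral of $\KL(P_T\mid Q_T)$ collapses onto the defining integral of $\KL(P\mid Q)$ via a push-forward change of variables.

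First I would verify $P_T \ll Q_T$. For $B \in \mc F_2$ with $Q_T(B) = Q(T^{-1}(B)) = 0$, the hypothesis $P \ll Q$ gives $P(T^{-1}(B)) = P_T(B) = 0$. Setting $f := dP/dQ$, I then claim
\begin{align*}
\frac{dP_T}{dQ_T} = f \circ \Tinv \qquad Q_T\text{-a.e.},
\end{align*}
where $f \circ \Tinv$ is $\mc F_2$-measurable thanks to the hypothesized measurability of $\Tinv$. To check the claim, for any $B \in \mc F_2$ I compute
\begin{align*}
P_T(B) = \int 1_{T^{-1}(B)}\, f\,dQ = \int 1_B(T(x))\,(f\circ \Tinv)(T(x))\,dQ(x) = \int_B (f\circ \Tinv)\,dQ_T,
\end{align*}
where the middle equality uses $\Tinv \circ T = \mathrm{id}_{\Omega_1}$ (a consequence of injectivity of $T$), and the last equality is the standard push-forward identity $\int h\circ T\,dQ = \int h\,dQ_T$ applied to $h = 1_B\cdot(f\circ \Tinv)$.

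Having identified the density, I finish with a second application of the same push-forward identity, this time for $P$ in place of $Q$:
\begin{align*}
\KL(P_T \mid Q_T) = \int \log(f\circ \Tinv)\,dP_T = \int \log\bigl((f\circ \Tinv)\circ T\bigr)\,dP = \int \log f\,dP = \KL(P \mid Q),
\end{align*}
using $\Tinv \circ T = \mathrm{id}_{\Omega_1}$ once more.

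The main obstacle is essentially bookkeeping rather than a substantive difficulty. The two delicate points are (i) measurability of $f\circ\Tinv$, which is precisely what the assumption on $\Tinv$ delivers, and (ii) the fact that $T(\Omega_1)$ need not lie in $\mc F_2$. The latter is harmless because the artificial value $x_0$ assigned to $\Tinv$ off $T(\Omega_1)$ is never probed: for any measurable $B \subseteq \Omega_2$ contained in the complement of $T(\Omega_1)$ one has $T^{-1}(B) = \emptyset$, hence $Q_T(B) = P_T(B) = 0$, so $P_T$ and $Q_T$ concentrate on the relevant part of $\Omega_2$ and all intermediate integrands are well defined.
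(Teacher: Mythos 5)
Your proof is correct and follows essentially the same route as the paper's: identify the density of the push-forward as the pull-back of the original density via $\Tinv$ (using $\Tinv \circ T = \mathrm{id}_{\Omega_1}$), then conclude by the change-of-variables identity for image measures. The only cosmetic difference is that you work directly with $f = \dd P/\dd Q$, whereas the paper routes through a common $\sigma$-finite dominating measure $\mu$ and its push-forward $\mu_T$; both are equally valid.
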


\begin{proof}
For a $\sigma$-finite measure $\mu$ let $p = \dx P / \dx \mu$ and $q = \dx Q / \dx \mu$ denote the Radon-Nikodým derivatives. 
Let $\tilde p \defeq p \circ \Tinv $. Since $\Tinv \circ T = id_{\Omega_1}$ we have that
\begin{align*}
 P_T(A) & = \int_{\Omega_1} 1_A \circ T\ p \, \dx \mu = \int_{\Omega_1} 1_A \circ T\ \tilde p \circ T \, \dx \mu\\
    & = \int_{\Omega_2} 1_A \, \tilde  p \, \dx \mu_T. 
\end{align*}
Therefore $\tilde p = \dx P_T / \dx \mu_T$ and similarly $\tilde q : = q \circ \Tinv = \dx Q_T / \dx \mu_T$. Then
\begin{align*}
    \mathrm{KL} (P_T  \mid Q_T) & = \int_{\tilde p >0}\, \Big(\log\big(\tilde p(y) / \tilde q(y) \big)\Big)\, \tilde p(y)\, \dx \mu_T(y)\\
    & = \int_{\tilde p\circ T >0}\, \Big(\log\big(\tilde p\circ T (x) / \tilde q\circ T (x) \big) \Big)\, \tilde p\circ T (x)\, \dx \mu(x)\\
    & = \int_{p >0}\, \Big(\log\big( p (x) / q (x) \big) \Big)\, p(x)\, \dx \mu(x) = \mathrm{KL} (P  \mid Q) 
\end{align*}
\end{proof}

\section{Additional numerical illustrations}\label{sec:simfullint}

Here we present the results for the Figure \ref{fig:deriv_bw_comp} on the whole of $[0,1]$. In Figure \ref{fig:deriv_bw_comp_full_interval} the error variance is $0.5$ and in Figure \ref{fig:deriv_bw_comp_full_interval_small_errors} the error variance is $0.1$.

\begin{figure}[h!]
	\begin{subfigure}[b]{0.49\linewidth}
		\includegraphics[width=\linewidth]{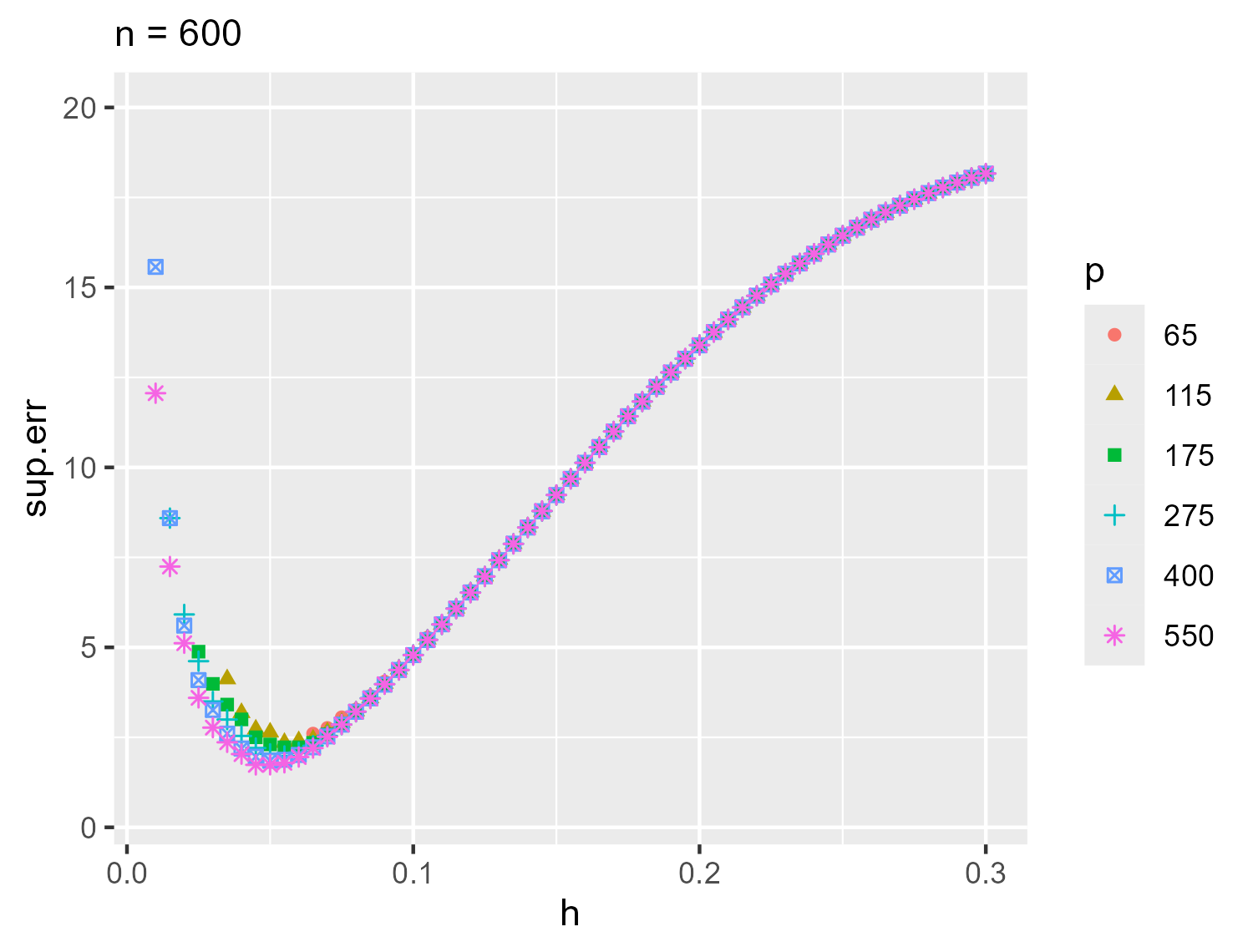}
		\caption{Local quadratic estimator.}
		\label{fig:deriv_bw_comp_quad_full_interval}
	\end{subfigure}
	\hfill
	\begin{subfigure}[b]{0.49\linewidth}
		\includegraphics[width=\linewidth]{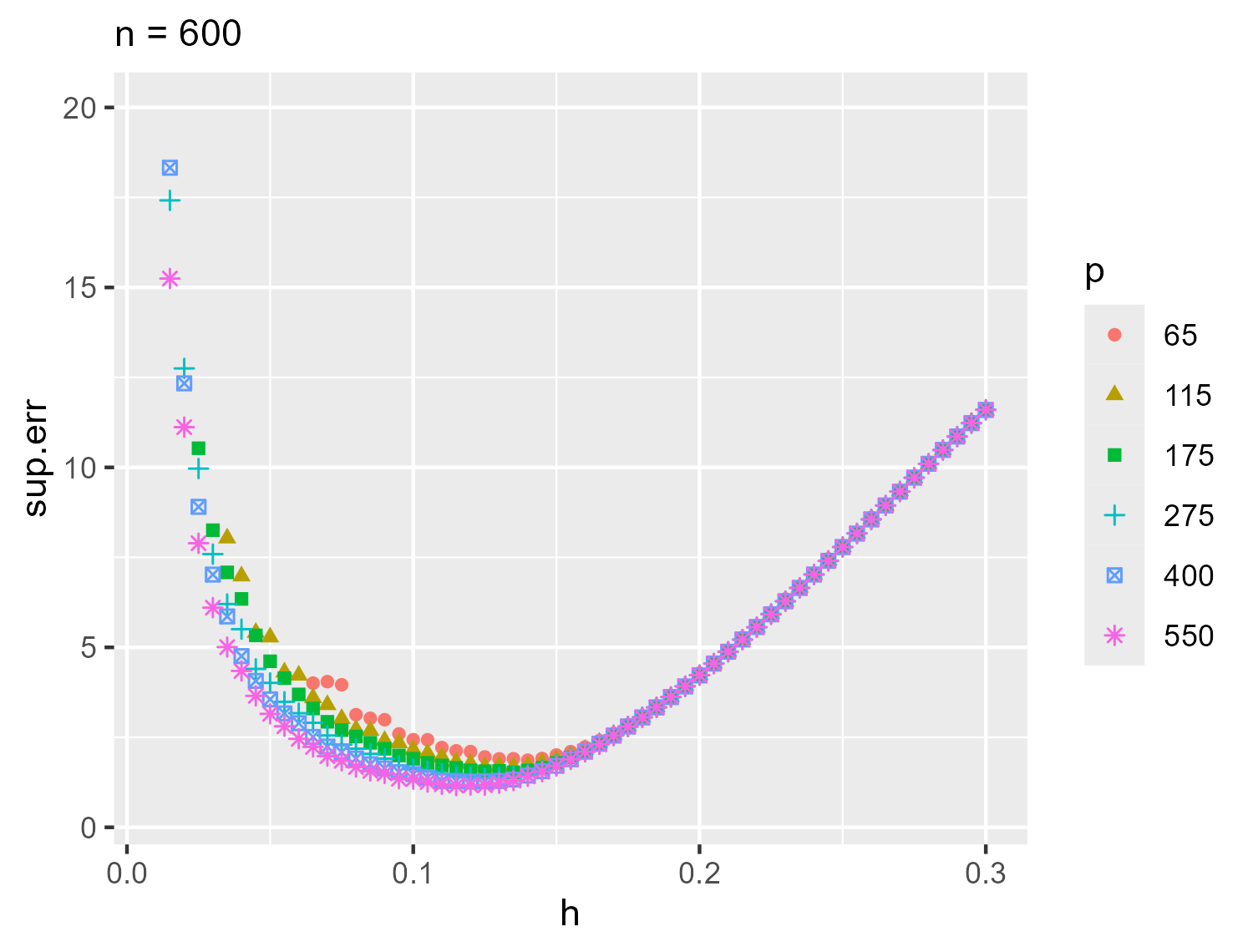}
		\caption{Local cubic estimator.}
		\label{fig:deriv_bw_comp_cubic_full_interval}
	\end{subfigure}%
	\caption{Bandwidth comparison as in Figure \ref{fig:deriv_bw_comp} for the local quadratic (a) and cubic (b) estimator for different $p$ and $n = 600$ with the sup-error taken on the whole of $[0,1]$. Error variance $\sigma = 0.5$.}
	\label{fig:deriv_bw_comp_full_interval}
\end{figure}

\begin{figure}[h!]
	\begin{subfigure}{0.49\linewidth}
		\includegraphics[width=\linewidth]{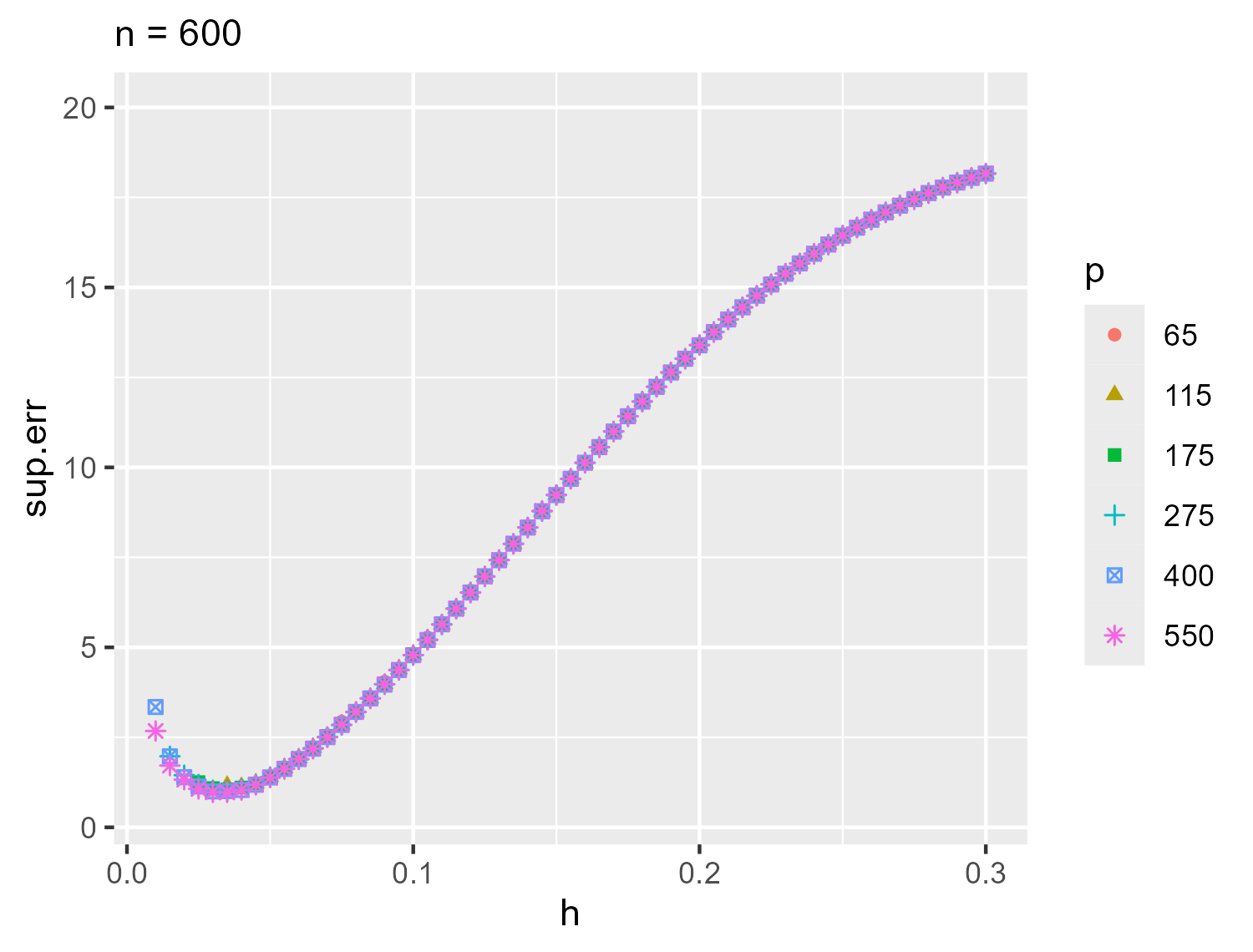}
		\caption{Local quadratic estimator.}
		\label{fig:deriv_bw_comp_quad_full_interval_small_errors}
	\end{subfigure}
	\hfill
	\begin{subfigure}{0.49\linewidth}
		\includegraphics[width=\linewidth]{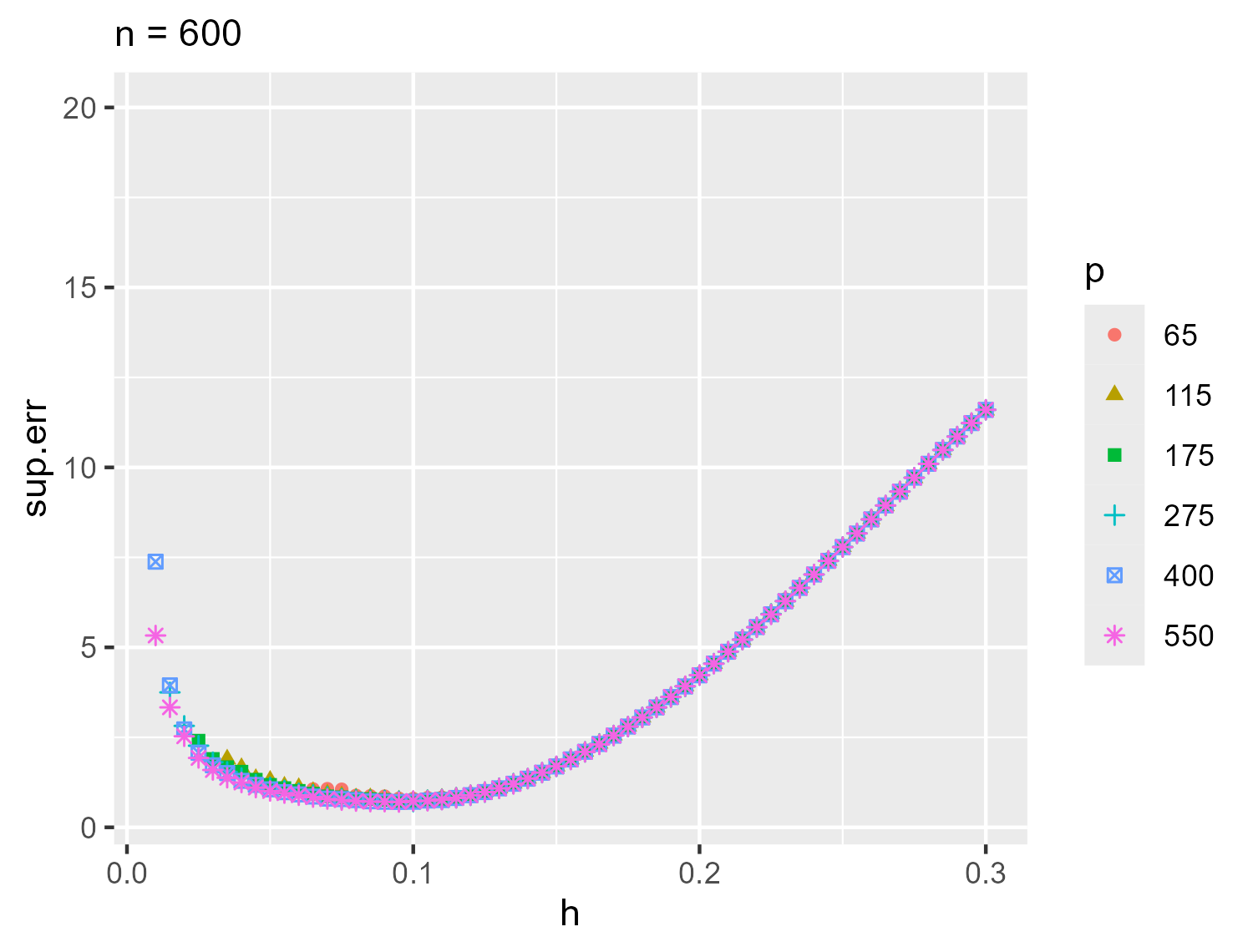}
		\caption{Local cubic estimator.}
		\label{fig:deriv_bw_comp_cubic_full_interval_small_errors}
	\end{subfigure}%
	\caption{Same bandwidth comparison as in Figure \ref{fig:deriv_bw_comp_full_interval} but with a smaller error variance $\sigma = 0.1$.}
	\label{fig:deriv_bw_comp_full_interval_small_errors}
\end{figure}

\begin{figure}
    \centering
    \includegraphics[width=\linewidth]{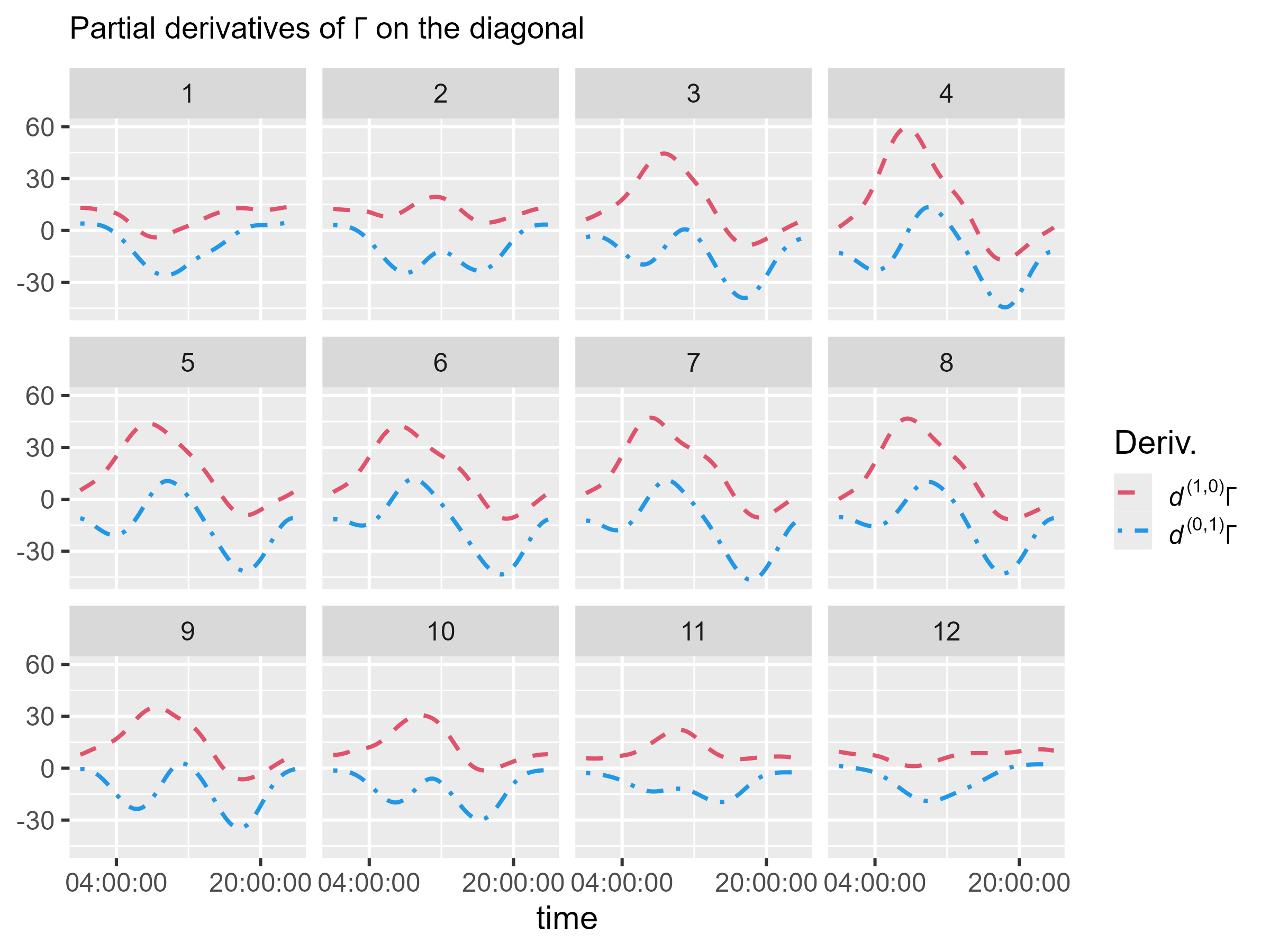}
    \caption{Estimates $ \partial^{(0,1)}_{u}\Gamma(x,x)$ and $\partial^{(1,0)}_{u}\Gamma(x,x)$ of the covariance kernel $\Gamma$ for temperatures in all months.}
    \label{fig:weather_cov_diag_all_months}
\end{figure}

\end{document}